\newtheorem{thm}{Theorem}[section]
\newtheorem{lm}[thm]{Lemma}
\newtheorem{pr}[thm]{Proposition}
\theoremstyle{definition}
\newtheorem{df}[thm]{Definition}
\newtheorem{exm}[thm]{Example}
\newtheorem{rem}[thm]{Remark}
\newtheorem{que}{Question}
\numberwithin{equation}{section}
\DeclareMathOperator{\Ker}{Ker}
\DeclareMathOperator{\LinC}{Lin_{\mathbb C}}
\newenvironment{mycompactenum}{\pltopsep=5pt\begin{compactenum}[\upshape (i)]}%
{\end{compactenum}}
\renewcommand{\le}{\leqslant}
\renewcommand{\ge}{\geqslant}
\let \al         =\alpha
\let \be         =\beta
\let \ga         =\gamma
\let \de         =\delta
\let \ze         =\zeta
\let \io         =\iota
\let \la         =\lambda
\let \si         =\sigma
\let \up         =\upsilon
\let \om         =\omega
\let \Ga         =\Gamma
\let \De         =\Delta
\let \Si         =\Sigma
\title{Holomorphically finitely generated Hopf algebras and quantum Lie groups}
\author{O. Yu. Aristov}
\email{aristovoyu@inbox.ru}
\address {Institute for Advanced Study in Mathematics, Harbin Institute of Technology,  Harbin 150001, China}
\keywords{Holomorphically finitely generated algebra, Topological Hopf algebra,  Cartier's theorem, Quantum group, Drinfeld--Jimbo algebra, Arens--Michael envelope, Complex Lie group, Holomorphic function of exponential type.}
\subjclass[2000]{Primary 17B37, 46H35, 22E10,  Secondary  32A38, 16S38, 58B34}
\begin{document}
 \maketitle
 \markright{Holomorphically finitely generated Hopf algebras}
\begin{abstract}
We study topological Hopf algebras that are holomorphically finitely generated (HFG) as Fr\'echet Arens--Micheal algebras in the sense of Pirkovskii. Some of them, but not all, can be obtained from affine Hopf algebras by applying the analytization functor. We show that a commutative HFG  Hopf algebra is always  an algebra of holomorphic functions on a complex Lie group (actually a Stein group),  and prove that the corresponding categories are equivalent.
With a compactly generated complex Lie group~$G$, Akbarov associated a cocommutative topological Hopf algebra, the algebra ${\mathscr A}_{exp}(G)$ of exponential analytic functionals. We show that it is HFG but not every cocommutative  HFG Hopf algebra is of this form. In the case when $G$ is connected, using previous results of the author we establish a theorem on the analytic structure of ${\mathscr A}_{exp}(G)$. It depends on the large-scale geometry of $G$. We also consider some interesting examples including complex-analytic analogues of classical $\hbar$-adic quantum groups.
\end{abstract}

{\small\it
\hfill To the memory of Aleksander Jurchenko,}

{\small\it
\hfill who explained me that Mathematics is}

{\small\it
\hfill  the most beautiful thing in the Universe}

\section*{Introduction}

We begin to study topological Hopf algebras satisfying a certain finiteness condition.  Whereas Hopf algebras are a classical object of research, their topological versions outside the $C^*$-algebraic context  have so far received less attention than they deserve. Here we impose additional limitations  in the definition that enable us to prove effective results and consider some encouraging examples satisfying these restrictions, in particular, those of quantum group nature. It seems to the author that this approach may even open up a new field of research at the intersection of functional analysis, noncommutative geometry and quantum group theory.

Here are the main questions that are under consideration.

1.~What is a correct analogue of the notion of an affine Hopf algebra in the context of complex-analytic noncommutative geometry?

2.~What is a quantum complex Lie group?

(Recall that `affine' here  stands for `finitely generated as an algebra.' Speaking on a quantum group we mean an object in the category dual to some category of Hopf algebras and assume that this object in some indistinct sense is a deformation of a commutative or cocommutative Hopf algebra.)

An answer to the first question, which is proposed here, is: a topological Hopf algebra with the finiteness condition introduced for a topological associative algebra by Pirkovskii in \cite{Pi14,Pi15}. (Such an algebra is called holomorphically finitely generated or simply HFG.) The main point of this paper is that the class of HFG Hopf algebras is sufficiently large to contain many non-trivial examples and small enough to prove some general, in particular, structure theorems.  We establish such a structure result in the commutative case and hope that, under additional restrictions, this can be done in the cocommutative case.

As regards the second question, we consider complex-analytic forms of some Hopf algebras originating from the quantum group theory. For example, standard deformations of universal enveloping algebras are usually considered in the
$\hbar$-adic form because the definitions include such function as the hyperbolic sine, which is not a polynomial. But this approach is somewhat extreme since the hyperbolic sine is just an entire function and it is not necessary to consider it as a formal power series. On the other hand, the relations  containing entire functions are inherent in our approach and, in author's opinion, complex-analytic forms of quantum algebras are not only natural but easier to manage than $\hbar$-adic. We discuss some examples that are not originated from affine Hopf algebras in Section~\ref{eqclg}.

Our purposes are:

-~To show that the category of commutative  HFG Hopf algebras is anti-equivalent to  the category of Stein groups; see Section~\ref{cHHFGa}.

-~To prove some general results on cocommutative  HFG Hopf algebras; see Section~\ref{cocHHFGamt}.

-~To consider examples of cocommutative HFG Hopf algebras and complex-analytic forms of some classical (and also less famous)  quantum groups; see Sections~\ref{Cocoexm}  and~\ref{eqclg}, respectively.

Note that  in this article we do not discuss the real case. It seems that a gap between noncommutative differential and noncommutative complex-analytic geometries is wider and deeper than between their commutative  counterparts. To find a similar definition of quantum group in the framework of noncommutative differential geometry  one needs other ideas and methods.

A detailed account on definitions and results follows.

\subsection*{Topological Hopf  algebras}
By a  topological  Hopf algebra we mean an object of some monoidal category of locally convex spaces. Usually spaces are assumed complete and the complete projective tensor product bifunctor $(-)\mathbin{\widehat{\otimes}} (-)$ is taken as a monoidal product.
Topological Hopf algebras in this sense, which are called \emph{Hopf $\mathbin{\widehat{\otimes}}$-algebras},  were studied in last decades; see, e.g., \cite{Lit2,BFGP,Pir_stbflat}. But the whole class of Hopf $\mathbin{\widehat{\otimes}}$-algebras is rather wide to prove structure theorems effectively.

We use the finiteness condition introduced by Pir\-kov\-skii.
Recall that for every complex space
$(X,\mathcal{O}_X)$ the set $\mathcal{O}(X)$ of global sections can be considered as a Fr\'echet Arens--Michael algebra
and that a  Fr\'echet algebra is called a \emph{Stein algebra} if it
is topologically isomorphic to $\mathcal{O}(X)$  for some Stein space $(X,\mathcal{O}_X)$.
In~\cite{Fo67} Forster proved that the functor $(X, \mathcal{O}_X)\mapsto \mathcal{O}(X)$ is an
anti-equivalence between the category of Stein spaces and the
category of Stein algebras. Pirkovskii defined an HFG (stands for 'holomorphically finitely generated') algebra as an algebra having finitely many `holomorphic generators' (see the exact meaning in Section~\ref{defssec})
and improved  Forster's result by showing that
a commutative Fr\'{e}chet Arens--Michael algebra is HFG if
and only if it is topologically isomorphic to $\mathcal{O}(X)$ for some
Stein space $(X, \mathcal{O}_X)$ of finite embedding dimension.
As a corollary,  the functor $(X, \mathcal{O}_X)\mapsto \mathcal{O}(X)$ is an anti-equivalence
between the category of Stein spaces of finite embedding dimension
and the category of commutative HFG algebras \cite[Theorems~3.22,
3.23]{Pi15} (announced in  \cite[Theorem~2.9]{Pi14}). Following the standard pattern, one can consider objects of the category  dual to that of HFG algebras  as `noncommutative Stein spaces'.

Here we apply Pirkovskii's idea to Hopf $\mathbin{\widehat{\otimes}}$-algebras by considering those of them which are HFG algebras.

\subsection*{The commutative case}

In view of famous Cartier's theorem, which asserts that any affine commutative Hopf algebra is the Hopf algebra of regular functions on some algebraic group, it is natural to guess that a similar result holds for an HFG  Hopf algebra. Indeed, this is the case: any commutative  HFG Hopf algebra is $\mathcal{O}(G)$, the set of holomorphic functions
on some complex Lie group~$G$ with standard operations. In fact, $G$ can be assumed to be a Stein group. Moreover, there is an  anti-equivalence of the corresponding categories; see Theorem~\ref{antiSTcoH} below. The proof is more or less standard and follows  the argument for Cartier's theorem. The auxiliary results for Theorem~\ref{antiSTcoH} are collected in Appendix~\ref{ap:top}.

\subsection*{The cocommutative case}

In \cite{Ak08} Akbarov  considered a cocommutative   topological Hopf algebra associated with a complex Lie group~$G$. The construction is in two steps. First, we take ${\mathscr A}(G)$, the Hopf $\mathbin{\widehat{\otimes}}$-algebra of \emph{analytic functionals} on~$G$, which is defined as the strong dual space of $\mathcal{O}(G)$; for details see \cite{Pir_stbflat} and the references therein. The second step is to apply the Arens--Michael envelope functor to~${\mathscr A}(G)$. (See the definition of the Arens--Michael envelope in Section~\ref{defssec}; it can be considered as an analytization functor and a natural bridge between noncommutative affine algebraic geometry and noncommutative complex-analytic geometry \cite{Pir_qfree}.)  The resulting cocommutative Arens--Michael Hopf $\mathbin{\widehat{\otimes}}$-algebra $\widehat{\mathscr A}(G)$ can be considered in some sense as dual to the commutative Arens--Michael Hopf $\mathbin{\widehat{\otimes}}$-algebra $\mathcal{O}(G)$; see details in [ibid.] and the paper~\cite{Ar20+B} of the author. Since $\mathcal{O}(G)$ is HFG, the natural question arises: whether this property inherits under duality?

The main result of Section~\ref{cocHHFGamt},
Theorem~\ref{cicHFGgen}, provides the following answer: the Arens--Michael envelope $\widehat{\mathscr A}(G)$ of ${\mathscr A}(G)$ is an HFG Hopf  algebra when~$G$ is compactly generated and, moreover,  $\widehat{\mathscr A}(G)$ is a~Fr\'echet algebra or even an~HFG algebra only when this condition holds. Note that if~$G$ is simply connected, the assertion easily follows from the fact that
$\widehat{\mathscr A}(G)$ coincides with the Arens--Michael envelope of $U(\mathfrak{g})$, the universal enveloping algebra of the Lie algebra of~$G$. In the general case, the proof needs some specific tools of the theory of locally convex spaces, e.g., Pt\'{a}k's open mapping theorem. Furthermore, the argument for Theorem~\ref{cicHFGgen} uses the technique of analytic free products,
necessary information on which is contained in Appendix.

We also show that the class of cocommutative HFG Hopf algebras is not exhausted
by algebras of the form $\widehat{\mathscr A}(G)$, where~$G$ runs over compactly generated Lie groups. In particular, $\widehat{U}(\mathfrak{f}_n)$, where $\mathfrak{f}_n$ is the free Lie algebra in~$n$ generators and $n>1$, is not of this form;
see Theorem~\ref{fgenLiea} and Example~\ref{freeLa}. So if one wants to find a characterization similar to the commutative case, some additional assumptions are required.

\subsection*{Examples}
The second half of the paper is devoted primarily to examples. In Section~\ref{Cocoexm} we consider that of cocommutative  HFG Hopf algebras associated with  Lie groups (including the discrete case) and Lie algebras. The main conclusion that we obtain from considering these examples is that the analytic structure of a  cocommutative  HFG Hopf algebra can be rather complicated. For example, the explicit description of $\widehat{\mathscr A}(G)$ for connected~$G$  that is given in Theorem~\ref{nilpenv} (in the nilpotent case) and Theorem~\ref{genpenv} (in the general case)
reflects the large-scale geometry of $G$ in an appropriate way. Preliminary work  has been done in  articles \cite{ArAMN,Ar19} of the author.

In Section~\ref{eqclg} some examples of quantum complex Lie groups are collected. We consider deformations of the Arens--Micheal envelope of the universal enveloping algebra of $\mathfrak{s}\mathfrak{l}_2$  and the Lie algebra of the `az+b' group as well as a deformation of the algebra of  holomorphic functions on the simply-connected cover of the `az+b' group. These examples are chosen because there are no naturally defined affine Hopf algebras for them. And the author stresses again that their complex-analytic forms are more natural than $\hbar$-adic.

\subsection*{On $C^*$-algebraic and bornological quantum groups}
The approach developed here is closer to classical quantum group theory (described, e.g., in \cite{Ka95,KSc}) than to $C^*$-algebraic quantum group theory (main points  of the latter theory  can be found in~\cite{Ti08}). A more recent study of bornological quantum groups was started in \cite{Vo08} (see also \cite{RY21}). The essential features of both $C^*$-algebraic and bornological approaches are the use of involutive multiplier Hopf algebras instead of standard ones and  the important role that played by invariant weights (whose existence can be postulated or proved).

We do not need multipliers here because all algebras under consideration are unital. On the other hand, even in the commutative case, the inclusion of an involution and invariant weights in the Hopf $\mathbin{\widehat{\otimes}}$-algebra context are problematic:
the complex conjugate of a non-constant holomorphic function is not holomorphic and  a non-trivial entire function is not integrable with respect to a Haar measure. Nevertheless, the most ambitious goal of future research is to associate a $C^*$-algebraic quantum group with every complex-analytic quantum group (as well as a quantum group in the usual sense). But at the moment the author does not see how it can be done and the relationship between the old theories and the new approach developed here is unclear.

Note also that $C^*$-algebraic theory uses invariant weights as a basis for duality. On the contrary, Hopf $\mathbin{\widehat{\otimes}}$-algebra theory does not need them to construct duals. This was first shown by Akbarov in~\cite{Ak08}. So life without invariant weights is possible. For some results on duality in this context see the author's papers \cite{Ar20+B,Ar22B}.

\subsection*{Open questions}
Here are a short list of questions.

First, note  that each  HFG Hopf algebra considered in this paper has invertible antipode. On the other hand, it is not hard to find an Arens--Michael Hopf $\mathbin{\widehat{\otimes}}$-algebra having non-invertible antipode. For example, one can consider famous Takeuchi's example $T=H(M_n^*)$ ($n\ge 2$) of a Hopf algebra, whose  antipode is  not invertible  \cite[Theorem~11]{Tak71}. Moreover, it can be proved that the antipode of the Arens--Michael envelope~$\widehat T$ is also non-invertible.

Note that the Arens--Michael envelope of an affine Hopf algebra is HFG; see Proposition~\ref{ANenvaff} below.
But $T$ is not affine and, moreover, it can be shown that $\widehat T$ is not HFG.
So far as known to the author, the question whether an affine Hopf algebra has invertible antipode is still open.
For some results and conjectures on the invertibility we refer the reader to~\cite{Sk06,LOWY18}. Summing the discussion, we conclude that the following question is natural.

\begin{que}
Is the antipode of any HFG Hopf  algebra invertible?
\end{que}

Now we take a look on the functor $H\mapsto \widehat H$. Of course, the Arens--Michael enveloping homomorphism for an associative algebra may have non-trivial kernel.
This is also possible for Hopf $\mathbin{\widehat{\otimes}}$-algebras, for example, this is the case when $H={\mathscr A}(G)$, where~$G$ is a non-linear Lie group~\cite{Ar20+B}. But the following question is open.

\begin{que}
Is the Arens--Michael enveloping homomorphism injective for every affine Hopf algebra? (Recall that in this case $\widehat H$  is HFG.)
\end{que}

\begin{que}
Suppose that $H_1$ and $H_2$ are affine Hopf algebras such that there is an HFG Hopf algebra isomorphism $\widehat H_1\cong \widehat H_2$. Is there a Hopf algebra isomorphism $H_1\cong H_2$?
\end{que}

The author is grateful to Sergei Akbarov, Andrei Domrin and Aleksei Pirkovskii  for valuable and stimulating discussions. The author  would also like to thank the referees for suggestions that improved the presentation and also for pointing out inaccuracies in the proofs of Theorem~\ref{antiSTcoH}, Propositions~\ref{Frfgen}, \ref{primfordis} and~\ref{af1qco},  as well as in Example~\ref{GLsm}.

 \section{Definitions}
\label{defssec}

We consider vector spaces, algebras and Hopf algebras over $\mathbb{C}$.
Most of algebras in this article are unital; exceptions are semigroup algebras in Section~\ref{cocHHFGamt}.

Recall that an \emph{Arens--Michael algebra} is a complete topological algebra such that its topology
can be determined by a system of submultiplicative seminorms $\|\cdot\|$, i.e., satisfying $\|ab\|\le\|a\|\,\|b\|$ for every~$a$ and~$b$ \cite{X2}. (An alternative terminology is `complete m-convex algebra' \cite{Fra05}.)

An \emph{Arens--Michael envelope} of a topological algebra~$A$
 is  a pair $(\widehat A, \io_A)$, where
$\widehat A$ is an Arens--Michael algebra  and $\io_A$ is a
continuous homomorphism $A \to \widehat A$, such that for any
Arens--Michael algebra $B$ and for each continuous homomorphism
$\varphi\!: A \to B$ there exists a unique continuous homomorphism
$\widehat\varphi\!:\widehat A \to B$ making the diagram
\begin{equation*}
  \xymatrix{
A \ar[r]^{\io_A}\ar[rd]_{\varphi}&\widehat A\ar@{-->}[d]^{\widehat\varphi}\\
 &B\\
 }
\end{equation*}
commutative
\cite[Chapter~5]{X2}.
It is not hard to see that~$\widehat A$  is the completion of~$A$ with respect to the topology determined by
all continuous submultiplicative seminorms.

By definition, Joseph Taylor's \emph{algebra ${\mathscr F}_n$ of free entire functions}, where $n\in\mathbb{N}$, is
the Arens--Michael envelope of the free algebra over~$\mathbb{C}$  in~$n$ generators.
The algebra ${\mathscr F}_n$  admits the following explicit description:
\begin{equation*}
{\mathscr F}_n= \Bigl\{
a=\sum_{\alpha\in W_n} c_\alpha \ze_\al\!:\,  \| a\|_\rho\!:=\sum_{\alpha} |c_\alpha|\, \rho^{|\alpha|}<\infty
\;\,\forall \rho>0\};
\end{equation*}
here  $\ze_1,\ldots,\ze_n$ are the generators, $W_n$ is the set of words in $1,\ldots,n$ and $\ze_\al$ denotes the corresponding product of generators. See details
in~\cite{T2,Ta73}.

A Fr\'echet Arens--Michael algebra $A$ is called \emph{holomorphically
finitely generated} (HFG for short) if it is holomorphically generated by a finite subset~$S$ \cite[Definition~3.16]{Pi15}.
This means that each $a\in A$ can be represented as a free entire function in elements of~$S$. The equivalent definition says that $A$ is HFG if it is topologically isomorphic to
${\mathscr F}_n/I$ for some $n\in \mathbb{N}$ and for some closed two-sided
ideal $I$ of ${\mathscr F}_n$ \cite[Proposition~3.20]{Pi15}.  Note that each HFG algebra is a nuclear space
\cite[Corollary~ 3.21]{Pi15}.

We consider the category of complete locally convex spaces. Endowed with the complete projective tensor product bifunctor $(-)\mathbin{\widehat{\otimes}} (-)$ it becomes  a~braided monoidal category. A Hopf algebra in this category  is called a~\emph{Hopf $\mathbin{\widehat{\otimes}}$-algebra}.  In detail, a~Hopf $\mathbin{\widehat{\otimes}}$-algebra is a $\mathbin{\widehat{\otimes}}$-bialgebra $(H,m,u,\De,\varepsilon)$ endowed with a continuous linear map~$S$ that satisfies the antipode axiom, i.e.,  the following diagram commutes:
\begin{equation*}
  \xymatrix{
 H\mathbin{\widehat{\otimes}} H\ar[d]^{S\mathbin{\widehat{\otimes}} 1}&\ar[l]_{\De}H\ar[d]^{u\varepsilon}\ar[r]^{\De}
 &H\mathbin{\widehat{\otimes}} H\ar[d]^{1\mathbin{\widehat{\otimes}} S}\\
H\mathbin{\widehat{\otimes}} H\ar[r]_m&H&H\mathbin{\widehat{\otimes}} H\ar[l]^m \,;}
\end{equation*}
here $H$ is a $\mathbin{\widehat{\otimes}}$-algebra with respect to the multiplication $m$ and unit $u\!:\mathbb{C}\to H$, and a
$\mathbin{\widehat{\otimes}}$-coalgebra with respect to the comultiplication $\De$ and counit $\varepsilon\!:H\to \mathbb{C}$.
See details, e.g., in \cite[Section~2]{Pir_stbflat}.

Now we can formulate our main definition.
\begin{df}
A Hopf $\mathbin{\widehat{\otimes}}$-algebra that is an HFG algebra is called a \emph{holomorphically
finitely generated Hopf $\mathbin{\widehat{\otimes}}$-algebra} or simply an \emph{HFG Hopf algebra}.
\end{df}
(Note that this term as well as `topological Hopf algebra' cannot be divided: in general, a HFG Hopf algebra is not a usual Hopf algebra.)

Recall that a Hopf algebra is said to be \emph{affine} if it is finitely generated as an algebra. Any affine Hopf algebra is a Hopf $\mathbin{\widehat{\otimes}}$-algebra when endowed with the strongest locally convex topology.

\begin{pr}\label{ANenvaff}
If $H$ is an affine Hopf algebra, then  $\widehat{H}$ is an HFG  Hopf algebra and the corresponding map $H\to \widehat H$ is a Hopf $\mathbin{\widehat{\otimes}}$-algebra homomorphism.
\end{pr}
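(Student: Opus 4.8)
The plan is to exploit two functorial properties of the Arens-Michael envelope: its compatibility with quotients, which yields the HFG property, and the universal property itself, which transports the remaining Hopf structure. Since $H$ is affine, fix a surjective algebra homomorphism $\pi\colon F_n\to H$, where $F_n$ denotes the free algebra in generators $\ze_1,\dots,\ze_n$, and write $H\cong F_n/J$ with $J=\Ker\pi$. First I would record the standard fact that the Arens-Michael envelope commutes with such quotients, namely $\wh H\cong\cF_n/\overline{\io_H(J)}$, where $\cF_n=\wh{F_n}$ is Taylor's algebra of free entire functions and $\overline{\io_H(J)}$ is the closure in $\cF_n$ of the image of $J$. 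This is immediate from universal properties: $\cF_n/\overline{\io_H(J)}$ is an Arens-Michael algebra receiving a homomorphism from $H$, while any homomorphism from $H$ into an Arens-Michael algebra lifts through $\cF_n$ and, by continuity, annihilates $\overline{\io_H(J)}$. As $\overline{\io_H(J)}$ is a closed two-sided ideal, the characterization recalled before the proposition (\cite[Proposition~3.20]{Pi15}) shows immediately that $\wh H$ is HFG.

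It remains to equip $\wh H$ with a compatible Hopf structure. The essential preliminary is that for Arens-Michael algebras $A,B$ the completed projective tensor product $A\ptn B$ is again Arens-Michael --- the projective tensor prenorm of two submultiplicative prenorms is submultiplicative for $(a\ptn b)(c\ptn d)=ac\ptn bd$, and $A\ptn B$ is complete --- and likewise for threefold products. Granting this, I would define the structure maps of $\wh H$ through the universal property of the envelope. The comultiplication $\wh\De$ is the unique extension to $\wh H$ of the continuous homomorphism $H\xrightarrow{\De}H\ptn H\xrightarrow{\io_H\ptn\io_H}\wh H\ptn\wh H$; the counit $\wh\ep$ is the extension of $\ep\colon H\to\CC$; and, viewing the antipode as a homomorphism $S\colon H\to H^{\mathrm{op}}$ and using $\wh{H^{\mathrm{op}}}=(\wh H)^{\mathrm{op}}$ (submultiplicative prenorms coincide on $A$ and $A^{\mathrm{op}}$), the antipode $\wh S$ is the extension of $H\xrightarrow{S}H^{\mathrm{op}}\to(\wh H)^{\mathrm{op}}$, that is, a continuous anti-endomorphism of $\wh H$. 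The multiplication and unit are those of the Arens-Michael algebra $\wh H$. By construction each of these maps restricts, along $\io_H$, to the corresponding map for $H$.

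To see that $(\wh H,m,u,\wh\De,\wh\ep,\wh S)$ is a Hopf $\ptn$-algebra and that $\io_H$ respects the structure, I would argue by density. The image $\io_H(H)$ is dense in $\wh H$ (the envelope is a completion of $H$), so the linear span of the elementary tensors $\io_H(a_1)\ptn\cdots\ptn\io_H(a_k)$ is dense in $\wh H\ptn\cdots\ptn\wh H$. Each Hopf axiom --- coassociativity, the counit and antipode identities, and the bialgebra compatibilities --- is an equality of continuous maps between tensor powers of $\wh H$; evaluated on the dense image of $\io_H$ it reduces, via the relations $\wh\De\,\io_H=(\io_H\ptn\io_H)\,\De$, $\wh\ep\,\io_H=\ep$ and $\wh S\,\io_H=\io_H\,S$, to the corresponding axiom for the Hopf $\ptn$-algebra $H$, which holds by hypothesis. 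Hence each identity holds on all of $\wh H$. The very same relations say that $\io_H$ is compatible with comultiplication, counit and antipode, and $\io_H$ is an algebra homomorphism by definition of the envelope; so $\io_H$ is a Hopf $\ptn$-algebra homomorphism.

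The main obstacle is functional-analytic rather than formal: one must be sure that the completed tensor powers $\wh H\ptn\wh H$ and $\wh H\ptn\wh H\ptn\wh H$ are genuinely Arens-Michael --- so that the universal property is available to define $\wh\De$ and to accommodate the coassociativity diagram --- and that restriction to a dense subspace is legitimate for each diagram. Both points are comfortable here because $\wh H$, as a quotient of the nuclear Fr\'echet algebra $\cF_n$, is itself nuclear and Fr\'echet, whence its projective tensor products are well behaved; this is precisely the place where the HFG structure, rather than that of a general Hopf $\ptn$-algebra, enters. Conceptually the same input may be packaged as the monoidal identity $\wh{A\ptn B}\cong\wh A\ptn\wh B$, giving a cleaner but essentially equivalent route.
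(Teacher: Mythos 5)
Your proof is correct and essentially reconstructs the paper's own argument, which simply cites Pirkovskii's results: \cite[Proposition~7.1]{Pi15} for the HFG part (proved exactly by your quotient description $\wh H\cong\cF_n/\overline{\io_H(J)}$) and \cite[Proposition~6.7]{Pir_stbflat} for the Hopf part and the fact that $\io_H$ is a Hopf $\ptn$-algebra homomorphism (proved by the same universal-property-plus-density scheme you use). One caveat: your closing claim that nuclearity of $\wh H$ is ``precisely where the HFG structure enters'' is misleading --- as your own prenorm computation already shows, $A\ptn B$ is Arens--Michael (and Hausdorff, so restriction to dense subspaces is legitimate) for \emph{arbitrary} Arens--Michael algebras $A$ and $B$, which is why the Hopf part holds for every Hopf $\ptn$-algebra, with the affineness of $H$ needed only for the HFG conclusion.
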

\begin{proof}
Since $H$ is a Hopf $\mathbin{\widehat{\otimes}}$-algebra,  so is the Arens--Michael envelope $\widehat H$; furthermore, the corresponding map $H\to \widehat H$ is a Hopf $\mathbin{\widehat{\otimes}}$-algebra homomorphism \cite[Proposition~6.7]{Pir_stbflat}.
On the other hand, $\widehat{H}$ is an HFG algebra  because $H$ is a finitely-generated algebra \cite[Proposition~7.1]{Pi15}.
\end{proof}

So we have a plenty of  HFG  Hopf algebra examples.
For instance, any finite dimensional Hopf algebra is HFG (with respect to any norm). If $\mathfrak{g}$ is a finitely-generated Lie algebra, then $\widehat U(\mathfrak{g})$ is an HFG  Hopf algebra (here $U(\mathfrak{g})$ denotes the universal enveloping algebra of~$\mathfrak{g}$).
Another example is the `function algebra' on the quantum `az+b' group studied in \cite[Sections~8.3, 8.4]{Ak08}.
In Section~\ref{eqclg} we consider examples of  HFG Hopf algebras that are  not so directly constructed.

 \section{Commutative  HFG Hopf algebras}
\label{cHHFGa}

Let $G$ be a complex Lie group. The Fr\'{e}chet algebra $\mathcal{O}(G)$ of
holomorphic functions on $G$ has the structure of a Hopf
$\mathbin{\widehat{\otimes}}$-algebra given by
$$
 \De (f)(g, h) = f(gh),\quad \varepsilon(f) = f(e),\quad  (Sf)(g) = f(g^{-1}),
$$
where $f\in\mathcal{O}(G)$ and $ g,h\in G$. Any holomorphic homomorphism $\pi\!:G_1\to G_2$, where $G_1$ and $G_2$ are complex Lie groups, induces a continuous homomorphism $F(\pi)\!:\mathcal{O}(G_2)\to \mathcal{O}(G_1)$ of Fr\'{e}chet algebras. Moreover,  the following assertion holds.
\begin{pr}
The correspondence $\pi\mapsto F(\pi)$ determines a contravariant functor $F$ from the
category of complex Lie groups to the category of commutative
Hopf $\mathbin{\widehat{\otimes}}$-algebras.
\end{pr}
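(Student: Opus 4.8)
The plan is to separate the statement into the object part, the morphism part, and functoriality. The object part---that each $\cO(G)$ is a commutative Hopf $\ptn$-algebra under the operations $\De,\ep,S$ displayed above---has essentially been recorded already; it rests on the canonical topological isomorphism $\cO(G)\ptn\cO(G)\cong\cO(G\times G)$, available because $\cO(G)$ is a nuclear Fr\'echet space, under which $\De(f)$ is the function $(g,h)\mapsto f(gh)$. For a holomorphic homomorphism $\pi\colon G_1\to G_2$ the map $F(\pi)$ is the pullback $f\mapsto f\circ\pi$, which the preceding discussion already provides as a continuous unital algebra homomorphism $\cO(G_2)\to\cO(G_1)$. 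It therefore remains to check that $F(\pi)$ is compatible with the coalgebra structure and the antipode, and that $F$ sends identities to identities and reverses composition.

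First I would verify the counit and comultiplication compatibilities. For the counit, $\ep_{G_1}(f\circ\pi)=(f\circ\pi)(e_1)=f(\pi(e_1))=f(e_2)=\ep_{G_2}(f)$, using $\pi(e_1)=e_2$. For the comultiplication I would use that, under the identifications $\cO(G_i)\ptn\cO(G_i)\cong\cO(G_i\times G_i)$, the map $F(\pi)\ptn F(\pi)$ is intertwined with the pullback along $\pi\times\pi\colon G_1\times G_1\to G_2\times G_2$. Since all maps involved are continuous and a holomorphic function on a complex manifold is determined by its values, it suffices to compare the two composites $\cO(G_2)\to\cO(G_1\times G_1)$ pointwise: on one side $\De_{G_1}(f\circ\pi)$ evaluates at $(g,h)$ to $f(\pi(gh))=f(\pi(g)\pi(h))$, while on the other side $(F(\pi)\ptn F(\pi))\De_{G_2}(f)$ evaluates to $\De_{G_2}(f)(\pi(g),\pi(h))=f(\pi(g)\pi(h))$; the key input is $\pi(gh)=\pi(g)\pi(h)$. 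Hence the two maps agree.

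For the antipode I would argue directly: $S_{G_1}(f\circ\pi)$ sends $g$ to $(f\circ\pi)(g^{-1})=f(\pi(g)^{-1})=(S_{G_2}f)(\pi(g))$, which is exactly $F(\pi)(S_{G_2}f)$, using $\pi(g^{-1})=\pi(g)^{-1}$. (Alternatively, one may invoke the general fact that a bialgebra morphism between Hopf algebras automatically commutes with the antipode.) Finally, functoriality is immediate from associativity of composition: $F(\mathrm{id}_G)(f)=f\circ\mathrm{id}_G=f$, and for $\pi_1\colon G_1\to G_2$, $\pi_2\colon G_2\to G_3$ one has $F(\pi_2\circ\pi_1)(f)=f\circ(\pi_2\circ\pi_1)=(f\circ\pi_2)\circ\pi_1=F(\pi_1)\bigl(F(\pi_2)(f)\bigr)$, so $F(\pi_2\circ\pi_1)=F(\pi_1)\circ F(\pi_2)$, which is the asserted contravariance. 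The only non-formal ingredient, and the point I expect to require the most care, is the naturality used for $\De$: namely the identification $\cO(X)\ptn\cO(Y)\cong\cO(X\times Y)$ and the fact that $F(\pi)\ptn F(\pi)$ corresponds under it to pullback along $\pi\times\pi$.
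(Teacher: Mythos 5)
Your proposal is correct: the paper itself offers no argument beyond ``The proof is straightforward,'' and what you have written is precisely the routine verification the author has in mind --- checking that the pullback $F(\pi)\colon f\mapsto f\circ\pi$ commutes with $\ep$, $\De$ and $S$ via the identity $\pi(gh)=\pi(g)\pi(h)$ and the nuclearity-based identification $\cO(G)\ptn\cO(G)\cong\cO(G\times G)$ under which $F(\pi)\ptn F(\pi)$ becomes pullback along $\pi\times\pi$, together with the trivial contravariant functoriality. You also correctly isolate the only genuinely non-formal ingredient, namely that naturality of the tensor-product identification, so there is nothing to add.
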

The proof is straightforward.

Recall that a complex Lie group  is called a \emph{Stein group} if the
underlying complex manifold is a Stein manifold.

 \begin{thm}\label{antiSTcoH}
The restriction of $F$ to the full subcategory of Stein groups is an
anti-equivalence with the category of commutative  HFG Hopf algebras.
 \end{thm}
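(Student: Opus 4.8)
The plan is to establish the anti-equivalence by verifying that $F$, restricted to Stein groups, is essentially surjective, fully faithful, and lands in the correct target category. The strategy mirrors the proof of Cartier's theorem but transposed into the complex-analytic setting, using Forster's anti-equivalence and Pirkovskii's HFG characterization as the analytic backbone.

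First I would check that $F$ maps Stein groups to commutative HFG Hopf algebras. For a Stein group $G$, the underlying manifold is Stein, so $\cO(G)$ is a Stein algebra; being a manifold it has finite embedding dimension (in fact equal to $\dim_\CC G$), so by Pirkovskii's theorem \cite[Theorems~3.22, 3.23]{Pi15} the algebra $\cO(G)$ is HFG. Together with the Hopf $\ptn$-structure already recorded above, this shows $F(G)$ is a commutative HFG Hopf algebra, so $F$ does restrict to a functor into the stated target category.

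Next I would prove essential surjectivity, which is the heart of the argument. Let $H$ be an arbitrary commutative HFG Hopf algebra. By Pirkovskii's characterization, $H\cong\cO(X)$ for a Stein space $X=(X,\cO_X)$ of finite embedding dimension. The plan is to transport the Hopf structure on $H$ through Forster's anti-equivalence to obtain group operations on $X$. Concretely, the comultiplication $\De\!:H\to H\ptn H$, together with the canonical identification $\cO(X)\ptn\cO(X)\cong\cO(X\times X)$ for Stein spaces, corresponds under Forster to a morphism $m_X\!:X\times X\to X$; likewise $\ep$ gives a point $e\in X$ and $S$ gives an inversion $X\to X$. Applying the faithfulness of Forster's functor to the coassociativity, counit, and antipode diagrams, one transfers each Hopf axiom into the corresponding group axiom, so $X$ becomes a complex-analytic group with holomorphic operations and $H\cong\cO(X)=F(X)$. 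The main obstacle I anticipate here is regularity of $X$: Forster produces a Stein space, not a priori a manifold, whereas a complex Lie group must be smooth. I would resolve this by invoking homogeneity: a complex-analytic group acts transitively on itself by holomorphic translations, so all local rings are isomorphic, and since $X$ has finite embedding dimension and a group structure it must be nonsingular, hence a genuine Stein manifold and thus a Stein group. (This is exactly where the finite-embedding-dimension hypothesis and the group structure interact.)

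Finally I would verify that $F$ is fully faithful on Stein groups. Faithfulness is immediate since a holomorphic homomorphism $\pi\!:G_1\to G_2$ is determined by the pullback $F(\pi)$ on functions, as $\cO(G_2)$ separates points of $G_2$ and a Stein manifold is determined by its function algebra. For fullness, given a Hopf $\ptn$-algebra homomorphism $\phi\!:\cO(G_2)\to\cO(G_1)$, Forster's anti-equivalence yields a unique holomorphic map $\pi\!:G_1\to G_2$ with $F(\pi)=\phi$; the compatibility of $\phi$ with comultiplication, counit, and antipode then forces $\pi$ to respect multiplication, identity, and inversion, so $\pi$ is a homomorphism of complex Lie groups. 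Assembling essential surjectivity with full faithfulness gives the desired anti-equivalence. The genuinely delicate step remains the smoothness of $X$ in essential surjectivity; the rest is a careful but routine translation of Hopf axioms through Forster's functor, using that $\ptn$ agrees with the analytic product on Stein algebras.
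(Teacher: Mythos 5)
There is a genuine gap in your treatment of smoothness, and it is precisely the step the paper's proof spends most of its effort on. You argue: ``a complex-analytic group acts transitively on itself by holomorphic translations, so all local rings are isomorphic, and since $X$ has finite embedding dimension and a group structure it must be nonsingular.'' Homogeneity alone does not give this. From transitivity of translations you only get that all stalks $\cO_{X,x}$ are isomorphic; to conclude regularity you need at least one nonsingular point, and the standard fact you would invoke --- that the singular locus is nowhere dense --- holds only for \emph{reduced} complex spaces. A priori, Forster's anti-equivalence hands you a Stein space $(X,\cO_X)$ that could be non-reduced at every point (a homogeneous ``fattening''), and nothing in the group axioms alone rules this out: in characteristic~$p$, non-reduced group schemes such as $\al_p$ and $\mu_p$ exist, so the exclusion of nilpotents is genuinely a characteristic-zero phenomenon and is exactly the content of Cartier's theorem that your argument silently assumes.

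The paper fills this gap as follows. It first shows that the stalk $\cO_{X,e}$ at the identity is a (usual) Hopf algebra: using $\cO_{X\times X,(e,e)}\cong\cO_{X,e}\ptn\cO_{X,e}$ and the fact that $\De_e$ is a homomorphism of local algebras, one gets $\De_e(\fm)\subset\fm\otimes 1+1\otimes\fm$, so $\De_e$ lands in the algebraic tensor product. Then the associated graded algebra $C=\bigoplus_k \fm^k/\fm^{k+1}$ is a commutative graded Hopf algebra, and the Milnor--Moore theorem (characteristic zero) forces $C$ to be a polynomial algebra, hence without nilpotents; combined with the Krull intersection lemma (the stalk is an analytic local algebra, hence noetherian, so $\bigcap_k\fm^k=\{0\}$), any nilpotent $r\in\cO_{X,e}$ would yield a nonzero nilpotent in $C$, a contradiction. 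Only after reducedness is established at $e$, and transported to all points by translations $(\de_x\otimes 1)\De$, does the homogeneity argument you proposed become valid: in a reduced space the singular locus is nowhere dense, so a single singular point would make every point singular, which is impossible. The remainder of your proposal --- the sufficiency direction via Pirkovskii's theorem, the transfer of Hopf axioms through Forster's anti-equivalence to obtain the group structure, and the full-faithfulness discussion --- matches the paper's approach (the paper treats functoriality as routine and concentrates on the characterization), but as written your essential-surjectivity argument is incomplete without the reducedness step.
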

The argument  is similar to that for Cartier's theorem, which states that
every algebraic group scheme over a field of characteristic zero
is smooth;   see  \cite[\S\,3.g, p.~71 Theorem~3.23]{Mil} or \cite[\S\,8.7.3, p.~236]{Pr07}, or~\cite{He12}.

\begin{proof}
We need to show that a commutative Hopf $\mathbin{\widehat{\otimes}}$-algebra $H$ is HFG if and only if there is a Stein group $G$ such that $\mathcal{O}(G)\cong H$. To prove the sufficiency note that for  a Stein group $G$  its underlying complex manifold has finite embedding dimension. Therefore, by \cite[Theorem 3.22]{Pi15}, the algebra $\mathcal{O}(G)$ is HFG.

The proof of the necessity splits into three steps. We show that: (1)~the spectrum of a commutative  HFG Hopf algebra is a group; (2)~it is a reduced Stein space and (3)~it is regular, i.e., a manifold. Below we use auxiliary results contained in Appendix~\ref{ap:top}.

(1)~The functor $(X,\mathcal{O}_X) \mapsto \mathcal{O}(X)$ is an anti-equivalence between the
category of Stein spaces of finite embedding dimension and the category of
commutative HFG algebras \cite[Theorems~3.22 and~3.23]{Pi15}. (Here we denote by $\mathcal{O}(X)$ the algebra of global sections.) Therefore for any commutative  HFG Hopf algebra~$H$ there exists a Stein space $X$ such that
$\mathcal{O}(X)\cong H$  as an HFG algebra. Since $\mathcal{O}(X)\mathbin{\widehat{\otimes}} \mathcal{O}(X)\cong
\mathcal{O}(X\times X )$ by Proposition~\ref{OOtiso}, it follows that the comultiplication $\mathcal{O}(X)\to\mathcal{O}(X)\mathbin{\widehat{\otimes}}
\mathcal{O}(X)$ induces a holomorphic map $X\times X\to X$.   Since $H$~is
commutative, the antipode is a homomorphism and hence  induces a holomorphic map $X\to X$. Also, the counit gives a distinguished element~$e$ of~$X$. It is not hard to check that the group axioms holds for $X$.

(2)~Further, we want to show that $(X,\mathcal{O}_X)$ is reduced, i.e., all nilpotent elements in the stalk
$$
\mathcal{O}_{X,x}\!:=\varinjlim_{x\in U}\mathcal{O}_X(U)
$$
are trivial for each $x\in X$.
By Proposition~\ref{indcattopa}, the colimit can be taken in the category of commutative $\mathbin{\widehat{\otimes}}$-algebras. Note that the functor $(-)\mathbin{\widehat{\otimes}} (-)$ is the coproduct functor in this category. So we can use the interchange property of colimits
for small categories:
\begin{multline} \label{OXXxy}
\mathcal{O}_{X\times X,(x,y)}=\varinjlim_{x\in U,\,y\in V}\mathcal{O}_{X\times X}(U\times V)\cong
\varinjlim_{x\in U,\,y\in V}\mathcal{O}_X(U)\mathbin{\widehat{\otimes}}
\mathcal{O}_X(V) \cong \\
\varinjlim_{x\in U}\mathcal{O}_X(U)\mathbin{\widehat{\otimes}} \varinjlim_{y\in V}\mathcal{O}_X(V) \cong\mathcal{O}_{X,x}\mathbin{\widehat{\otimes}}
\mathcal{O}_{X,y} \qquad (x,y\in X).
\end{multline}
(Note that $\mathcal{O}_{X\times X,(x,y)}$ can be identified with a coproduct in the category of local analytic algebras \cite[Chapter~3, \S\,5]{GR1} but the functional-analytic point of view seems more natural here.)

We claim that $\mathcal{O}_{X,e}$ is a Hopf $\mathbin{\widehat{\otimes}}$-algebra. Indeed, the multiplication  $X\times X\to X$ induces a morphism of locally ringed spaces 
$$ 
(X\times X ,\mathcal{O}_{X\times X})\to (X,\mathcal{O}_X)
$$ 
that satisfies the associativity property. In view of \eqref{OXXxy}
this morphism provides a coassociative homomorphism
$$
\De_e\!:\mathcal{O}_{X,e}\to\mathcal{O}_{X\times X,(e,e)}\cong\mathcal{O}_{X,e}\mathbin{\widehat{\otimes}} \mathcal{O}_{X,e}.
$$
We get an antipode and a counit similarly. It is not hard to see that the axioms of
Hopf $\mathbin{\widehat{\otimes}}$-algebra are satisfied.

Further, we proceed as in the pure algebraic case (cf. \cite[\S~8.7.3,
Theorem~1]{Pr07}).  Let $\mathfrak{m}$ denote the unique maximal ideal of $\mathcal{O}_{X,e}$.  Consider  the associated graded algebra $C\!:=\bigoplus_{k\in\mathbb{Z}_+} \mathfrak{m}^k/\mathfrak{m}^{k+1}$  (see, e.g., \cite[\S\,5.1]{Ei95}).

We claim that $C$ is a commutative graded Hopf algebra (in the standard sense, no topology is assumed). Indeed, being the image of the maximal ideal of an algebra of the convergent power series, $\mathfrak{m}$  is finitely generated. Then each power of it has finite codimension.  So for every $k,l\in \mathbb{Z}_+$ we can treat $\mathfrak{m}^k{\widehat{\otimes}}\mathfrak{m}^l$ as a subspace of $\mathcal{O}_{X,e}\mathbin{\widehat{\otimes}} \mathcal{O}_{X,e}$. Since the comultiplication  $\De_e\!:\mathcal{O}_{X,e}\to\mathcal{O}_{X,e}\mathbin{\widehat{\otimes}} \mathcal{O}_{X,e}$ is a homomorphism of local algebras, i.e., the image of the maximal ideal is contained in the maximal
ideal, we have $\De_e(\mathfrak{m})\subset \mathfrak{m}\mathbin{\widehat{\otimes}} \mathcal{O}_{X,e}+\mathcal{O}_{X,e}\mathbin{\widehat{\otimes}}  \mathfrak{m}$. The antipode $S_e\!:\mathcal{O}_{X,e}\to\mathcal{O}_{X,e}$ is also a homomorphism of
local algebras and hence  $S_e(\mathfrak{m})\subset \mathfrak{m}$. It clear that the counit vanishes on~$\mathfrak{m}$. Thus $\mathfrak{m}$ is a Hopf ideal. Then, as in the algebraic case,  $\De_e$, $S_e$ and the counit induce a structure of
commutative graded Hopf algebra on~$C$.

We now apply the Milnor-Moore theorem
\cite[Theorem~3.8.3]{Car}, which asserts that, in  characteristic~$0$,  a commutative graded Hopf
algebra over a field~$K$ with  zero term equal to $K$ is a free commutative algebra (a polynomial algebra).  Therefore $C$ contains no non-zero nilpotent element.

Now suppose that $r$  is a non-trivial nilpotent element in $\mathcal{O}_{X,e}$.
Being a local analytic algebra, $\mathcal{O}_{X,e}$ is Noetherian
\cite[Chapter~2, Theorem~1]{GR1}. So by the Krull intersection lemma \cite[Appendix, \S~2.3]{GR1}, we have that $\bigcap_{k\in\mathbb{Z}_+}\mathfrak{m}^k=\{0\}$.  Therefore  there is $k$ such that
$r$ is in $\mathfrak{m}^k$ but not in $\mathfrak{m}^{k+1}$. Since $r$ is nilpotent,  the
corresponding element  $r+\mathfrak{m}^{k+1}$ in $C$ is also nilpotent.
This contradiction  implies that $\mathcal{O}_{X,e}$ is reduced.

Denote  by $\de_x$ the evaluation homomorphism $H\to \mathbb{C}$ at $x\in X$.
Then $(\de_x \otimes 1)\De$ and  $(\de_{x^{-1}} \otimes 1)\De$  are mutually
inverse continuous homomorphisms of Stein algebras.  By Forster's theorem \cite{Fo67},
we have a Stein space automorphism of~$X$, which maps $e$ to $x$.
Therefore the stalks  $\mathcal{O}_{X,x}$ and $\mathcal{O}_{X,e}$ are isomorphic. Thus,  $X$
is reduced at each point.

(3)~Finally, we claim that $(X,\mathcal{O}_X)$ is regular, i.e., it contains no
singular points. Suppose the contrary: let~$x$ be a singular point of~$X$. Then for every $y\in X$ we have
$\mathcal{O}_{X,x}\cong\mathcal{O}_{X,y}$ and so~$y$  is also singular. To get a
contradiction note that the set of singular points of a reduced complex
analytic space is nowhere dense \cite[Chapter~6, \S2.2]{GR2}.

Thus, $X$ is a Stein manifold and a group with holomorphic multiplication and holomorphic inverse, i.e., it is a Stein group.
 \end{proof}

\begin{rem}
Theorem~\ref{antiSTcoH} implies, in particular, that if $G$ is a  complex Lie group that is not a Stein group, then the Hopf $\mathbin{\widehat{\otimes}}$-algebra $\mathcal{O}(G)$  is isomorphic to $\mathcal{O}(G_S)$ for some Stein group $G_S$. When $G$ is connected this fact can be also derived from Morimoto's result, which asserts that for every  connected  complex  Lie group  $G$  there  exists  the smallest closed complex subgroup $G^0$  such that  $G/G^0$  is a Stein group \cite[Theorem~1]{Mo65}. (In \cite{ArLi} it is proposed to call $G^0$ the \emph{Morimoto subgroup} of~$G$.)
 \end{rem}

We finish this section with examples of  commutative Fr\'echet Arens--Michael Hopf algebras that are not HFG. Note that the algebras $\mathfrak{A}_s$ considered here will also be used in the proof of Theorem~\ref{AMEAG}.

\begin{exm}\label{commnHFG}
For $s\ge 0$ put
\begin{equation}
 \label{faAsdef}
\mathfrak{A}_s\!:=\Bigl\{a=\sum_{n=0}^\infty  a_n x^n\! :
\|a\|_{r,s}\!:=\sum_{n=0}^\infty |a_n|\frac{r^n}{n!^s}<\infty
\;\forall r>0\Bigr\},
\end{equation}
where $x$ is a formal variable.
It is not hard to see that $\mathfrak{A}_s$ is a Fr\'echet Arens--Michael algebra   with respect to the multiplication extended from the polynomial algebra  $\mathbb{C}[x]$ \cite[Proposition~4]{ArRC}.

We claim that other Hopf algebra operations can also  be extended from $\mathbb{C}[x]$ to $\mathfrak{A}_s$ and, moreover,  $\mathfrak{A}_s$ is a Hopf $\mathbin{\widehat{\otimes}}$-algebra. The only non-trivial assertion is that the
comultiplication is continuous.
It is a standard fact from the K\"{o}the space theory (see, for example, \cite[Proposition~3.3]{Pi02}) that
\begin{equation}\label{Kothetpr}
\mathfrak{A}_s\mathbin{\widehat{\otimes}} \mathfrak{A}_s\cong
\Bigl\{c=\sum_{i,j=0}^\infty  c_{ij} x^i\otimes x^j\! :
\|c\|'_{r,s}\!:=\sum_{i,j=0}^\infty |c_{ij}|\frac{r^{i+j}}{(i!j!)^s}<\infty
\;\forall r>0\Bigr\}\,.
\end{equation}
Since $\De(x)=x\otimes 1+1\otimes x$, we have for any $p=\sum_n a_nx^n\in \mathbb{C}[x]$ that
$$
\De(p)=\sum_n a_n \sum_{i+j=n} {n \choose j}x^i\otimes x^j.
$$
Therefore,
$$
\|\De(p)\|'_{r,s}=\sum_n \, \frac{|a_n|\,r^n}{n!^{s}}\, \sum_{i+j=n} {n \choose j}^{s+1}.
$$
Since $s\ge 0$, we have for every $n$ that
$$
\sum_{i+j=n}   {n \choose j}^{s+1}\le \left(\sum_{i+j=n}   {n \choose j}\right)^{s+1}=2^{n(s+1)}.
$$
Hence,
$$
\|\De(p)\|'_{r,s}\le\sum_n |a_n| \, \frac{2^{n(s+1)}\,r^n}{n!^{s}}
$$
and so $\De$ is continuous.

Thus, $\mathfrak{A}_s$ is a Fr\'echet Arens--Michael Hopf algebra, which is obviously commutative and cocommutative. But it is not HFG when $s>0$  by \cite[Proposition~10]{ArRC}. These assertions hold also for $\mathfrak{A}_\infty\!:=\mathbb{C}[[x]]$ (the algebra of formal power series).
 \end{exm}

 \section{Cocommutative  HFG Hopf algebras\\  of the form $\widehat {\mathscr A}(G)$: General results}
\label{cocHHFGamt}

 We discuss properties of the Arens--Michael envelope $\widehat{\mathscr A}(G)$ of the algebra ${\mathscr A}(G)$ of analytic functionals for a complex Lie group~$G$. Since ${\mathscr A}(G)$ is dual to the commutative Hopf HFG algebra $\mathcal{O}(G)$, which is a nuclear Fr\'echet space, it is a cocommutative Hopf $\mathbin{\widehat{\otimes}}$-algebra and so is $\widehat{\mathscr A}(G)$.

Recall that a locally compact group~$G$ is said to be
\emph{compactly generated} if there is a relatively compact
symmetric  set that generates~$G$.
The following theorem is the main result in this section.

 \begin{thm}\label{cicHFGgen}
Let $G$ be a complex Lie group countable at infinity. The following conditions are equivalent:
\begin{mycompactenum}
\item
$\widehat{\mathscr A}(G)$ is an HFG algebra;
\item
$\widehat{\mathscr A}(G)$ is a Fr\'echet algebra;
\item $G$ is compactly generated.
\end{mycompactenum}
\end{thm}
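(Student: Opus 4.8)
The plan is to prove the cycle $(i)\Rightarrow(ii)\Rightarrow(iii)\Rightarrow(i)$. The implication $(i)\Rightarrow(ii)$ is immediate, since by definition every HFG algebra is isomorphic to some $\cF_n/I$ and is therefore Fr\'echet. The content lies in the other two implications, and the organising principle is the extension $1\to G_0\to G\to\Gamma\to 1$, where $G_0$ is the open connected identity component and $\Gamma=G/G_0$ is a discrete group, countable because $G$ is countable at infinity. Since a connected group is generated by any neighbourhood of the identity, $G$ is compactly generated if and only if $\Gamma$ is finitely generated, and this is the combinatorial reformulation of $(iii)$ that I would use throughout.

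For $(iii)\Rightarrow(i)$ I would exhibit finitely many holomorphic generators explicitly. Write $d=\dim\fg$, fix a basis $e_1,\dots,e_d$ of $\fg\subset\cA(G)$, and pick $g_1,\dots,g_k\in G$ whose images generate $\Gamma$. I claim the $d+2k$ elements $e_1,\dots,e_d,\delta_{g_1}^{\pm1},\dots,\delta_{g_k}^{\pm1}$ holomorphically generate $\wh\cA(G)$. The point is that $\exp(X)=\sum_n X^n/n!$ is a free entire function of $e_1,\dots,e_d$ inside $\wh{U(\fg)}\subset\wh\cA(G)$, so every point mass $\delta_{\exp X}$ is reached; as $G_0$ is generated by $\exp\fg$ and the $g_j$ generate $G$ modulo $G_0$, all $\delta_g$ lie in the closed subalgebra generated by these elements, and their span is dense in $\cA(G)$, hence in $\wh\cA(G)$. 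To convert this density into an isomorphism $\wh\cA(G)\cong\cF_{d+2k}/I$, I would assemble the natural candidate as an analytic free product: $\wh{U(\fg)}$ is HFG (finitely generated Lie algebra) and $\wh{\CC[\Gamma]}$ is HFG by Proposition~\ref{ANenvaff} (the group algebra of a finitely generated group is affine), their analytic free product is again HFG by the properties recalled in the Appendix, and the crossed-product relations coming from the adjoint action and the extension cocycle cut out a closed two-sided ideal. This produces an HFG algebra $B$ with a continuous homomorphism $B\to\wh\cA(G)$ that is an algebraic isomorphism onto a dense subalgebra.

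The main obstacle is exactly this last upgrade: passing from a continuous dense-range homomorphism $B\to\wh\cA(G)$ to a topological isomorphism. This is where Pt\'ak's open mapping theorem enters: both algebras lie in the class to which it applies, so a continuous nearly open surjection between them is automatically open, yielding $\wh\cA(G)\cong B$ and hence $\wh\cA(G)$ HFG. Verifying the hypotheses of Pt\'ak's theorem --- completeness of the relevant spaces and near-openness of the map --- together with checking that the crossed-product ideal is genuinely closed, is the technically delicate heart of the argument; the simply connected case, where $\wh\cA(G)=\wh{U(\fg)}$ outright, is the guiding special case that bypasses these difficulties.

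For $(ii)\Rightarrow(iii)$ I would assume $\wh\cA(G)$ is Fr\'echet. The augmentation-type surjection $\cA(G)\to\CC[\Gamma]$ induced by $G\to\Gamma$ has kernel the two-sided ideal generated by $\fg$, so $\CC[\Gamma]\cong\cA(G)/(\fg)$; since the Arens-Michael envelope carries a quotient algebra to the corresponding Hausdorff quotient, $\wh{\CC[\Gamma]}\cong\wh\cA(G)/\overline{(\fg)}$ is a quotient of a Fr\'echet space and is therefore Fr\'echet, in particular metrizable. It then suffices to show that $\wh{\CC[\Gamma]}$ is \emph{not} metrizable when $\Gamma$ is not finitely generated. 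For this I would use the weighted-$\ell^1$ description of $\wh{\CC[\Gamma]}$: its topology is determined by the submultiplicative weights $\omega\colon\Gamma\to[1,\infty)$, and metrizability would mean that countably many of them dominate all the rest. Taking a strictly increasing chain of finitely generated subgroups with union $\Gamma$ and constructing, by a diagonal argument, a submultiplicative weight that grows on successive new generators faster than any prescribed countable family, one obtains a continuous seminorm dominated by no countable family --- a contradiction. Hence $\Gamma$ is finitely generated, i.e.\ $G$ is compactly generated. The one point requiring care is making the weight construction work for an arbitrary (not necessarily free or abelian) infinitely generated $\Gamma$, which I expect to handle through the free structure available along the chain.
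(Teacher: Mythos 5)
Your implication $\mathrm{(iii)}\Rightarrow\mathrm{(i)}$ has a genuine gap at exactly the step you flag as the ``technically delicate heart'': Pt\'{a}k's open mapping theorem cannot perform the upgrade you want. Pt\'{a}k's theorem makes a continuous \emph{surjective} map from a Pt\'{a}k space to a barreled space open; it does not convert dense range into surjectivity, and a continuous injection of one Fr\'{e}chet algebra onto a dense proper subalgebra of another is a standard phenomenon that nothing in your construction excludes. Moreover, your claim that $B\to\wh\cA(G)$ is ``an algebraic isomorphism onto a dense subalgebra'' presupposes that the closed ideal of crossed-product relations is exactly the kernel, which is unproved and in fact doubtful: when $G_0$ is not simply connected, already $\wh U(\fg)\to\wh\cA(G_0)$ has a nontrivial closed kernel encoding $\pi_1(G_0)$ (for $G_0=\CC^\times$ one must kill $e^{2\pi iX}-1$), and relations coming only from the adjoint action and the extension cocycle do not see this. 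Finally, barreledness of the target $\wh\cA(G)$, needed for Pt\'{a}k, is not available a priori --- it is part of what is being proved.

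The structural point the paper exploits, and which repairs all of this, is that no isomorphism is needed: HFG passes to quotients, so an \emph{open surjection} from an algebra with HFG envelope suffices (Lemma~\ref{Ptba0}). Accordingly the paper applies Pt\'{a}k at the level of $\cA(G)$ rather than of the envelopes: it takes the plain free product $\CC\Ga_1\mathbin{\ast}\cA(G_0)$ --- no crossed-product relations, and $\cA(G_0)$ rather than $U(\fg)$, the connected case having been settled beforehand via the universal cover and the same open-surjection device (Propositions~\ref{csicHFG} and~\ref{opensurvl}) --- proves honest surjectivity onto $\cA(G)$ from the support decomposition $\cA(G)\cong\bigoplus_{\ga\in\Ga}\cA(\ga)$ (Lemma~\ref{sCgA}: every functional is a finite sum $\sum\de_g\star\nu_g$ with $\supp\nu_g\subset G_0$), and gets openness from Pt\'{a}k because the source is a nuclear (DF)-space, hence a Pt\'{a}k space, while $\cA(G)$ is barreled --- properties verifiable for these concrete duals but not for the unknown envelope. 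Your $\mathrm{(ii)}\Rightarrow\mathrm{(iii)}$ half, by contrast, is essentially the paper's proof: you do not need to identify the kernel of $\cA(G)\to\CC\Ga$ (openness and surjectivity of this map, via Hahn--Banach and Pt\'{a}k, are all that Lemma~\ref{Ptba0} requires), your diagonal argument with weights $e^{\ell_F}$ is Proposition~\ref{Frfgen}, and your worry about non-free $\Ga$ is unfounded: choosing $s_n\notin S_{n-1}$ forces every factorization of $s_n$ to use a generator of index at least $n$, which is all the lower bound needs.
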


See Example~\ref{commnHFG} for cocommutative Hopf $\mathbin{\widehat{\otimes}}$-algebras that are not HFG.

We begin the proof of Theorem~\ref{cicHFGgen} with the implication
$\mathrm{(iii)}\Longrightarrow(\mathrm{i)}$. The argument will
proceed from a partial case to general in three steps:

1.~for simply-connected $G$;

2.~for arbitrary  connected $G$;

3.~for arbitrary  compactly generated $G$.

(Note that  a connected locally compact group is always compactly generated \cite[Theorem~5.7]{HeRo}.)

\subsection*{Step 1:  $G$ is simply connected} In fact, this case is considered in \cite{Pir_stbflat}.
If  $\mathfrak{g}$ is the Lie algebra of  a complex Lie group~$G$, then there is a
homomorphism of  Hopf $\mathbin{\widehat{\otimes}}$-algebras $\tau\!:U(\mathfrak{g}) \to {\mathscr A}(G)$ (see, e.g.,
\cite[(42)]{Pir_stbflat}). Since the  Arens--Michael envelope is an
endofunctor on the category of Hopf $\mathbin{\widehat{\otimes}}$-algebras [ibid. Proposition~6.7], we have the natural
homomorphism  $\widehat\tau\!:\widehat U(\mathfrak{g}) \to \widehat{\mathscr A}(G)$  of Hopf $\mathbin{\widehat{\otimes}}$-algebras.

\begin{pr}\label{csicHFG}
Let $G$ be a  simply-connected complex Lie group with Lie algebra~$\mathfrak{g}$.
Then the homomorphism $$\widehat\tau\!:\widehat U(\mathfrak{g}) \to
\widehat{\mathscr A}(G)$$ of $\mathbin{\widehat{\otimes}}$-Hopf algebras  is an  isomorphism.  As a corollary, $\widehat{\mathscr A}(G)$ is an HFG  Hopf
algebra.
\end{pr}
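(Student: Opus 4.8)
The plan is to establish the isomorphism $\wh\tau\!:\wh U(\fg)\to\wh\cA(G)$ by exhibiting a common universal property, and then deduce the HFG property as a corollary. The key structural fact is that for a simply-connected complex Lie group, the exponential map and the correspondence between Lie-group homomorphisms and Lie-algebra homomorphisms give a tight control over maps into Arens-Michael algebras. First I would unwind both sides through the definition of the Arens-Michael envelope: by the universal property, a continuous homomorphism out of $\wh U(\fg)$ (resp. $\wh\cA(G)$) into an arbitrary Arens-Michael algebra $B$ is the same as a continuous homomorphism out of $U(\fg)$ (resp. $\cA(G)$) into $B$. So it suffices to show that $\tau\!:U(\fg)\to\cA(G)$ induces a bijection between continuous homomorphisms $\cA(G)\to B$ and continuous homomorphisms $U(\fg)\to B$, naturally in the Arens-Michael algebra $B$; by Yoneda this forces $\wh\tau$ to be an isomorphism.

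The heart of the matter is therefore to identify what a continuous homomorphism $U(\fg)\to B$ into an Arens-Michael algebra is, and to match it with a homomorphism $\cA(G)\to B$. A continuous homomorphism from $U(\fg)$ amounts to a Lie-algebra homomorphism $\fg\to B$ whose image consists of elements admitting a well-defined exponential in the Arens-Michael algebra $B$ (convergence of the exponential series is guaranteed because $B$ is Arens-Michael, so its topology is given by submultiplicative prenorms). Since $G$ is simply connected, such a Lie-algebra homomorphism integrates uniquely to a holomorphic homomorphism $G\to B^{\times}$ into the group of invertibles of $B$ — or more precisely, the universal property of $(U(\fg),\exp)$ as an ``enveloping'' of $G$ in the analytic category matches exactly the universal property of $\cA(G)$. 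The compatibility with the homomorphism $\tau$ is what makes the two representing objects agree. I would assemble these identifications into the naturality statement of the previous paragraph, invoking \cite{Pir_stbflat} for the precise form of $\tau$ and for the fact that the Arens-Michael envelope is an endofunctor on Hopf $\ptn$-algebras, so that $\wh\tau$ is automatically a Hopf $\ptn$-algebra morphism and hence the isomorphism is one of Hopf $\ptn$-algebras.

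The main obstacle I anticipate is the simple-connectedness hypothesis: it is precisely what guarantees that the integration of the Lie-algebra homomorphism to a group homomorphism is \emph{unique} and \emph{exists}, so that the two universal properties coincide on the nose. For a non-simply-connected group this step fails, which is why the proposition is stated only in the simply-connected case and the general case is handled separately through the reduction steps announced in the excerpt. A secondary technical point is to verify that the identifications are topological, i.e. that the matching of homomorphisms respects continuity in both directions; this relies on $\cO(G)$ being a nuclear Fr\'echet space so that $\cA(G)$ is well-behaved under duality and the tensor-product identities hold. Once the isomorphism $\wh U(\fg)\cong\wh\cA(G)$ is in hand, the corollary is immediate: $\fg$ is finite-dimensional and hence a finitely generated Lie algebra, so $U(\fg)$ is a finitely generated (affine) algebra, whence $\wh U(\fg)$ is an HFG algebra by \cite[Proposition~7.1]{Pi15}; combined with the already-established Hopf $\ptn$-algebra structure this shows $\wh\cA(G)$ is an HFG Hopf algebra.
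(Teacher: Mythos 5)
Your proposal is correct, but it takes a visibly different route from the paper, whose own ``proof'' of Proposition~\ref{csicHFG} is a two-line citation: the isomorphism is taken from \cite[Proposition~9.1]{Pir_stbflat} as explained in \cite[Proposition~2.1]{ArAMN}, and the HFG corollary from Proposition~\ref{ANenvaff}. What you do is reconstruct, from first principles, essentially the argument the paper delegates to those references: represent both envelopes by their universal property, so that it suffices to show $\tau$ induces a natural bijection between continuous homomorphisms $\cA(G)\to B$ and $U(\fg)\to B$ for Arens--Michael $B$, and then conclude by Yoneda. This is sound, and you correctly isolate where simple connectedness enters (existence \emph{and} uniqueness of integration). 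Two remarks on where your sketch is thin, though neither is a fatal gap. First, the integration step $\fg\to B$ $\rightsquigarrow$ $G\to B^{\times}$ deserves an actual proof: write $B=\varprojlim_\la B_\la$ with $B_\la$ Banach, use the Baker--Campbell--Hausdorff series to get a local holomorphic homomorphism into each $B_\la^{\times}$, extend globally by monodromy (this is where simple connectedness is used), and patch via uniqueness, noting that $b\in B$ is invertible as soon as each image in $B_\la$ is. (Your proviso about the image ``admitting a well-defined exponential'' is vacuous: every element of an Arens--Michael algebra has an entire functional calculus.) Second, the step you phrase vaguely as matching ``universal properties in the analytic category'' is precisely the bijection between holomorphic homomorphisms $G\to B^{\times}$ and continuous algebra homomorphisms $\cA(G)\to B$: in one direction one needs $\cO(G)\ptn B\cong\cO(G,B)$ (nuclearity of the Fr\'echet space $\cO(G)$, which you do flag), and in the other one needs the linear span of the delta functionals to be dense in $\cA(G)$ (reflexivity of $\cO(G)$) so that multiplicativity and uniqueness follow, plus connectedness of $G$ so that a holomorphic homomorphism is determined by its differential at the identity. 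Your treatment of the corollary coincides with the paper's: $\fg$ finite dimensional, $U(\fg)$ affine, hence $\wh U(\fg)$ HFG by \cite[Proposition~7.1]{Pi15}. The trade-off is clear: the paper buys brevity by citing, while your argument makes the mechanism explicit and would make the proposition self-contained once the two steps above are written out.
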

\begin{proof}
The first assertion follows form  \cite[Proposition~9.1]{Pir_stbflat} as explained in
\cite[Proposition~2.1]{ArAMN}. Since $U(\mathfrak{g})$ is affine, $\widehat{\mathscr A}(G)\cong \widehat U(\mathfrak{g})$ is an HFG algebra
by Proposition~\ref{ANenvaff}.
\end{proof}

In Steps~2 and~3 we use auxiliary results contained in Appendices.

\subsection*{Step 2: $G$ is  connected}

\begin{pr}\label{cicHFG}
Let $G$ be a connected complex Lie group. Then  $\widehat{\mathscr A}(G)$ is an HFG  Hopf
algebra.
\end{pr}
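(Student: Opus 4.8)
The plan is to reduce the connected case to the simply-connected case already settled in Proposition~\ref{csicHFG} by passing to the universal covering group. Let $q\colon \wt G\to G$ be the universal cover: it is a holomorphic homomorphism of complex Lie groups with the same Lie algebra $\fg$, and its kernel $\Gamma:=\ker q$ is a discrete central subgroup isomorphic to $\pi_1(G)$. Since $G$ is a connected Lie group, $\Gamma$ is a \emph{finitely generated} abelian group; fix generators $\gamma_1,\dots,\gamma_k$. By Proposition~\ref{csicHFG} we have an identification $\wh\cA(\wt G)\cong \wh U(\fg)$, and the latter is HFG because $\fg$ is finite-dimensional, hence finitely generated.

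By functoriality of $\cA(-)$ and of the Arens--Michael envelope, $q$ induces a homomorphism of Hopf $\ptn$-algebras $\wh{q_*}\colon \wh\cA(\wt G)=\wh U(\fg)\to\wh\cA(G)$. I would first check that its range is dense: the image contains $U(\fg)$ together with all group-likes $\de_{\exp X}$ ($X\in\fg$), and since $G$ is connected these generate a dense subalgebra of $\cA(G)$, which is in turn dense in $\wh\cA(G)$. Next, each generator $\gamma_i$ corresponds under $\wh\cA(\wt G)\cong\wh U(\fg)$ to a group-like element $u_i$, and since $q(\gamma_i)=e$ one gets $\wh{q_*}(u_i)=\de_e=1$. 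Thus $\wh{q_*}$ annihilates every $u_i-1$ and factors through $\wh U(\fg)/\overline J$, where $J$ is the two-sided ideal generated by the finite set $u_1-1,\dots,u_k-1$ and $\overline J$ its closure (note $u_{\gamma\gamma'}-1=u_\gamma(u_{\gamma'}-1)+(u_\gamma-1)$, so $J$ already contains $u_\gamma-1$ for every $\gamma\in\Gamma$).

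The decisive point is to identify $\wh\cA(G)$ with the quotient $\wh U(\fg)/\overline J$. At the uncompleted level the pullback $q^*\colon\cO(G)\to\cO(\wt G)$ is a topological embedding whose image is the closed subspace $\cO(\wt G)^\Gamma$ of $\Gamma$-invariant functions; since $\cO(\wt G)$ and $\cO(G)$ are nuclear Fr\'echet spaces, the transpose $q_*\colon\cA(\wt G)\to\cA(G)$ is an open surjection, so $\cA(G)\cong\cA(\wt G)/\ker q_*$ topologically. The \emph{main obstacle} is to show that $\ker q_*$ equals the closed two-sided ideal $K$ generated by $\{\de_\gamma-1:\gamma\in\Gamma\}$, i.e. that every analytic functional on $\wt G$ annihilating all $\Gamma$-invariant holomorphic functions lies in $K$; this is where the structure of $\cA(\wt G)$ as a $\Gamma$-module (and, if one prefers to argue openness differently, an open mapping theorem of Pt\'ak type) enters. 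Granting it, the passage to envelopes is formal: the composite $\cA(\wt G)\to\wh U(\fg)\to\wh U(\fg)/\overline J$ kills $K$ (it kills each $\de_\gamma-1$ and $K=\overline K$), hence descends to $\cA(G)$ and, by the universal property of the envelope, extends to a continuous homomorphism $\wh\cA(G)\to\wh U(\fg)/\overline J$ that is a two-sided inverse of $\wh{q_*}$ on the dense subalgebras and therefore a topological isomorphism. Finally, $\wh U(\fg)$ is HFG and $\overline J$ is a closed two-sided ideal, so $\wh\cA(G)\cong\wh U(\fg)/\overline J$ is HFG as well: pulling a presentation $\wh U(\fg)\cong\cF_n/I$ back along $\cF_n\to\wh U(\fg)$ gives $\wh\cA(G)\cong\cF_n/I'$ with $I'$ the closed preimage of $\overline J$.
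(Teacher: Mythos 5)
Your overall architecture coincides with the paper's: pass to the universal cover $\wt G$, invoke Proposition~\ref{csicHFG} there, and use Hahn--Banach plus a Pt\'ak-type open mapping theorem to see that $q_*\colon\cA(\wt G)\to\cA(G)$ is an open surjection --- that is precisely the paper's Proposition~\ref{opensurvl} (which also records why the hypotheses of Pt\'ak's theorem hold: $\cA(\wt G)$ is a Pt\'ak space and $\cA(G)$ is barreled, via nuclearity and reflexivity of $\cO(\wt G)$, Corollary~\ref{Ptba}). The genuine gap is exactly at the point you yourself flag as decisive: the identification of $\ker q_*$ with the closed two-sided ideal $K$ generated by $\{\de_\gamma-1:\gamma\in\Gamma\}$ is \emph{assumed}, not proved (``Granting it\dots''), and your whole passage to envelopes --- the descent of $\cA(\wt G)\to\wh U(\fg)/\overline J$ through $\cA(G)$ --- hinges on it, since the composite is only shown to kill $K$, and descent requires it to kill all of $\ker q_*$. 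As written, the proposal therefore does not establish the proposition. (The claim is in fact true and fillable: the pre-annihilator of $K$ in $\cO(\wt G)$ is computed from $\langle\mu\star(\de_\gamma-1)\star\nu,f\rangle=0$, which by density of the span of delta functions forces $f(g\gamma h)=f(gh)$, i.e.\ $f\in q^*\cO(G)$; since $K$ is closed and convex in the strong dual of the reflexive space $\cO(\wt G)$, it is weakly closed, and the bipolar theorem gives $K=(q^*\cO(G))^\perp=\ker q_*$. But none of this argument appears in your text.)

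The instructive point is that the paper sidesteps the kernel computation entirely. Its Lemma~\ref{Ptba0}, resting on \cite[Corollary~6.2]{Pir_stbflat}, states: for an open surjective homomorphism $\pi\colon A\to B$ of $\ptn$-algebras with $\wh A$ Fr\'echet, one has $\wh B\cong\wh A/I$, where $I$ is the closure of the image of $\Ker\pi$ in $\wh A$ --- no explicit description of $\Ker\pi$ is needed --- and a quotient of an HFG algebra by \emph{any} closed two-sided ideal is HFG. So your insistence on the finite generating set $u_1-1,\dots,u_k-1$ (and on $\Gamma=\pi_1(G)$ being finitely generated) buys nothing for the HFG conclusion: an HFG presentation $\cF_n/I$ allows an arbitrary closed ideal $I$, as your own closing sentence in effect concedes. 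To repair the proof, either supply the bipolar argument sketched above, or simply replace the kernel identification by the citation of Lemma~\ref{Ptba0}; with the latter your argument collapses to the paper's.
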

For the proof we need a lemma.
\begin{lm}\label{Ptba0}
Let $\pi\!:A\to B$ be an open surjective homomorphism of
$\mathbin{\widehat{\otimes}}$-algebras. If $\widehat A$ is a Fr\'{e}chet algebra, so is $\widehat
B$. If $\widehat A$ is an HFG algebra, so is $\widehat B$.
\end{lm}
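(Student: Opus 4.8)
The plan is to transfer the quotient structure of $\pi$ to the Arens-Michael envelopes. By functoriality of the envelope, $\pi$ induces a continuous homomorphism $\wh\pi\colon\wh A\to\wh B$ fitting into a commuting square with $\io_A$ and $\io_B$. Since $\io_B(B)$ is dense in $\wh B$ and $\pi$ is surjective, the image $\wh\pi(\wh A)$ contains $\wh\pi(\io_A(A))=\io_B(\pi(A))=\io_B(B)$ and is therefore dense; so $\wh\pi$ always has dense image. Both assertions will follow once we know that $\wh\pi$ is in fact a surjective open map, for then $\wh B\cong\wh A/\ker\wh\pi$: a quotient of a Fréchet algebra by a closed two-sided ideal is again Fréchet, and if $\wh A\cong\cF_n/I$ then $\wh B\cong\cF_n/J$, where $J$ is the preimage of $\ker\wh\pi$ under the quotient map $\cF_n\to\wh A$, hence HFG.

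The Fréchet assertion I would obtain first, and directly, by comparing prenorms. For a continuous submultiplicative prenorm $p$ on $A$ let $\bar p$ be the quotient prenorm on $B=A/\ker\pi$, that is $\bar p(b)=\inf\{p(a):\pi(a)=b\}$; it is submultiplicative because $\pi$ is multiplicative, and it is continuous because $\pi$ is open and surjective, so $B$ carries the quotient topology. If $q$ is any continuous submultiplicative prenorm on $B$, then $q\circ\pi$ is one on $A$, and since $\wh A$ is metrizable it is dominated by $Cp$ for some such $p$; passing to the infimum over preimages gives $q\le C\bar p$. Hence, choosing an increasing sequence $(p_n)$ of submultiplicative prenorms defining the topology of $\wh A$, the countable family $(\bar p_n)$ determines the topology of $\wh B$. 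Being complete (as an Arens-Michael algebra) with a countable defining family, $\wh B$ is Fréchet.

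For the HFG assertion the real work is to upgrade the dense image of $\wh\pi$ to genuine surjectivity, and I expect this to be the main obstacle. I would realize $\wh A=\varprojlim A_n$ and $\wh B=\varprojlim B_n$ as reduced projective limits of the Banach algebras $A_n,B_n$ completing $(A,p_n)$ and $(B,\bar p_n)$. Because $\bar p_n$ is the quotient prenorm, each $f_n\colon A_n\to B_n$ is a metric surjection with kernel $K_n=\overline{N_n}$, where $N_n$ is the image of $\ker\pi$ in $A_n$; moreover the connecting map $K_{n+1}\to K_n$ carries $N_{n+1}$ onto $N_n$ and so has dense image. Thus the spectrum $(K_n)$ satisfies the Mittag-Leffler condition, whence $\varprojlim^1 K_n=0$, and the exact sequence $0\to\varprojlim K_n\to\varprojlim A_n\to\varprojlim B_n\to\varprojlim^1 K_n$ attached to the short exact sequences $0\to K_n\to A_n\to B_n\to 0$ shows that $\wh\pi=\varprojlim f_n$ is surjective. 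Alternatively, one checks that $\wh\pi$ is nearly open and invokes Pt\'ak's open mapping theorem, using that the Fréchet space $\wh A$ is fully complete. Once $\wh\pi$ is a continuous surjection of Fréchet spaces, the open mapping theorem makes it open, and the reduction of the first paragraph finishes both claims.

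The delicate step is precisely the vanishing $\varprojlim^1 K_n=0$ (equivalently, the near-openness needed for Pt\'ak): it is here that the openness of $\pi$, not merely its surjectivity, is indispensable, since it is what forces each $\bar p_n$ to be continuous and the kernels $K_n$ to be the closures of the subspaces $N_n$ whose connecting maps are onto. Everything else, namely functoriality, density of the image, and the quotient description of the HFG property, is routine.
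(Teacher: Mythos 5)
Your argument is correct, but it follows a genuinely different route from the paper's, whose entire proof is a reduction plus a citation: since $\pi$ is an open surjection, $B\cong A/\Ker\pi$ as $\ptn$-algebras, and \cite[Corollary~6.2]{Pir_stbflat} then gives $\wh B\cong\wh A/I$, where $I$ is the closure of the image of $\Ker\pi$ in $\wh A$ (this is exactly where the Fr\'echet hypothesis enters, as it makes the quotient complete); both claims then follow because a quotient of a Fr\'echet algebra is Fr\'echet and a quotient of an HFG algebra is HFG. You instead reconstitute the content of that cited corollary by hand, and every step checks out: in the Fr\'echet part, the unit ball of $\bar p$ is precisely the image of the unit ball of $p$, so openness of $\pi$ gives continuity of $\bar p$, and lifting $q$ to $q\circ\pi$ gives cofinality of the countable family $(\bar p_n)$ among all continuous submultiplicative prenorms on $B$; in the surjectivity part, $\ker f_n$ is indeed the closure of the image of $\Ker\pi$ (because $\bar p_n(\pi(a))$ is the $p_n$-distance from $a$ to $\Ker\pi$), the connecting maps of the spectrum $(K_n)$ have dense range, Mittag-Leffler kills $\varprojlim^1 K_n$, and the open mapping theorem converts the resulting continuous surjection $\wh\pi$ of Fr\'echet spaces into the quotient realization $\wh B\cong\wh A/\ker\wh\pi$, whence the $\cF_n/J$ description. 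The one substantive remark: your projective-limit machinery is heavier than necessary. Once the cofinal family $(\bar p_n)$ is in hand, the algebra $\wh A/\overline{\io_A(\Ker\pi)}$ is itself a Fr\'echet Arens-Michael algebra (quotient prenorms of submultiplicative prenorms are submultiplicative), and it verifies the universal property of the Arens-Michael envelope of $A/\Ker\pi\cong B$ directly, giving $\wh B\cong\wh A/I$ in one stroke; this universal-property argument is what the cited corollary packages. What your route buys is self-containedness and an explicit accounting of where openness of $\pi$ is indispensable (continuity of the quotient prenorms, the kernels being closures of compatibly surjecting subspaces); what the paper's route buys is brevity.
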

\begin{proof}
Evidently, $\Ker \pi$ is a closed two-sided ideal in  $A$. Since
$\pi$ is an open surjection,  we have $A/\Ker \pi \cong B$. If~$\widehat A$ is a Fr\'{e}chet algebra, then it follows from \cite[Corollary~6.2]{Pir_stbflat}  that $\widehat B\cong \widehat A/I$, where $I$ is the closure of the
image of $\Ker \pi$ in $\widehat A$. Finally, note that a quotient of a
Fr\'{e}chet algebra is a Fr\'{e}chet algebra and a quotient of an
HFG algebra is~HFG.
\end{proof}

\begin{proof}[Proof of Proposition~\ref{cicHFG}]
Let $G$ be a connected complex Lie group, $\widetilde G$  its universal covering and
$\pi\!:\widetilde G\to G$  the quotient homomorphism. Then
$\widetilde G$ is a complex Lie group and $\pi$ is a Lie group
homomorphism \cite[Chapitre~III, \S~1, no.~9]{Bo06}. So there is a
closed normal subgroup $N$ in $\widetilde G$ such that $G\cong\widetilde G/N$. Since $\widetilde G$ is connected, it is countable at infinity. By Proposition~\ref{opensurvl}, the corresponding $\mathbin{\widehat{\otimes}}$-algebra
homomorphism $\varphi'\!:{\mathscr A}(\widetilde G)\to{\mathscr A}(G)$ is
surjective and open.  Since $\widetilde G$ is simply connected, it follows
from  Proposition~\ref{csicHFG} that $\widehat{\mathscr A}(\widetilde G)$ is an
HFG algebra. Finally, we apply Lemma~\ref{Ptba0} to $\varphi'$.
\end{proof}

\subsection*{Step 3:  $G$ is compactly generated}

Let $G$ be a compactly generated complex Lie group and  $G_0$ denote the  component of identity. Then $\Ga\!:=G/G_0$ is a finitely generated discrete group (because $G_0$ is open).  We fix generators $\ga_1,\ldots, \ga_m$ of
$\Ga$  and their preimages $g_1,\ldots, g_m$ under $G\to \Ga$. Let~$\Ga_1$ be the subgroup in~$G$ generated by  $g_1,\ldots, g_m$.

For $g\in G$,  denote by $\de_g$ the corresponding delta function;
obviously, $\de_g\in {{\mathscr A}}(G)$. The multiplication in ${{\mathscr A}}(G)$ is denoted
by $\star$. The support $\mathop{\mathrm{supp}}\nu$ of $\nu\in {{\mathscr A}}(G)$ is defined in the usual way.

\begin{lm}\label{sCgA}
For each $\nu\in {{\mathscr A}}(G)$ there is a finite subset $S$ in $\Ga_1$ such that
$$
\nu=\sum_{g\in S} \de_{g}\star \nu_g ,
$$
where $\mathop{\mathrm{supp}}\nu_g\subset G_0$.
\end{lm}
\begin{proof}
Each coset of $G_0$ is an open subset in $G$; hence it is a complex manifold.
So we can write $\mathcal{O}(G)\cong\prod_{\ga\in \Ga} \mathcal{O}(\ga)$. Since the strong
dual of a product is topologically isomorphic to the direct sum of the strong
duals \cite[\S 22.5]{Kot1}, we have
$$
{{\mathscr A}}(G)=\mathcal{O}(G)'\cong\left(\prod_{\ga\in \Ga}
\mathcal{O}(\ga)\right)'\cong \bigoplus_{\ga\in \Ga}{{\mathscr A}}(\ga)\,.
$$
Hence, for each $\nu\in {{\mathscr A}}(G)$  there are  a finite subset $S_0$  in
$\Ga$  and  $\{\eta_\ga \in {{\mathscr A}}(G):\,\ga\in S_0\}$  with $\mathop{\mathrm{supp}}\eta_\ga\subset \ga$ such that $\nu=\sum_{\ga\in
S_0} \eta_\ga$. For each
$\ga\in S_0$ fix $g_\ga\in\Ga_1\cap \ga$, denote the set of all such~$g_\ga$ by~$S$ and put $\nu_{g_\ga}\!:=\de_{g_\ga^{-1}}\star\eta_\ga$. Hence $\mathop{\mathrm{supp}} \nu_{g_\ga}\subset G_0$ and the proof is complete.
\end{proof}

For a discrete semigroup $S$ we denote by  $\mathbb{C} S$ the semigroup algebra of~$S$
with respect to the convolution multiplication. Note that $\mathbb{C} S$ is a $\mathbin{\widehat{\otimes}}$-algebra with respect to the strongest
locally convex topology. Topologically, it
is a direct sum of copies of $\mathbb{C}$. So if $S$ is countable, then  $\mathbb{C} S$ is
a nuclear (DF)-space. (Note that when $S$ is locally finite, the Arens--Michael envelope $\widehat{\mathbb{C} S}$  coincides with $\mathbb{C} S$ and so it is also a nuclear (DF)-space \cite{Ar22} but we do not need this fact here.)

\begin{proof}[The proof of implication $\mathrm{(iii)}\Longrightarrow(\mathrm{i)}$ in Theorem~\ref{cicHFGgen}]
Let $G_0$ be the component of identity. Consider the finitely-generated discrete group $\Ga_1$ chosen above. The holomorphic homomorphisms $\Ga_1\to G$ and $G_0\to G$ induce the continuous $\mathbin{\widehat{\otimes}}$-algebra homomorphisms $\mathbb{C} \Ga_1\to {{\mathscr A}}(G)$ and   ${{\mathscr A}}(G_0)\to {{\mathscr A}}(G)$.  By the universal property of the free product of $\mathbin{\widehat{\otimes}}$-algebras (see Appendix~\ref{ap:free}), we have the continuous homomorphism
$$
\pi\!:\mathbb{C} \Ga_1\mathbin{\ast}{{\mathscr A}}(G_0)\to {{\mathscr A}}(G)\,.
$$
By Lemma~\ref{Ptba0}, to prove  that  $\widehat{{\mathscr A}}(G)$ is HFG it suffices to show that $(\mathbb{C} \Ga_1\ast {{\mathscr A}}(G_0))\sphat\,\,$ is an HFG algebra and that~$\pi$ is surjective and open.

It follows from Proposition~\ref{frpris} that $(\mathbb{C} \Ga_1\ast {{\mathscr A}}(G_0))\sphat\,\,$ is isomorphic to $\widehat{\mathbb{C}\Ga}_1\mathbin{\widehat\ast} \widehat{{\mathscr A}}(G_0)$, where ${\widehat\ast}$ stands for the free product of Arens--Michael algebras. Since $\mathbb{C} \Ga_1$ is a finitely
generated algebra, $\widehat{\mathbb{C} \Ga}_1$ is HFG \cite[Proposition~7.1]{Pi15}.  On the other hand, $G_0$ is connected, so
$\widehat{{\mathscr A}}(G_0)$ is HFG by Proposition~\ref{cicHFG}. Since the Arens--Michael
free product of two HFG algebras is HFG \cite[Corollary~4.7]{Pi15}, we have that $(\mathbb{C} \Ga_1\ast {{\mathscr A}}(G_0))\sphat\,\,$ is~HFG.

On the other hand,  note that $\mathbb{C}^{\Ga_1}$ and $\mathcal{O}(G_0)$  are nuclear Fr\'{e}chet space (the latter follows from Lemma~\ref{Ptbam}). Hence the strong dual spaces  $\mathbb{C} \Ga_1$  and ${{\mathscr A}}(G_0)$  are nuclear (DF)-spaces by \cite[p.\,84, Theorem 4.4.13]{Pie}. Then so is   $\mathbb{C} \Ga_1\ast
{{\mathscr A}}(G_0)$  by Proposition~\ref{ga1APt}.  Being a strong dual of a reflexive
Fr\'{e}chet space, it is a Pt\'{a}k space  \cite[\S\,34.3(5)]{Kot2}.
It follows from Lemma~\ref{sCgA}  that $\pi$ is surjective.
Proposition~\ref{Ptba}  implies that  ${{\mathscr A}}(G)$  is  barreled. So, by  Pt\'{a}k's
open mapping theorem, $\pi$ is open  (see the references in the proof of Proposition~\ref{opensurvl}).
This completes the proof.
\end{proof}

Now we turn to the implication $\mathrm{(ii)}\Longrightarrow(\mathrm{iii)}$ in
Theorem~\ref{cicHFGgen}.

The case of a countable discrete group is basic. In the discussion below it is convenient to consider groups as semigroups (even not necessarily with an identity). So we need a description of $\widehat{\mathbb{C} S}$ for a countable discrete semigroup~$S$.
(Note that $\mathbb{C} S$ is not unital unless~$S$ has an identity.)

Recall that a  \emph{length function} on a locally compact semigroup
$S$ is a locally bounded function $\ell\!:S\to \mathbb{R}$ such that
$$
 \ell(st)\le \ell(s)+\ell(t)\qquad (s, t \in S)\,.
$$

Suppose that $S$ is a countable discrete semigroup with generating subset $\{s_n\!:n\in\mathbb{N}\}$. The following construction is known in the abelian case; see, e.g.,  \cite{Pl13}.  For any function $F\!:\mathbb{N}\to \mathbb{R}_+$ put
\begin{equation}\label{locwordlen}
\ell_F(s)\!: = \inf\left\{\sum_{k=1}^m F(n_k)\!: \,
s=s_{n_1}\cdots s_{n_m}\!:\, n_1,\ldots, n_m \in
\mathbb{N}\right\}.
\end{equation}
(Here as above $\de_s$ denotes the delta function at $s$.)
It is easy to see that $\ell_F $ is a non-negative length function.
We write any element~$a$ of $\mathbb{C} S$ as a finite sum $\sum_{s\in S}a_s\de_s$
and put
$$
\|a\|_F\!:=\sum_{s\in S}|a_s|\,e^{\ell_F(s)}.
$$
 Obviously,  $\|\cdot\|_F$  is a submultiplicative seminorm on $\mathbb{C} S$. Denote by $(\mathbb{C} S)_F$ the completion of $\mathbb{C} S$ with respect to $\|\cdot\|_F$. Then we have an algebra homomorphism $\mathbb{C} S\to (\mathbb{C} S)_F$.

We order non-negative functions on $\mathbb{N}$  by increasing. It is easy to see that for $F_1\le F_2$ there is a continuous homomorphism $(\mathbb{C} S)_{F_2}\to (\mathbb{C} S)_{F_1}$ of Banach algebras. So we can consider a directed projective system $((\mathbb{C} S)_F)$ of unital Banach
algebras, where~$F$ runs over all possible non-negative functions on~$\mathbb{N}$.

\begin{lm}\label{prAga}
If $S$ is a countable discrete semigroup with  generating subset $\{s_n\!:n\in\mathbb{N}\}$, then
the $\mathbin{\widehat{\otimes}}$-algebra homomorphism
$$
\mathbb{C} S\to \varprojlim((\mathbb{C} S)_F\!:\, F\in \mathbb{R}_+^\mathbb{N})
$$
is an Arens--Michael envelope.
\end{lm}
(Here we consider the projective limit in the category of locally convex spaces, which automatically the projective limit in the category of Arens--Michael algebras.)
\begin{proof}
It suffices to show that any submultiplicative seminorm~$\|\cdot\|$ on~$\mathbb{C} S$ can be majorized by a seminorm of the form $\|\cdot\|_F$.

Fix $\|\cdot\|$  and put $F(n)\!:=|\log\|\de_{s_n}\|\,|$.
If $s=s_{n_1}\cdots s_{n_m}$ as in~\eqref{locwordlen}, then
$$
\|\de_s\|\le\|\de_{s_{n_1}}\|\cdots \|\de_{s_{n_m}}\|\le\exp\left(\sum_k |\log\|\de_{s_{n_k}}\|\,|\right) \,.
$$
Therefore $\|\de_s\|\le e^{\ell_F(s)}$ for
each  $s\in S$ and so $\|a\|\le \|a\|_F$ for every
$a\in\mathbb{C} S$.
\end{proof}

For a fixed generating subset $\{s_n\!:n\in\mathbb{N}\}$ of~$S$
denote by $S_n$ the subsemigroup generated by $s_1,\ldots,s_n$.

\begin{lm}\label{arbtCnF}
If $s_n\notin S_{n-1}$ for each $n\in\mathbb{N}$, then
for any sequence $(C_n)$ in~$\mathbb{R}_+$ there is a non-negative function $F$ such that $\ell_F(s_n)>C_n$ for each $n\in\mathbb{N}$.
\end{lm}
\begin{proof}
Take $F$ such that $F(n)>\max\{C_1,\ldots, C_n\}$. If $n\in\mathbb{N}$, then for any decomposition $s_n=s_{n_1}\cdots s_{n_m}$ there is $j$ such that $s_{n_j}\notin S_{n-1}$. So we have from~\eqref{locwordlen} that
$$
\ell(s_n)\ge \min\{F(m)\!:\, m\ge n\}>C_n.
$$
\end{proof}

\begin{pr}\label{Frfgen}
Let $S$ be a countable discrete semigroup. Then
$\widehat{\mathbb{C} S}$ is a Fr\'echet space if and only if~$S$ is finitely generated.
\end{pr}
\begin{proof}
The sufficiency is straightforward; cf. \cite[Proposition~7.1]{Pi15}.

To prove the necessity suppose that $\widehat{\mathbb{C} S}$ is a Fr\'echet space and
assume  to the contrary that~$S$~is not finitely generated.
For a sequence $\{s_n\!:n\in\mathbb{N}\}$ in~$S$ we denote as above by $S_n$  the subsemigroup generated by
$s_1,\ldots,s_n$. It is easy to see by induction that   $\{s_n\!:n\in\mathbb{N}\}$ can be chosen with $s_n\not\in S_{n-1}$ for each $n\in\mathbb{N}$.

By Lemma~\ref{prAga}, the  system $(\|\cdot\|_F)$, where~$F$ runs all non-negative functions on~$\mathbb{N}$, is defining
for $\widehat{\mathbb{C} S}$.  Since $\widehat{\mathbb{C} S}$ is a Fr\'echet space,
there is an increasing sequence $(F_n)$ of functions such that
$(\|\cdot\|_{F_n})$ is a defining system.

Now we use Cantor's diagonal argument to get a contradiction.
It follows from Lemma~\ref{arbtCnF} that there is
a function~$F$ such that
\begin{equation}\label{Fnmax}
\ell_F(s_n)> n+\ell_{F_n}(s_n)
\end{equation}
for every $n$.
Since $(\|\cdot\|_{F_n})$ is a defining system and the sequence $(F_n)$ is increasing, there are $k$ and $C>0$ such that $\|a\|_F\le C\|a\|_{F_ n}$ for each $a\in\mathbb{C} S$ and $n\ge k$. Putting $a=\de_s$, we have $\ell_F(s)\le \ell_{F_n}(s) +\log C$ for every $s\in S$.  Combining this with \eqref{Fnmax},
we conclude that $\log C>n$ when~$n\ge k$ and get a contradiction, which completes the proof.
\end{proof}

Thus, for a finitely generated group $G$, to determine the topology on the Arens--Michael envelope it suffices to take an unbounded countable set of length functions or even the set of integer multiples of a word length function.
The algebra defined in this way is considered in \cite{Me04,Me06}, where it is denoted by $\mathscr{O}(G)$. By Lemma~\ref{prAga}, this algebra  coincides with $\widehat{\mathbb{C} G}$.

\begin{proof}[The rest of the proof of Theorem~\ref{cicHFGgen}]
\,

$\mathrm{(ii)}\Longrightarrow(\mathrm{iii)}$.
Suppose that $\widehat{\mathscr A}(G)$ is a Fr\'echet space. Denote as above the component of $G$ by $G_0$ and the quotient group $G/G_0$ by~$\Ga$.
Since $G$ is countable at infinity, Proposition~\ref{opensurvl}
implies that ${\mathscr A}(G)\to {\mathscr A}(\Ga)$ is surjective and open. So, by
Lemma~\ref{Ptba0}, $\widehat{\mathscr A}(\Ga)$ is a Fr\'echet space.
Recall that ${\mathscr A}(\Ga)=\mathbb{C}\Ga$. Since~$\Ga$ is countable, Proposition~\ref{Frfgen} implies that~$\Ga$ is finitely generated. Therefore~$G$ is compactly generated.

$\mathrm{(i)}\Longrightarrow(\mathrm{ii)}$. Any HFG algebra is a Fr\'echet algebra by definition.
\end{proof}

\subsection*{Cocommutative  Hopf HFG algebras that are not associated with Lie groups}

The natural question arises whether all cocommutative  HFG Hopf algebras are of the form $\widehat{\mathscr A}(G)$ for some complex Lie group~$G$. In this section we show that this is not the case. Examples come from finitely generated (not necessarily finite-dimensional)  complex Lie algebras. Indeed, if
$\mathfrak{g}$ is such a Lie algebra, then $U(\mathfrak{g})$ is finitely generated as an algebra, hence, an affine Hopf algebra. Thus $\widehat U(\mathfrak{g})$ is a cocommutative  Hopf HFG algebra by Proposition~\ref{ANenvaff}.

\begin{thm}\label{fgenLiea}
Let $\mathfrak{g}$ be a finitely generated complex Lie algebra and $G$ a complex Lie group countable at infinity. If there is a Hopf $\mathbin{\widehat{\otimes}}$-algebra isomorphism $\widehat U(\mathfrak{g})\to \widehat{\mathscr A}(G)$,  then   $\mathfrak{g}$ is  finite dimensional.
\end{thm}

We need two auxiliary results on primitive elements. As in the case of a Hopf algebra, we say that
an element~$X$ of a Hopf $\mathbin{\widehat{\otimes}}$-algebra~$H$ is \emph{primitive} if $\De(X)=1\otimes X+X\otimes 1$ and denote
the linear subspace of  primitive elements by~$P(H)$.

\begin{pr}\label{primfordis}
If $\Ga$ is a countable discrete group, then $P(\widehat{\mathbb{C}\Ga})=0$.
\end{pr}
\begin{proof}
Since  $\mathbb{C}\Ga\otimes\mathbb{C}\Ga \cong \mathbb{C}(\Ga\times \Ga)$, we have from the compatibility of the Arens--Michael enveloping functor with the projective tensor product \cite[Proposition~6.4]{Pir_stbflat} that
$$
\widehat{\mathbb{C}\Ga}\mathbin{\widehat{\otimes}} \widehat{\mathbb{C}\Ga}\cong \widehat{\mathbb{C}(\Ga\times\Ga)}.
$$

It follows from Lemma~\ref{prAga} that $\widehat{\mathbb{C}\Ga}$ is the K\"{o}the sequence space corresponding to the K\"{o}the set $P\!:=\{e^{\ell_F}\}$, where~$F$ runs over all possible non-negative functions on a fixed subset generating~$\Ga$ as a semigroup and  $\ell_F$ is defined in~\eqref{locwordlen}. Obviously, the same is true
for $\widehat{\mathbb{C}(\Ga\times\Ga)}$. In particular, each element of $\widehat{\mathbb{C}\Ga}$ ($\widehat{\mathbb{C}\Ga}\mathbin{\widehat{\otimes}} \widehat{\mathbb{C}\Ga}$, resp.) can be uniquely decomposed as a convergent series in the basis $\{\de_\ga\!:\ga\in\Ga\}$ ($\{\de_\ga\otimes \de_{\ga'}\!:\ga,\ga'\in\Ga\}$, resp.).

For $X\in\widehat{\mathbb{C}\Ga}$ write $X=\sum_{\ga\in\Ga} c_\ga \de_\ga$. Hence
$\De(X)=\sum_{\ga\in\Ga} c_\ga \de_\ga\otimes\de_\ga$ and
$$
1\otimes X+X\otimes 1=\sum_{\ga\in\Ga} c_\ga (\de_e\otimes\de_\ga+\de_\ga\otimes\de_e),
$$
where $e$ is the identity element. If $\De(X)=1\otimes X+X\otimes 1$, then $c_\ga=0$ for all $\ga$ and thus $X=0$.
\end{proof}

\begin{pr}\label{connprim}
If $G$ is a connected complex Lie group, then $P(\widehat{\mathscr A}(G))$ is finite dimensional.
\end{pr}
\begin{proof}
Since $G$ is connected, it is compactly generated and so $\widehat{\mathscr A}(G)$ can be identified with the Hopf $\mathbin{\widehat{\otimes}}$-algebra ${\mathscr A}_{exp}(G)$ of exponential analytic functionals. Furthermore, ${\mathscr A}_{exp}(G)$ is the strong dual of the Hopf $\mathbin{\widehat{\otimes}}$-algebra $\mathcal{O}_{exp}(G)$
of holomorphic functions of exponential type. For details see Section~\ref{Cocoexm} below.

If $X$ is a primitive element of ${\mathscr A}_{exp}(G)$, then
$$
\langle X, f_1f_2\rangle=\langle \De(X), f_1\otimes f_2\rangle= f_1(e) \langle X, f_2\rangle+f_2(e)\langle X, f_1\rangle
$$
for every $f_1,f_2\in \mathcal{O}_{exp}(G)$, i.e., $X$ belongs to $\mathop{\mathrm{Der}}(\mathcal{O}_{exp}(G),\mathbb{C})$, the set of continuous
point derivations of $\mathcal{O}_{exp}(G)$ at the identity. On the other hand, it is obvious that any continuous point derivation is a primitive element of~$\widehat{\mathscr A}(G)$. Hence $P(\widehat{\mathscr A}(G))=\mathop{\mathrm{Der}}(\mathcal{O}_{exp}(G),\mathbb{C})$.

Note that we can assume that  $G$ is linear \cite[Theorem~5.3(A)]{Ar19}. So $G$  can be considered as a non-singular affine variety. Then $\mathcal{O}_{exp}(G)$ contains the algebra ${\mathcal R}(G)$ of regular functions on~$G$  [ibid., Theorem~5.10] and the algebra homomorphism  ${\mathcal R}(G)\to\mathcal{O}_{exp}(G)$  has dense range [ibid., Corollary~5.11] (see a short proof in \cite{ArDP}). Therefore the induced linear map
$$
\mathop{\mathrm{Der}}(\mathcal{O}_{exp}(G),\mathbb{C})\to\mathop{\mathrm{Der}}({\mathcal R}(G),\mathbb{C})
$$
is injective. Here ${\mathcal R}(G)$ is endowed with
the strongest locally convex topology and so $\mathop{\mathrm{Der}}({\mathcal R}(G),\mathbb{C})$ is just the
space of all point derivations  at the identity. Since~$G$  contains no singular point, the latter space can be identified with the tangent space at identity, which is finite dimensional.
\end{proof}

\begin{proof}[Proof of Theorem~\ref{fgenLiea}]
Since $G$  is countable at infinity, $\Ga\!:=G/G_0$ is countable. Then, by
Proposition~\ref{opensurvl}, the homomorphism ${\mathscr A}(G)\to {\mathscr A}(\Ga)$ is surjective. Therefore the induced homomorphism $\widehat{\mathscr A}(G)\to \widehat{\mathscr A}(\Ga)$ has dense range.\footnote{In fact, it is surjective but we do not need this fact.}

Recall that by the assumption, $\widehat U(\mathfrak{g})\cong \widehat{\mathscr A}(G)$. Note that the subalgebra of $\widehat U(\mathfrak{g})$  generated by primitive elements is dense and a Hopf $\mathbin{\widehat{\otimes}}$-algebra homomorphism maps primitive elements to primitive. Therefore the subalgebra of $\widehat{\mathscr A}(\Ga)$ generated by primitive elements is dense. It follows from Proposition~\ref{primfordis} that $\Ga$ is trivial; hence $G$ is connected.  So we can apply Proposition~\ref{connprim}, which implies that $P(\widehat{\mathscr A}(G))$ is finite dimensional.
Since each element of $\mathfrak{g}$ is primitive in $\widehat U(\mathfrak{g})$, we get that $\mathfrak{g}$ is finite dimensional also.
\end{proof}

\begin{exm}\label{freeLa}
Let $\mathfrak{f}_n$ be a free Lie algebra in~$n$ generators with $n>1$. Since $\mathfrak{f}_n$ is finitely generated but not finite dimensional,  Theorem~\ref{fgenLiea} implies that the  HFG Hopf algebra $\widehat{U}(\mathfrak{f}_n)$ cannot be represented as $\widehat{\mathscr A}(G)$, where~$G$ is a complex Lie group countable at infinity.
\end{exm}

 \section{Cocommutative  HFG Hopf algebras\\ of the form $\widehat {\mathscr A}(G)$: Examples}
\label{Cocoexm}
In this section we consider  examples of a cocommutative  HFG Hopf algebra $\widehat{\mathscr A}(G)$ associated with a compactly generated complex Lie group~$G$ and describe the structure of $\widehat {\mathscr A}(G)$ in the case when~$G$ is connected.

 \subsection*{Non-connected groups}
We begin with two examples in which~$G$ is not connected and consider first a discrete group. While the description of $\widehat{\mathscr A}(\Ga)$ for a countable discrete group~$\Ga$ in the power series  form is available (see Lemma~\ref{prAga}), a representation its elements as matrix-valued functions is interesting from point of view of the noncommutative geometry.

 \begin{exm}
Consider the free product $\Ga\:=\mathbb{Z}_2\ast\mathbb{Z}_2$, i.e., $\Ga$ has generators~$u$ and~$v$ of order~$2$. Note that $\Ga$~is also generated by $u$ and $w\!:=uv$ and hence is isomorphic to $\mathbb{Z}_2\ltimes\mathbb{Z}$, where~$w$ generates~$\mathbb{Z}$ and~$\mathbb{Z}_2$ acts on~$\mathbb{Z}$ by $uwu=w^{-1}$.
Here we describe the  Hopf HFG algebra $\widehat{\mathbb{C}\Ga}$ as an algebra of matrix-valued holomorphic functions.

Since each element of  $\Ga$ has the forms $w^n$ or $uw^n$ ($n\in\mathbb{Z}$),
an arbitrary element of  $\mathbb{C}\Ga$ can be written as a finite sum
$$
a=\sum_{n\in\mathbb{Z}} (a_n w^n  +b_n uw^n),
$$
where $a_n,b_n\in\mathbb{C}$.

Consider the homomorphism
$$
\varphi\!:\mathbb{C}\Ga\to \mathrm{M}_2(\mathbb{C}[z,z^{-1}])\!:u\mapsto \begin{pmatrix}0 &
1\\1& 0
\end{pmatrix},\quad w\mapsto\begin{pmatrix}z & 0\\0& z^{-1}
\end{pmatrix}.
$$
Since for any $\rho>0$ the formula
$$
|\sum_{n\in\mathbb{Z}}a_nz^n|_\rho\!:= \sum_{n\in\mathbb{Z}}|a_n|\rho^{|n|}
$$
defines a submultiplicative seminorm on $\mathbb{C}[z,z^{-1}]$, it follows that
$$
\begin{pmatrix}f_{11} & f_{12} \\
f_{21} &  f_{22}
\end{pmatrix}  \mapsto   |f_{11}|_\rho+| f_{12}|_\rho+|f_{21} |_\rho+|f_{22} |_\rho
$$
is a submultiplicative seminorm on  $\mathrm{M}_2(\mathbb{C}[z,z^{-1}])$.\footnote{Here the seminorm of the identity is not equal to~$1$; it does not impair the argument that follows.} Denote
by $\|\cdot\|_\rho$ its restriction on~$\mathbb{C}\Ga$.
Note that for any $a\in\mathbb{C}\Ga$ we have
$$
\varphi(a)=\sum_{n\in\mathbb{Z}}
\begin{pmatrix}a_nz^n & b_nz^{-n}\\
b_nz^{n}&   a_nz^{-n}
\end{pmatrix};
$$
therefore  $\|f\|_\rho=2\sum_{n\in\mathbb{Z}}(|a_n|+|b_n|)\rho^{|n|} $.

It is easy to see that for any submultiplicative seminorm   $\|\cdot\|$ on $\mathbb{C}\Ga$ we have $\|a\|\le \|a\|_\rho/2$ for some $\rho>0$. Thus $\widehat{\mathbb{C}\Ga}$ is the completion   of the range of~$\varphi$ in
$\mathrm{M}_2(\mathcal{O}(\mathbb{C}^\times))$.

Consider the automorphism of $\mathrm{M}_2(\mathcal{O}(\mathbb{C}^\times))$ of order~$2$ defined by
$$
\si\!:\begin{pmatrix}f_{11} & f_{12} \\
f_{21} &  f_{22}
\end{pmatrix}\mapsto
\begin{pmatrix}\check f_{22} &\check f_{21} \\
\check f_{12} &\check  f_{11}
\end{pmatrix},
$$
where $\check f(z)=f(z^{-1})$. So we finally have  that
$$
\widehat{\mathbb{C}\Ga}\cong\{b\in \mathrm{M}_2(\mathcal{O}(\mathbb{C}^\times))\!: \si(b)=b\}.
$$
Identifying $\mathrm{M}_2(\mathcal{O}(\mathbb{C}^\times))$ with $\mathcal{O}(\mathbb{C}^\times,\mathrm{M}_2)$ we note that all matrix-valued  functions contained in $\widehat{\mathbb{C}\Ga}$ commute with~$u$ (equivalently, are  diagonal  with respect to the basis of eigenvectors of~$u$) at the points~$1$ and~$-1$.

Note that $\mathbb{C}\Ga$ can be described as the universal algebra generated by two idempotents, which can be identified with $(1-u)/2$ and $(1-v)/2$. Banach algebras generated by two idempotents are investigated in the theory of Banach PI-algebras, see, e.g., \cite{RS11}.
\end{exm}

The second example is a modification of the first.
\begin{exm}\label{Z2Zsd}
Consider the semi-direct product $\mathbb{Z}_2\ltimes\mathbb{C}$ of Lie groups, where~$\mathbb{Z}_2$ (with generator~$u$) acts on~$\mathbb{C}$ by $u\cdot z=-z$. It is not hard to see that $\widehat{\mathscr A}(\mathbb{Z}_2\ltimes\mathbb{C})$ is isomorphic to a subalgebra of
$\mathcal{O}(\mathbb{C}, \mathrm{M}_2)$.
The details are left to the reader.
\end{exm}

\subsection*{Simply-connected  nilpotent  groups}

In this section we give an explicit description of $\widehat{\mathscr A}(G)$ in the case when~$G$ is simply connected  and nilpotent and also consider an example.
The case of an arbitrary connected complex Lie group is discussed in the next section.

Let $G$ be a simply-connected  nilpotent complex Lie group and~$\mathfrak{g}$ its Lie algebra.
It follows from Proposition~\ref{ANenvaff} that $\widehat U(\mathfrak{g})$ is an HFG  Hopf algebra. Since   by Proposition~\ref{csicHFG} $\widehat{\mathscr A}(G)\cong \widehat U(\mathfrak{g})$, it suffices to describe $\widehat U(\mathfrak{g})$. Such a description is given in the author's paper \cite[Theorem~1.1]{ArAMN}. Here we modify it and write elements of $\widehat U(\mathfrak{g})$ in a slightly different but more concrete form.

To do this consider the decreasing filtration  $\mathscr{F}$ on~$\mathfrak{g}$ defined  by $\mathfrak{g}_1\!:=\mathfrak{g}$ and $\mathfrak{g}_i\!:=[\mathfrak{g}, \mathfrak{g}_{i-1}]$, i.e.,  the lower central series of~$\mathfrak{g}$.
For any linear
basis $e_1,\ldots, e_m$ of~$\mathfrak{g}$ we put
\begin{equation}\label{widef}
w_i\!:=\max\{j:\,e_i\in \mathfrak{g}_j \}\quad\text{and}\quad
w(\alpha)\!:=\sum_i w_i\alpha_i \,,
\end{equation}
where $\alpha=(\alpha_1,\ldots,\alpha_m)\in\mathbb{Z}_+^m$. If $w_i\le w_{i+1}$  for all $i$
and $\mathfrak{g}_j = \mathrm{span}\{e_i : w_i \ge j\}$ for all $j$, then
$(e_i)$ is called an
\emph{$\mathscr{F}$-basis}.  (This terminology is used in
\cite{Go78,Go79,Pir_stbflat,ArAMN}.) Consider the PBW-basis $(e^\al\,:\al\in\mathbb{Z}_+^m)$  in
$U(\mathfrak{g})$ associated with an $\mathscr{F}$-basis $(e_i)$ and denote by $[U(\mathfrak{g})]$  the set of formal
series with respect to $(e^\al)$.

\begin{thm}\label{nilpenv}
If $G$ is a simply-connected  nilpotent complex Lie group and $e_1,\ldots, e_m$ is  an
$\mathscr{F}$-basis in its Lie algebra~$\mathfrak{g}$, then the natural Hopf $\mathbin{\widehat{\otimes}}$-algebra homomorphism $U(\mathfrak{g})\to {\mathscr A}(G)$ is extended to a topological isomorphism of locally convex spaces
\begin{equation}\label{Ufgclear}
 \Bigl\{ a=\sum_{\alpha\in\mathbb{Z}_+^m} c_\alpha
e^\alpha\in [U(\mathfrak{g})]\! : \| a\|_r\!:=\sum_\alpha |c_\alpha|
\frac{r^{\al_1+\cdots+\al_m}}{(\alpha_1)!^{w_1-1}\cdots
(\alpha_m)!^{w_m-1}}<\infty \;\forall r>0\Bigr\}\,
\end{equation}
and $\widehat{\mathscr A}(G)$.
\end{thm}

Note that when, in addition, $G$ is abelian, we have $w_1=\cdots=w_m=1$ and then $\widehat{\mathscr A}(G)$
is just the algebra of all entire functions in $m$~variables.

In the argument, we use the space of exponential
analytic functionals introduced in \cite{Ak08}. We first recall what is the space of holomorphic functions of
exponential type. A \emph{submultiplicative weight} on a~locally compact group~$G$ is a locally
bounded function $\om$ on~$G$ such that $\om(gh)\le \om(g)\,\om(h)$ for $g,h\in G$.
A~holomorphic function~$f$ on a complex Lie group~$G$ is said to
be of \emph{exponential type} if there is a submultiplicative
weight~$\om$ satisfying  $|f(g)|\le \om(g)$ for all $g\in G $. The
linear space  of all holomorphic functions  of
exponential type on~$G$ is denoted by $\mathcal{O}_{exp}(G)$
\cite[Section~5.3.1]{Ak08}.

In what follows we consider only  compactly generated complex Lie groups. In this case it is natural to endow
$\mathcal{O}_{exp}(G)$ with an inductive topology. To be precise, for any
locally bounded function
$\up\!:G\to [1,+\infty)$ put
\begin{equation*}
\mathcal{O}_\up(G)\!:=\Bigl\{ f\in\mathcal{O}(G) \!: |f|_\up\!:=\sup_{g\in
G}{\up(g)}^{-1}{|f(g)|}<\infty\Bigr\}.
\end{equation*}
Then we consider $\mathcal{O}_{exp}(G)$ as the inductive limit of the system of Banach spaces $\mathcal{O}_\om(G)$, where $\om$ runs over submultiplicative weights.

Following \cite{Ak08} we denote the strong dual space of $\mathcal{O}_{exp}(G)$
by ${\mathscr A}_{exp}(G)$ and call its elements \emph{exponential
analytic functionals}. Both $\mathcal{O}_{exp}(G)$ and ${\mathscr A}_{exp}(G)$ are Hopf
$\mathbin{\widehat{\otimes}}$-algebras; see \cite[Theorem 5.12]{Ak08} or \cite[Propositions 3.1 and Proposition 3.3]{Ar20+B}. The natural embedding
$\mathcal{O}_{exp}(G)\to \mathcal{O}(G)$ is a Hopf
$\mathbin{\widehat{\otimes}}$-algebra homomorphism and so is the dual map ${\mathscr A}(G)\to {\mathscr A}_{exp}(G)$.

We need the following result of Akbarov.
\begin{thm}\label{AMEAG}
\cite[Theorem 6.2]{Ak08} If $G$ is a compactly generated complex Lie group,
then the natural homomorphism ${\mathscr A}(G)\to {\mathscr A}_{exp}(G)$ is an Arens--Michael envelope.
\end{thm}

We also need the predual Banach space of $\mathcal{O}_\up(G)$ defined above.
Let $\up\!:G\to [1,+\infty)$ be a locally bounded function. Following \cite{Ak08} (see also \cite{ArAMN}) we denote by~$V_\up$  the closure of the absolutely convex
hull of
$$
\{\up(g)^{-1}\de_g:\,g\in G\}
$$
in ${\mathscr A}(G)$.  Let $\|\cdot\|_{\up}$  be the Minkowski functional of
$V_\up$.
We denote by  ${{\mathscr A}}_\up(G)$  the completion of ${{\mathscr A}}(G)$ with respect to
$\|\cdot\|_{\up}$ and by  ${{\mathscr A}}_{\up^\infty}(G)$  the completion of ${{\mathscr A}}(G)$ with respect to the
sequence  $(\|\cdot\|_{\up^n};\, n\in\mathbb{N})$ of seminorms, where $\up^n(g):= \up(g)^n$.

In the following lemma we use
the power series algebra  $\mathfrak{A}_s$ ($s\ge 0$) defined in~\eqref{faAsdef}.

\begin{lm}\label{1dimdsp}
 If $\up(z)\!:=\exp(|z|^{1/p})$ for some $p\ge 1$, then
${\mathscr A}_{\up^\infty}(\mathbb{C})$ is topologically isomorphic to the power
series algebra $\mathfrak{A}_{p-1}$.
\end{lm}
\begin{proof}
We put $\mathcal{O}_{\up^\infty}(\mathbb{C})\!:=\bigcup_{n\in\mathbb{N}}
\mathcal{O}_{\up^n}(\mathbb{C})$ and consider this space with the inductive limit topology. It is easy to see that the space ${\mathscr A}_{\up^\infty}(\mathbb{C})$  is nuclear and hence reflexive.  
Then by \cite[Lemma~2.11]{ArAMN},  the pairing between $\mathcal{O}(\mathbb{C})$ and ${\mathscr A}(\mathbb{C})$ induces the pairing between  $\mathcal{O}_{\up^\infty}(\mathbb{C})$ and ${\mathscr A}_{\up^\infty}(\mathbb{C})$ and, moreover, ${\mathscr A}_{\up^\infty}(\mathbb{C})$ is the strong dual space of $\mathcal{O}_{\up^\infty}(\mathbb{C})$.
It follows from the classical theory of order and type for entire function that  this strong dual space is topologically isomorphic~to
$$
\Bigl\{a=\sum_{n=0}^\infty  a_n x^n\! :\,
\lim_{n\to \infty} |a_n|\, n!\, n^{-np}\,r^n=0
\;\forall r>0\Bigr\}
$$
with the topology determined by
$|a|_{r,p}\!:=\sup_{n\in\mathbb{Z}_+} |a_n| \,n! \,n^{-np}\,r^n$. The isomorphism is given by the pairing $\langle x^n, f\rangle\!:=d^nf/dz^n(0)$;
see, for example, \cite[Theorem~1.1.11]{Le81}.
Stirling's formula implies that replacing the factor $ n!\, n^{-np} $
with $n!^{1-p}$ in the definition of these seminorms, we get the same topology.

Put $b_{nk}\!:=n!^{1-p}k^n$. Then $(b_{nk})_{n,k\in\mathbb{N}}$ is a
K\"{o}the matrix and for each $k\in\mathbb{N}$ there is $n\in\mathbb{N}$ such that $\sum_{j}b_{jk}b_{jn}^{-1}<\infty$. Therefore, replacing  the supremum with the sum in the already modified definition of the seminorms, we again get the same topology
\cite[Propositions~27.15 and~27.16]{MeVo}. By~\eqref{faAsdef}, this implies that ${\mathscr A}_{\up^\infty}(\mathbb{C})\cong\mathfrak{A}_{p-1}$.
\end{proof}

\begin{proof}[Proof of Theorem~\ref{nilpenv}]
By Akbarov's result (Theorem~\ref{AMEAG}), it suffices to show that the power series space in~\eqref{Ufgclear} is isomorphic to ${\mathscr A}_{exp}(G)$.

Let $\ell$ be a word length function on~$G$, i.e.,
$\ell(g)\!: = \min \{ n \!: \, g \in U^{n} \}$, where $U$ is a relatively compact generating set.
Put $\xi(g)\!:=\exp \ell(g)$. Then we have an
isomorphism ${\mathscr A}_{exp}(G)\cong{\mathscr A}_{\xi^{\infty}}(G)$; see \cite[Proposition~2.8]{ArAMN}. (In fact, this assertion is a reformulation of \cite[Theorem 5.3]{Ak08}.)

Denote by $t_1,\ldots,t_m$ the canonical coordinates of the second kind on~$G$ with respect to the basis $e_1,\ldots,e_m$ and
set
$$
\up(g)\!:=\exp(|t_1|^{1/w_1}+\cdots+|t_m|^{1/w_m}).
$$
Note that the function $g\mapsto|t_1|^{1/w_1}+\cdots+|t_m|^{1/w_m}$  is equivalent to
$g\mapsto\max\{|t_1|^{1/w_1},\ldots,|t_m|^{1/w_m}\}$, which, in turn, is equivalent
to $\ell$ by~\cite[Theorem~3.1]{ArAMN}; see a proof and remarks on the history of the last result in  [ibid.], Appendix and Introduction, resp. The equivalence implies  that ${\mathscr A}_{\xi^{\infty}}(G)\cong{\mathscr A}_{\up^{\infty}}(G)$ [ibid., Lemma~3.3].

Denote by $G_j$ the subgroup $\{\exp (t_je_j)\!:\,t_j\in\mathbb{C}\}$ and
put $\up_j(t_j)\!:=\exp(|t_j|^{1/\om_j})$. Since $\up(g)=\up_1(t_1)\cdots \up_m(t_m)$, we have
by \cite[Proposition 5.5]{Ar19} that
$$
{\mathscr A}_{\up^{\infty}}(G)\cong {\mathscr A}_{\up_1^{\infty}}(G_1)\mathbin{\widehat{\otimes}} \cdots \mathbin{\widehat{\otimes}} {\mathscr A}_{\up_m^{\infty}}(G_m)
$$
as a locally convex space. Since each $G_j$ is isomorphic to~$\mathbb{C}$, it follows from Lemma~\ref{1dimdsp} that ${\mathscr A}_{\up_j^{\infty}}(G_j)\cong \mathfrak{A}_{w_j-1}$, where $\mathfrak{A}_s$ is defined in \eqref{faAsdef}. Since each $\mathfrak{A}_{w_j-1}$ is a K\"{o}the space, we have from  \cite[Proposition~3.3]{Pi02}
that $ \mathfrak{A}_{w_1-1}\mathbin{\widehat{\otimes}} \cdots \mathbin{\widehat{\otimes}} \mathfrak{A}_{w_m-1}$ is also a K\"{o}the space and topologically isomorphic to the space described in \eqref{Ufgclear}.
\end{proof}

\begin{rem}\label{wawa}
There is an alternative description mentioned above:
\begin{multline*}
\widehat{\mathscr A}(G)\cong {\mathscr A}_{exp}(G)\cong
\widehat U(\mathfrak{g})\\
\cong \Bigl\{ x=\sum_{\alpha\in\mathbb{Z}_+^m} c_\alpha
e^\alpha\in [U(\mathfrak{g})]\! : \| x\|'_r\!:=\sum_\alpha |c_\alpha| \alpha!\,
w(\al)^{-w(\al)} r^{w(\alpha)}<\infty \;\forall r>0\Bigr\},
\end{multline*}
where $w(\al)$ is defined in \eqref{widef}; see
\cite[Theorem~1.1]{ArAMN}.
Note that the seminorms in neither this family nor in~\eqref{Ufgclear} are necessarily submultiplicative; but there exists a family of submultiplicative seminorms that is equivalent to each of them.
\end{rem}

\begin{exm}
Let $\mathfrak{n}_{3,3}$ be a free $3$-step nilpotent Lie algebra in $3$
generators. Then we have a vector space decomposition
$$
\mathfrak{n}_{3,3}=\mathfrak{l}\oplus[\mathfrak{l},\mathfrak{l}]\oplus[\mathfrak{l},[\mathfrak{l},\mathfrak{l}]]\,,
$$
where $\mathfrak{l}$ is the generating subspace. Fix a basis $\{e_1,e_2,e_3\}$ in~$\mathfrak{l}$. Then
$$
e_4\!:=[e_1,e_2],\qquad e_5\!:=[e_1,e_3],\qquad e_6\!:=[e_2,e_3]
$$
form a basis in $[\mathfrak{l},\mathfrak{l}]$. Also,
$$
\begin{matrix}
[e_1,e_4],&[e_1,e_5],&[e_1,e_6],\\
[e_2,e_4],&[e_2,e_5],&[e_2,e_6],\\
[e_3,e_4],&[e_1,e_5]\,&
\end{matrix}
$$
form a basis in $[\mathfrak{l},[\mathfrak{l},\mathfrak{l}]]$, which we denote by $e_7,\ldots,e_{14}$.

Then by Proposition~\ref{Ufgclear}, the  HFG  Hopf algebra $\widehat{U}(\mathfrak{n}_{3,3})$ is topologically isomorphic to
\begin{equation*}
 \Bigl\{ a=\sum_{\alpha\in\mathbb{Z}_+^{14}} c_\alpha
e^\alpha\in [U(\mathfrak{n}_{3,3})]\! :\,
\sum_\alpha |c_\alpha|\,
\frac{r^{\al_1+\cdots+\al_{14}}}{\alpha_4!\,\alpha_5!\,\alpha_6!\,(\alpha_7!\cdots\alpha_{14}!)^2}<\infty \;\forall r>0\Bigr\}.
\end{equation*}
\end{exm}

\subsection*{General connected groups}

Now we write an explicit form of ${\mathscr A}_{exp}(G)$ for an arbitrary connected complex Lie group $G$.
Consider first the quotient homomorphism $\si\!:G\to
G/\LinC(G)$, where $\LinC(G)$ is the intersection of the
kernels of all finite-dimensional holomorphic representations of $G$ (the linearizer), and note that
the induced Hopf $\mathbin{\widehat{\otimes}}$-algebra homomorphism ${\mathscr A}_{exp}(G)\to {\mathscr A}_{exp}(G/\LinC(G))$ is, in fact,
a $\mathbin{\widehat{\otimes}}$-algebra  isomorphism \cite[Theorem~5.3(A)]{Ar19} and hence a Hopf $\mathbin{\widehat{\otimes}}$-algebra isomorphism. Since here we are interested in ${\mathscr A}_{exp}(G)$ only, we can assume that $\LinC(G)$ is trivial, i.e., $G$ is linear.

We describe  ${\mathscr A}_{exp}(G)$ in a form that generalizes \eqref{Ufgclear}. To do this we need some structure theory of linear complex Lie groups. Suppose that~$G$ is connected and linear. The \emph{exponential radical} of~$G$ is the normal complex Lie subgroup $E$  such that $G/E$ is the largest quotient of $G$ that is locally isomorphic to a direct product of a nilpotent and semisimple complex Lie group. (This is a partial case of a more general definition in the case of real Lie group can be found in~\cite{Co08}; see a discussion in~\cite[Section~3]{Ar19}.)

Let $\mathfrak{g}$  and $\mathfrak{e}$  be the Lie algebras of~$G$  and $E$. Then $\mathfrak{e}$ is uniquely defined by the condition that $\mathfrak{g}/\mathfrak{e}$ is the largest quotient of $\mathfrak{g}$ that
is a direct sum of a semisimple algebra and a nilpotent algebra; see \cite[Lemma~3.5, Lemma~3.7 and Corollary~3.10]{Ar19}.
It is not hard to see that~$E$ is nilpotent, simply connected and integral \cite[Proposition 1.3]{Ar23B}.

\begin{rem}
In our case when $G$ is connected and linear, the ideal $\mathfrak{e}$ has the following explicit description:
\begin{equation}\label{rinsr}
\mathfrak{e}=\mathfrak{r}_\infty+(\mathfrak{s},\mathfrak{r}),
\end{equation}
where  $\mathfrak{r}$ is the solvable radical, $\mathfrak{r}_\infty$ is the intersection of the lower central series of $\mathfrak{r}$,
$\mathfrak{s}$ a Levi complement, and
$(\mathfrak{s},\mathfrak{r})$ is the Lie subalgebra (in fact, a Lie ideal) generated by the linear subspace $[\mathfrak{s},\mathfrak{r}]$ (the linear span of elements of the form $[X,Y]$, where $X\in \mathfrak{s}$ and $Y\in \mathfrak{r}$); see \cite [Corollary 3.10]{Ar19}.
Note that in my article \cite{Ar19} the notation $[\cdot,\cdot]$ was used instead of $(\cdot,\cdot)$. It seems that it was not a good choice because $[\cdot,\cdot]$ is traditionally taken for linear spans.
\end{rem}

First, we  write the underlying space of ${\mathscr A}_{exp}(G)$  as a tensor product. Indeed,
since $G$ is connected and linear,
$G=B\rtimes L$, where~$B$  is simply connected solvable and~$L$ is
linearly complex reductive \cite[Theorem~16.3.7]{HiNe}. Furthermore, $E\subset B$ \cite[Theorem 3.14]{Ar19} and
there is a complex manifold isomorphism
$$
L\times (B/E)\times E\to G
$$
that induces
a $\mathbin{\widehat{\otimes}}$-algebra isomorphism
\begin{equation}\label{oexpGdec}
{\mathcal R}(L) \mathbin{\widehat{\otimes}} \mathcal{O}_{exp}(B/E)\mathbin{\widehat{\otimes}} {\mathcal R}(E)\cong \mathcal{O}_{exp}(G),
\end{equation}
where as above ${\mathcal R}(\cdot)$ denotes the algebra  of regular functions;
see \cite[Theorems~4.1 and ~5.10]{Ar19}.
Applying the strong duality functor we get locally convex space isomorphisms
\begin{multline}\label{AexpGdec}
{\mathscr A}_{exp}(G) \cong({\mathcal R}(L) \mathbin{\widehat{\otimes}} \mathcal{O}_{exp}(B/E)\mathbin{\widehat{\otimes}} {\mathcal R}(E))'\\
\cong {\mathcal R}(L)' \mathbin{\widehat{\otimes}} {\mathscr A}_{exp}(B/E)\mathbin{\widehat{\otimes}} {\mathcal R}(E)',
\end{multline}
where the second isomorphism is well defined because all the tensor multiplies in~\eqref{oexpGdec} are complete nuclear (DF)-spaces.

Further, we  take a close look at each multiple in the right-hand side of~\eqref{AexpGdec} separately.

Since $L$ is
linearly complex reductive,
there is a compact real Lie subgroup~$K$ such that~$L$ is the universal
complexification of~$K$. Recall that algebraic (=rational) irreducible representations of~$L$ are in one-to-one correspondence with unitary irreducible representations of~$K$. Denote by~$\Si$ the (countable) set of all non-isomorphic unitary irreducible representations of~$K$. Each~$\si\in\Si$ is finite dimensional and unitary (when restricted to~$K$). We  fix a finite orthonormal  basis  in the representation space of~$\si$ and  denote by $u_{ij}^\si$ the corresponding matrix coefficients multiplied by the dimension~$d_\si$ of~$\si$.

The fact that $L$ is linearly complex reductive implies that ${\mathcal R}(L)=\bigoplus_{\si\in\Si} T_\si$, where $T_\si$ is the linear space of matrix coefficients of~$\si$ \cite[Section 3.1, p.~180, Theorem]{Pr07}. Consider matrix coefficients as functions on~$K$  and also the convolution~$\ast$ with respect to the normalized left Haar measure on~$K$.
It follows from  standard orthogonality relations \cite[Theorem~(27.17)]{HeRo2}, that $u_{ij}^\si\ast u_{jk}^\si=u_{ik}^\si$ and $u_{ij}^\si\ast u_{j'k}^{\si'}=0$ when $j\ne j'$ or $\si\ne\si'$.
So for any $\si$ the subset $\{u_{ij}^\si\}$ generates the convolution algebra isomorphic to the matrix algebra $M_{d_\si}$, which in turn can be naturally identified with~$T_\si'$. Thus, ${\mathcal R}(L)'\cong \prod_{\si\in\Si} M_{d_\si}$ as a locally convex vector space and so  the first multiple in~\eqref{AexpGdec} is described.

Let $\mathfrak{b}$ be the Lie algebra of~$B$. Note that $\mathfrak{e}$ is an ideal in $\mathfrak{b}$ and both $\mathfrak{b}/\mathfrak{e}$ and $\mathfrak{e}$ are nilpotent \cite[Lemma 3.4]{Ar19} and so are $B/E$ and $E$, which are also simply connected. Hence the structure of the second multiple ${\mathscr A}_{exp}(B/E)$ in~\eqref{AexpGdec} is given by Theorem~\ref{nilpenv}. On the other hand, ${\mathcal R}(E)$ can be identified with $\mathbb{C}[f_1,\ldotp,f_k]$, where  $f_1,\ldots,f_k$ is a basis of~$\mathfrak{e}$. Then the third multiple $ {\mathcal R}(E)'$  in~\eqref{AexpGdec} is isomorphic to
$[U(\mathfrak{e})]$, the linear space of all formal  series in $f^\be$, where $\be\in \mathbb{Z}_+^k$.

Finally, we have that
\begin{equation}\label{Aexp3mu}
{\mathscr A}_{exp}(G) \cong\left(\prod_{\si\in\Si} M_{d_\si}\right) \mathbin{\widehat{\otimes}}  U_{\mathscr M}(\mathfrak{b}/\mathfrak{e}) \mathbin{\widehat{\otimes}} [U(\mathfrak{e})],
\end{equation}
 where $U_{\mathscr M}$ denote the space in~\eqref{Ufgclear}, as a locally convex space.

Fix now  an $\mathscr{F}$-basis $e_1,\ldots, e_m$ of~$\mathfrak{b}/\mathfrak{e}$.
Note that $u_{ij}^\si$,  $e^\alpha$  and $f^\beta$, where $\al$ and $\be$ are multi-indices, are continuous linear functionals on ${\mathcal R}(L)$, $\mathcal{O}_{exp}(B/E)$ and ${\mathcal R}(E)$, respectively. Therefore, for any tuple of indices,
$u_{ij}^\si \otimes e^\alpha\otimes f^\beta$ can be identified with a  continuous linear functional on $\mathcal{O}_{exp}(G)$. We denote it by $u_{ij}^\si e^\alpha f^\beta$.

It is easy to see that all three multiples in~\eqref{Aexp3mu} are K\"{o}the spaces.
So applying again a standard argument from the K\"{o}the space theory \cite[Proposition~3.3]{Pi02}  and using Akbarov's isomorphism  $\widehat{\mathscr A}(G)\cong {\mathscr A}_{exp}(G)$ (Theorem~\ref{AMEAG}),
we obtain the following result.

\begin{thm}\label{genpenv}
Let $G$ be a connected linear complex Lie group. Then the set of functionals of the form $u_{ij}^\si e^\alpha f^\beta$ is a topological basis of~${\mathscr A}_{exp}(G)$ and \emph{(}in the above notation and the notation in~\eqref{widef}\emph{)}
\begin{multline*}
\widehat{\mathscr A}(G)\cong {\mathscr A}_{exp}(G)= \Bigl\{ \sum c_{ij}^{\si\alpha\be} u_{ij}^\si
 e^\alpha f^\beta\! :\\
 \| a\|_{r,F}\!:=\sum_{\alpha\in \mathbb{Z}_+^m}\, \sum_{(\be,\si) \in F} \,
\bigl|c_{ij}^{\si\alpha\be}\bigr|\,
\frac{r^{\al_1+\cdots+\al_m}}{(\alpha_1)!^{w_1-1}\cdots
(\alpha_m)!^{w_m-1}}<\infty \;\forall r>0,\,\forall F\Bigr\},
\end{multline*}
where $F$ runs over finite subsets of $ \mathbb{Z}_+^k \times\Si$.
\end{thm}

Note that is possible to use another family of seminorms as in Remark~\ref{wawa}.

\begin{rem}
(A)~Theorem~\ref{genpenv} gives a description of $\widehat{\mathscr A}(G)$  as a locally convex space. The Hopf $\mathbin{\widehat{\otimes}}$-algebra structure on $\widehat{\mathscr A}(G)$ is inherited from that of ${\mathscr A}(G)$. In a more concrete way it can be characterized by considering $\widehat{\mathscr A}(G)$ as an analytic smash product (see a definition in \cite{Pi4}). The multiples in this product are $\widehat{\mathscr A}(L)$, which can be identified with $\prod_{\si\in\Si} M_{d_\si}$, and a completion of $U(\mathfrak{b})$. For details see~\cite{ArPC15}.

(B)~It is hot hard to see that if $w_j>1$ for some $j$, then  $\widehat{\mathscr A}(G)$ contains a copy of $\mathfrak{A}_{w_j-1}$ as a Hopf $\mathbin{\widehat{\otimes}}$-subalgebra; see \cite[Theorem 6.3 and Remark 6.8]{ArPC15} for a general result and discussion.  But in this case $\mathfrak{A}_{w_j-1}$ is not HFG  by \cite[Proposition~10]{ArRC}; cf. Example~\ref{commnHFG}.
\end{rem}

\begin{exm}\label{GLsm}
Consider  a $6$-dimensional vector space $\mathfrak{n}$ with basis\\ $e_1,f_1,f_2,f_3,f_4,f_5$
and commutation relations
$$
[e_1, f_1] = f_2,\quad [e_1 , f_2] = f_5, \quad [f_3 , f_4] = f_5
$$
with the undefined brackets being zero.  Then $\mathfrak{n}$ is a complex Lie algebra because it can be written as a semidirect sum $\mathfrak{h}_1\ltimes \mathfrak{h}_2$, where $\mathfrak{h}_1 = \mathrm{span}\{e_1\}$ and $\mathfrak{h}_2 = \mathrm{span}\{f_1,f_2,f_3,f_4,f_5\}$.

The  standard  action of $\mathrm{GL}_2(\mathbb{C})$
on the linear span of $\{f_3,f_4\}$  can be uniquely extended to the action on~$\mathfrak{n}$ that is trivial on~$e_1$ and coincides with the multiplication on the determinant on~$f_1$,~$f_2$ and~$f_5$ (see a description of the automorphism group in \cite[p.~36]{Go98}, where~$\mathfrak{n}$ is denoted by~$N_{6,3,2}$). Then we can consider the corresponding action of $\mathrm{GL}_2(\mathbb{C})$ on~$N$, the simply-connected Lie group associated with~$\mathfrak{n}$.

Let $G$ be the semidirect product $\mathrm{GL}_2(\mathbb{C})\ltimes N$. It is evident that
the Lie algebra of $G$ is isomorphic to
$\mathfrak{g}\mathfrak{l}_2(\mathbb{C})\ltimes \mathfrak{n}$. Denote its solvable radical by $\mathfrak{r}$. Since the center of $\mathrm{GL}_2(\mathbb{C})$ acts on $f_1$, $f_2$ and $f_5$ with a non-trivial character, so does the center of its Lie algebra $\mathfrak{g}\mathfrak{l}_2(\mathbb{C})$. Therefore  $f_1$, $f_2$ and $f_5$ lie in each term of the lower central series of $\mathfrak{r}$ and hence $\mathfrak{r}_\infty=\mathrm{span}\{f_1,f_2,f_5\}$.  Also, note that $\mathfrak{s}\mathfrak{l}_2(\mathbb{C})$ is a Levi subalgebra of $\mathfrak{g}\mathfrak{l}_2(\mathbb{C})\ltimes \mathfrak{n}$. Since $(\mathfrak{s}\mathfrak{l}_2(\mathbb{C}),\mathfrak{r})=\mathrm{span}\{f_3,f_4,f_5\}$, it follows from~\eqref{rinsr} that $\mathfrak{e}=\mathrm{span}\{f_1,f_2,f_3,f_4,f_5\}$.
Further, $\mathfrak{b}=\mathfrak{n}$ and we can
use the same notation for the images of $e_1,f_1,f_2$ in $\mathfrak{b}/\mathfrak{e}$.

Denote by~$\Si$ the set of equivalences classes of algebraic irreducible representations of $\mathrm{GL}_2(\mathbb{C})$ (which can be identified with unitary  irreducible representations of $\mathrm{U}(2)$). Then
we have from Theorem~\ref{genpenv} that the  HFG Hopf algebra $\widehat{\mathscr A}(G)$ is isomorphic to
$$
 \left\{\,\sum c_{ij}^{\si\alpha\be} u_{ij}^\si
 e^\alpha f^\beta\! :\,
 \sum_{\alpha\in\mathbb{Z}_+}\, \sum_{(\be,\si)\in F}
\bigl|c_{ij}^{\si\alpha\be}\bigr|\,
r^{\al_1}<\infty \;\;\forall r>0\,\forall F\right\},
$$
where $F$ runs over finite subsets of $ \mathbb{Z}_+^5 \times\Si$, as a locally convex space.
 \end{exm}

\section{Examples of quantum complex Lie groups}
\label{eqclg}

A quantum group having an affine Hopf algebra over $\mathbb{C}$ as the space of `regular functions'
gives an example of an  HFG Hopf algebra when passing to the Arens--Michael envelope (see Proposition~\ref{ANenvaff}).
But in this section, we concentrate on the holomorphic analogues of Hopf algebras corresponding to some $\hbar$-adic quantum groups. We consider the deformation parameter $\hbar$ as a complex number not a formal variable. (We speak on deformation in a broad sense meaning that not only the multiplication but also the underlying topological
vector spaces can be deformed.)

Direct checking of the Hopf algebra axioms is sometimes cumbersome and so is for topological Hopf algebras.
Some simplification is possible with using topological generators.
\begin{lm}\label{chegen}
Let $H$ be a unital $\mathbin{\widehat{\otimes}}$-algebra that is topologically generated by a subset~$X$. Suppose that
$\De\!:H\to H\mathbin{\widehat{\otimes}} H$ and $\varepsilon\!:H\to \mathbb{C}$  are continuous unital homomorphisms.

\emph{(A)}~If $\De$ is coassociative on~$X$ and the counit axiom for $\varepsilon$ holds  on~$X$, then $H$ is a $\mathbin{\widehat{\otimes}}$-bialgebra.

\emph{(B)}~If, in addition, $S\!:H\to H$ a continuous antihomomorphism such that the antipode axiom holds on~$X$, then $H$ is a Hopf $\mathbin{\widehat{\otimes}}$-algebra.
\end{lm}
\begin{proof}
It is not hard to check that $H$ is a $\mathbin{\widehat{\otimes}}$-coalgebra; this can be done as in the algebraic case, cf. \cite[Lemma 5.1.1]{Ra12}. Since $\De$ and $\varepsilon$ are homomorphisms,  $H$ is a $\mathbin{\widehat{\otimes}}$-bialgebra. The rest is clear.
\end{proof}

\subsection*{A deformation of universal enveloping algebra: the semisimple case}

In this section we define an HFG Hopf algebra, which can be considered as  an analytic version of the $\hbar$-adic quantized algebra $U_\hbar(\mathfrak{s}\mathfrak{l}_2)$. Let the quantization parameter $\hbar$ belongs to $\mathbb{C}$ and $\sinh\hbar\ne 0$.
We consider the universal HFG algebra generated by $E$, $F$ and $H$  subject to relations
\begin{equation}\label{EFHqun}
[H,E]=2E, \quad [H,F]=-2F, \quad [E,F]=\frac{\sinh \hbar H}{\sinh
\hbar}.
\end{equation}
(It is isomorphic to the quotient of the algebra of free entire functions in generators $E$, $F$ and $H$ over the closed two-sided ideal generated by the corresponding identities.)
We denote this algebra by $\widetilde U(\mathfrak{s}\mathfrak{l}_2)_\hbar$.

\begin{pr}\label{sl2qco}
Let  $\hbar\in\mathbb{C}$ and $\sinh\hbar\ne 0$. Then $\widetilde
U(\mathfrak{s}\mathfrak{l}_2)_\hbar$ is an infinite-dimensional   HFG Hopf algebra
with respect to the comultiplication $\De$, counit $\varepsilon$ and
antipode $S$ determined by
\begin{alignat*}{3}
\De\!&:H\mapsto H\otimes 1 + 1\otimes H,& \quad E &\mapsto E \otimes e^{\hbar H} + 1 \otimes E,& \quad F&\mapsto F \otimes 1 + e^{-\hbar H} \otimes F\,;\\
S\!&:H\mapsto -H,& E&\mapsto -E e^{-\hbar H} , & F &\mapsto -e^{\hbar H}F\,;\\
 \varepsilon\!&:H,E,F\mapsto 0\,.&&&&
\end{alignat*}
Moreover, $S$ is invertible.
\end{pr}
The relations in~\eqref{EFHqun} and formulas for operations are the same as in the $\hbar$-adic case \cite[\S~3.1.5]{KSc}.

To prove that $\De$,  $\varepsilon$ and $S$ are well defined we use Lemma~\ref{chegen} and the following lemmas. The first can  easily be proved by using Taylor's expansion.
\begin{lm}\label{expprim}
Let $A_1$ and $A_2$ be Arens--Michael algebras, $a_1\in A_1$, $a_2\in A_2$ and $a\!=a_1\otimes 1+1\otimes a_2$ be the element of $ A_1\mathbin{\widehat{\otimes}} A_2$.  Then  $e^a=e^{a_1}\otimes e^{a_2}$.
 \end{lm}

The second lemma comes from the noncommutative analysis. The proof is included for completeness.
\begin{lm}\label{draddcotomu}
Suppose that $a$ and $b$ are elements of an Arens--Michael algebra such that
$[b,c]=\al c$ for some $\al\in\mathbb{C}$. Then
$e^{\la b}\,c\, e^{-\la b}=e^{\la\al}c$ for any $\la\in\mathbb{C}$.
\end{lm}
\begin{proof}
We recall first the standard formula
\begin{equation}\label{adhQ}
\mathop{\mathrm{ad}}\nolimits h(b)(c) = \sum_{n=1}^\infty \frac{1}{n!}  (\mathop{\mathrm{ad}}\nolimits b)^n(c)h^{(n)}(b),
\end{equation}
which holds for arbitrary elements $b$ and $c$ of an Arens--Michael
algebra and every entire function~$h$; see e.g., \cite[\S~15,
p.~82, Corollary~1]{BS01}, where the formula is proved for bounded
operators in Banach spaces. (Here $\mathop{\mathrm{ad}}\nolimits b(c)\!:=[b,c]$.)

Suppose now that $[b,c]=\al c$. Then
$(\mathop{\mathrm{ad}}\nolimits b)^n(c)=\al^n c$. Hence,
$$
\mathop{\mathrm{ad}}\nolimits h(b)c = \sum_{n=1}^\infty \frac{1}{n!}  \al^n c\, h^{(n)}(b)=c\,(h(b+\al)-h(b)) \,,
$$
i.e.,
$h(b)\,c=c\,h(b+\al)$.
In particular, if $h(z)\!:=e^{\la z}$, then $e^{\la b}\,c=c\,e^{\la (b+\al)}$, what is to be demonstrated.
\end{proof}

\begin{proof}[Proof of Proposition~\ref{sl2qco}]
Put
$$
H'\!:=H\mapsto H\otimes 1 + 1\otimes H, \quad E'\!:= E \otimes e^{\hbar H} + 1 \otimes E, \quad F'\!:= F \otimes 1 + e^{-\hbar H} \otimes F.
$$
It is easy to see that $[H',E']=2E'$ and $[H',F']=-2F'$. Applying Lemma~\ref{draddcotomu} for $H$ and $E$ with $\al=2$ and $\la=\hbar/2$, we have $E\, e^{-\hbar H/2}=e^{\hbar} e^{-\hbar H/2} E$ and similarly $F\, e^{\hbar H/2}=e^{\hbar} e^{\hbar H/2} F$. These equalities imply
$$[E',F']=\frac{\sinh \hbar H}{\sinh
\hbar}\otimes e^{\hbar H}+ e^{-\hbar H}\otimes\frac{\sinh \hbar H}{\sinh
\hbar}\,.
$$
It follows from Lemma~\ref{expprim} that the expression on the
right is equal to $(\sinh \hbar H')/(\sinh \hbar)$. Since $\widetilde
U(\mathfrak{s}\mathfrak{l}_2)_\hbar$ is a universal algebra, we have a continuous
homomorphism
$$
\De\!: \widetilde U(\mathfrak{s}\mathfrak{l}_2)_\hbar\to \widetilde U(\mathfrak{s}\mathfrak{l}_2)_\hbar\mathbin{\widehat{\otimes}}\widetilde
U(\mathfrak{s}\mathfrak{l}_2)_\hbar
$$
defined by $H\mapsto H'$, $E\mapsto E'$, $F\mapsto F'$.

By Lemma~\ref{expprim}, $\De(e^{\hbar H})=e^{\hbar H}\otimes
e^{\hbar H}$ and this implies that the coassociativity law
for~$\De$ holds on the generators. It is trivial that $\varepsilon$ is a
well-defined continuous homomorphism. To see that the counit axiom
holds on the generators note that $\varepsilon(e^{\hbar H})=\varepsilon(e^{-\hbar
H})=1$. So by Part~(A) of Lemma~\ref{chegen}, $\widetilde
U(\mathfrak{s}\mathfrak{l}_2)_\hbar$ is a $\mathbin{\widehat{\otimes}}$-bialgebra.

In the same manner, we can prove that $S$ is a continuous
antihomomorphism such that the antipode axiom holds on
$\{E,F,G\}$. Thus, by Part~(B) of Lemma~\ref{chegen}, $\widetilde
U(\mathfrak{s}\mathfrak{l}_2)_\hbar$ is a Hopf $\mathbin{\widehat{\otimes}}$-algebra. Similarly, there is a
continuous antihomomorphism defined by
$$
H\mapsto -H,\qquad E\mapsto -e^{-\hbar H}E  , \qquad F \mapsto -Fe^{\hbar H},
$$
that is obviously inverse to~$S$.

It is not hard to see from the representation theory  that $\widetilde
U(\mathfrak{s}\mathfrak{l}_2)_\hbar$ admits infinitely many pairwise
non-equivalent  irreducible finite-dimensional representations
\cite[\S~3.2.3]{KSc}; see also \cite{Ar23}. Thus $\widetilde U(\mathfrak{s}\mathfrak{l}_2)_\hbar$ is
infinite-dimensional.
\end{proof}

The structure of $\widetilde U(\mathfrak{s}\mathfrak{l}_2)_\hbar$ as an Arens--Michael algebra is discussed in~\cite{Ar23}.

\begin{rem}
We can also consider the holomorphic form $\widetilde U(\mathfrak{g})_\hbar$ of
the $\hbar$-adic Drinfeld--Jimbo algebra $U_\hbar(\mathfrak{g})$ for an
arbitrary finite-dimensional complex semisimple Lie algebra~$\mathfrak{g}$
(see, e.g., the definition of the latter in \cite[\S~6.1.3]{KSc}). The set of
generators consists of quantum $\mathfrak{s}\mathfrak{l}_2$-triples (as in
\eqref{EFHqun}) satisfying some additional relations. Following an
argument as above, one can  prove  that $\widetilde U(\mathfrak{g})_\hbar$ is an
infinite-dimensional  HFG Hopf algebra. The details are left to
the reader.
\end{rem}

\subsection*{A deformation of universal enveloping algebra: the non-abelian solvable $2$-dimensional Lie algebra}

In this section we consider a deformation in the case of the non-abelian solvable $2$-dimensional complex Lie algebra. This deformation was proposed in \cite{AiSa} without a discussion on the exact sense of
formulas. Similar relations can be found in \cite[Example 6.2.18]{Ma95}.

Denote by $\mathfrak{a}\mathfrak{f}_1$ the Lie algebra of  the group $\mathrm{Af}_1$
of affine transformations of~$\mathbb{C}$. It is generated by $X$ and $Y$
subject to relation $[X,Y]=Y$. We fix $\hbar\in\mathbb{C}$ such that
$\sinh\hbar\ne 0$ and consider the following deformation of $\widehat
U(\mathfrak{a}\mathfrak{f}_1)$. Denote  by $\widetilde U(\mathfrak{a}\mathfrak{f}_1)_\hbar$ the HFG algebra
generated by~$X$ and~$Y$  subject to relation
$$
[X,Y]=\frac{\sinh \hbar Y}{\sinh
\hbar}\,.
$$

Since the hyperbolic sine has infinitely many zeros, all of order~$1$, it follows from a result of the author
\cite[Theorem~5]{ArRC} that there is a linear (but not algebraic) topological isomorphism
\begin{equation}\label{af1str}
\widetilde U(\mathfrak{a}\mathfrak{f}_1)_\hbar\cong \prod_{n\in \mathbb{Z}} R_n\mathbin{\widehat{\otimes}} \mathcal{O}(\mathbb{C}),
\end{equation}
where every $R_n$ is isomorphic to~$\mathbb{C}[[z]]$.

\begin{pr}\label{af1qco}
Let $\hbar\in\mathbb{C}$ and $\sinh\hbar\ne 0$. Then $\widetilde
U(\mathfrak{a}\mathfrak{f}_1)_\hbar$ is an infinite-dimensional   HFG Hopf algebra
with respect to the comultiplication $\De$,  counit $\varepsilon$ and
antipode $S$ determined by
\begin{alignat*}{2}
\De\!&:X\mapsto X\otimes e^{-\hbar Y}+ e^{\hbar Y}\otimes X,&\qquad &Y\mapsto 1\otimes Y+ Y\otimes 1;\\
 S\!&:X\mapsto -X-\frac{\hbar }{\sinh \hbar}\,\sinh \hbar Y,& & Y\mapsto -Y;\\
 \varepsilon\!&:X,Y\mapsto 0\,.&&
\end{alignat*}
Moreover, $S$ is invertible.
\end{pr}
\begin{proof}
Put
$$
X'\!:=X\otimes e^{-\hbar Y}+ e^{\hbar Y}\otimes X,\qquad
Y'\!:= 1\otimes Y+ Y\otimes 1.
$$
By Lemma~\ref{expprim}, we have $e^{\hbar Y'}=e^{\hbar Y}\otimes
e^{\hbar Y}$. It is easy to check that $[X',Y']=\sinh \hbar
Y/\sinh\hbar$. Since $\widetilde U(\mathfrak{a}\mathfrak{f}_1)_\hbar$ is a universal
algebra, the correspondence $X\mapsto X'$, $Y\mapsto Y'$ induces a
continuous homomorphism
$$
\De\!: \widetilde U(\mathfrak{a}\mathfrak{f}_1)_\hbar\to \widetilde
U(\mathfrak{a}\mathfrak{f}_1)_\hbar\mathbin{\widehat{\otimes}}\widetilde U(\mathfrak{a}\mathfrak{f}_1)_\hbar.
$$

It is easy to see  that the coassociativity law for $\De$ holds on
the generators. As in the proof of Proposition~\ref{sl2qco}, we
see  that $\varepsilon$ is a well-defined continuous homomorphism and the
counit axiom holds on the generators.  Part~(A) of
Lemma~\ref{chegen} implies that $\widetilde U(\mathfrak{a}\mathfrak{f}_1)_\hbar$ is a
$\mathbin{\widehat{\otimes}}$-bialgebra.

It is straightforward that
$$
[S(Y),S(X)]=\frac{\sinh \hbar
S(Y)}{\sinh \hbar}
$$
and so $S$ is a well-defined continuous
anti-automorphism of $\widetilde U(\mathfrak{a}\mathfrak{f}_1)_\hbar$. Note that
$$
m(1\otimes S)(X)=[X,e^{\hbar Y}]-\frac{\hbar}{2\sinh \hbar}( e^{2\hbar Y}-1).
$$
(Here $m$ is the linearization of the multiplication.) Applying~\eqref{adhQ} to $[e^{\hbar Y},X]$, we have only one non-trivial summand on the right-hand side, i.e.,
$$
[e^{\hbar Y},X]=[Y,X]\, \hbar\, e^{\hbar Y}=-\frac{\hbar}{2\sinh \hbar}( e^{2\hbar Y}-1).
$$
So $m(1\otimes S)(X')=0=\varepsilon(X)1$. Similarly, $m(S\otimes
1)(X')=0=\varepsilon(X)1$. The antipode axiom for  $Y$ is straightforward.
Part~(B) of Lemma~\ref{chegen} implies that $\widetilde
U(\mathfrak{a}\mathfrak{f}_1)_\hbar$ is a Hopf $\mathbin{\widehat{\otimes}}$-algebra.

Note also that the continuous antihomomorphism defined by
$$
 X\mapsto -X+\frac{\hbar }{\sinh \hbar}\,\sinh \hbar Y,\qquad Y\mapsto -Y,
$$
is inverse to~$S$.

Finally, it follows from \eqref{af1str} that  $\widetilde
U(\mathfrak{a}\mathfrak{f}_1)_\hbar$ is infinite-dimensional.
\end{proof}

\subsection*{A deformation of the function algebra of a non-algebraic complex Lie group}

Deformations of the  Hopf algebra of regular functions on some algebraic groups are well known. For example, in the classical case of semisimple groups \cite[Section~9.2.3]{KSc} the resulting  Hopf algebras are  affine and then their Arens--Michael envelopes are HFG Hopf algebras. The analytic form of a deformation of the function algebra on the complex Lie group $\mathrm{Af}_1$ of affine transformations of~$\mathbb{C}$ (the `az+b' group) is considered in \cite{Ak08}. We define here a deformation for a non-algebraic group, namely, the simply-connected covering $\widetilde{\mathrm{Af}}_1$ of $\mathrm{Af}_1$  and this example seems to be new.

Fix $q\in \mathbb{C}\setminus\{0\}$  and consider
the universal HFG algebra  generated by~$z$ and~$b$  subject to relation
$$
e^z b=q be^z.
$$
When $q=1$, it is isomorphic the algebra  of holomorphic functions
on  $\widetilde{\mathrm{Af}}_1$. For a general value of parameter, we
denote this HFG algebra by $\mathcal{O}(\widetilde{\mathrm{Af}}_1)_q$.

\begin{pr}\label{fuaf1q}
For every non-zero value of the parameter,
$\mathcal{O}(\widetilde{\mathrm{Af}}_1)_q$ is an infinite-dimensional  HFG  Hopf
algebra with respect to the comultiplication $\De$, counit $\varepsilon$
and antipode $S$ determined by
\begin{alignat*}{2}
\De\!&:z\mapsto 1\otimes z+ z\otimes 1,&\qquad
 b&\mapsto e^z\otimes b+ b\otimes 1\,;\\
S\!&:z\mapsto -z,&
b&\mapsto-e^{-z}b\,;\\
\varepsilon\!&:a,b\mapsto0\,.&&
\end{alignat*}
Moreover, $S$ is invertible.
\end{pr}

\begin{proof}
Put
$$
z'\!:=1\otimes z+ z\otimes 1, \qquad b'\!:=e^z\otimes b+ b\otimes 1.
$$
By Lemma~\ref{expprim}, we have  $e^{z'}= e^z\otimes e^z$. So
$$
e^{z'}b'=
(e^z\otimes e^z)(e^z\otimes b+ b\otimes 1)=q(e^z\otimes b+
b\otimes 1)(e^z\otimes e^z)=qb'e^{z'}.
$$
Hence $\De$ is a well-defined continuous homomorphism. It is easy
to see that $S$ is a well-defined continuous anti-automorphism
such that the antipode axiom holds on $\{z,b\}$. The similar fact
is true for~$\varepsilon$. Applying Lemma~\ref{chegen} we see that
$\mathcal{O}(\widetilde{\mathrm{Af}}_1)_q$ is an HFG Hopf algebra.

It is easy to see that $S^{-1}$ is defined by $z\mapsto -z$ and
$b\mapsto-e^{-z}b$.

To show that $\mathcal{O}(\widetilde{\mathrm{Af}}_1)_q$ is infinite dimensional
fix $\ze\in\mathbb{C}$ such that $e^\ze=q$ and consider the universal HFG algebra
$B$ generated by $x$ and $y$  subject to relation $[x,y]=\ze y$.
Then $(\mathop{\mathrm{ad}}\nolimits x)^n(y)=\ze^n y$ for every $n\in\mathbb{N}$. So,
by~\eqref{adhQ}, $[e^x,y]=(e^\ze-1)ye^x$.  Therefore $ e^x y=q
ye^x$ and, by the universal property, there is a continuous
homomorphism $\varphi\!:\mathcal{O}(\widetilde{\mathrm{Af}}_1)_q\to B$ that  maps~$z$ and~$b$ to~$e^x$ and~$y$, respectively. The structure of~$B$ is known:  there is a linear (but
not algebraic) topological isomorphism $B\cong  \mathbb{C}[[y]]\mathbin{\widehat{\otimes}}
\mathcal{O}(\mathbb{C})$ (\cite[Proposition~5.2]{Pir_qfree}; cf. \eqref{af1str}).
Obviously, the range of~$\varphi$ contains the subalgebra of~$B$ generated by~$y$, which is isomorphic to~$\mathbb{C}[y]$.
Therefore this subalgebra is infinite dimensional and so is
$\mathcal{O}(\widetilde{\mathrm{Af}}_1)_q$.
\end{proof}

\appendix
\section{Some properties of Stein algebras\\ and their dual spaces}
\label{ap:top}
Here we collected results which are used in proofs of Theorems~\ref{antiSTcoH} and~\ref{cicHFGgen}.

\subsection*{Auxiliary results for Theorem~\ref{antiSTcoH}}
Let $(X,\mathcal{O}_X)$  be a complex analytic space and $x\in X$. Consider the stalk
\begin{equation}\label{stalkd}
\mathcal{O}_{X,x}\!:=\varinjlim_{x\in U}\mathcal{O}_X(U),
\end{equation}
where $U$ runs over open subsets of $X$ containing $x$ and  $\mathcal{O}_X(U)$  denotes the algebra of holomorphic sections. Here the inductive limit (=colimit over a directed set) is considered in the category of rings. But we claim that it also can be interpreted in the category of commutative $\mathbin{\widehat{\otimes}}$-algebras.

Being a local analytic algebra (i.e., a quotient of the algebra of convergent power series), $\mathcal{O}_{X,x}$ can be endowed with the quotient topology (which is independent of the quotient homomorphism
\cite[Chapter 2, \S\,1, Corollary to Theorem~4]{GR1}). We need a result on this topology, which is
is well known in the case of a manifold (cf. \cite[\S\,2, Proposition, Part\,(iii)]{ADM71} and \cite[\S\,8]{Pir_stbflat}). But the author was unable to find a reference for a general complex analytic space. So a proof is included here for completeness. The main idea is the same as for manifolds; see  \cite[Chapter\,II, \S\,2, no.\,3, p.\,57, Proposition\,7]{Gr55} and \cite[\S\,4, (b) pp.\,80--81]{Gr54}. But we additionally need to pass to quotients.

Recall the definition of the standard topology on the algebra  $\mathcal{O}_X(U)$ of holomorphic sections.
Let $(U,\mathcal{O}_D/I)$ be a local model, i.e., $D$ is a polydisc in $\mathbb{C}^m$ for some  $m\in\mathbb{N}$, $\mathcal{O}_D$ is the sheaf of holomorphic functions in~$D$,  $I$ is a coherent sheaf ideal in $\mathcal{O}_D$, and $U$ is the support of~$I$. It follows from Cartan's theorem~B that $\mathcal{O}_X(U)\cong\mathcal{O}_D(D)/I(D)$. Since $I(D)$ is a closed ideal, $\mathcal{O}_X(U)$ is a Fr\'echet--Arens--Michael algebra. The definition can be extended to the case of an arbitrary $U$ but we do not need it. For some details see, e.g., the first three paragraphs of~\S\,1 in~\cite{Fo67}.

\begin{pr}\label{AMstalk}
Let $(X,\mathcal{O}_X)$  be a complex analytic space and $x\in X$.

\emph{(A)}~Suppose that all $\mathcal{O}_X(U)$ are endowed with their standard topologies. Then the topology of the locally convex inductive limit in~\eqref{stalkd} coincides with the quotient topology on $\mathcal{O}_{X,x}$.

\emph{(B)}~$\mathcal{O}_{X,x}$ is an Arens--Michael algebra.
\end{pr}

\begin{proof}
(A)~Consider the local model $(U,\mathcal{O}_D/I)$ as above. We can replace the inductive limit in~\eqref{stalkd} by  $\varinjlim\mathcal{O}_X(U_n)$, where $(U_n)$ is a decreasing sequence  of open neighbourhoods of $x$ such that $\bigcap_n U_n=\{x\}$. Moreover, we can put $U_n\!:=D_n\cap U$, where $(D_n)$ is a sequence of open polydiscs contained in $D$ such that the closure of $D_{n+1}$ belongs to $D_n$ and the polyradius tends to~$0$.  Then $\mathcal{O}_X(U_n)\cong\mathcal{O}_X(D_n)/I(D_n)$ as above.
Since in the category of locally convex spaces inductive limits commute with quotients, $\varinjlim\mathcal{O}_X(U_n)$ is isomorphic to $\mathcal{O}_{X,x}$ endowed with the quotient topology.

(B)~We now replace $(\mathcal{O}_X(U_n))$ by a sequence of Banach algebras.  Denote by $H^\infty(D_n)$ the Banach algebra of bounded holomorphic functions in $D_n$. Consider the natural homomorphism $\varphi_n\!:H^\infty(D_n)\to \mathcal{O}_{{\mathbb C}^m,x}$ and the quotient homomorphism $\alpha\!:\mathcal{O}_{{\mathbb C}^m,x}\to \mathcal{O}_{X,x}$. Then put $B_n\!:=H^\infty(D_n)/\Ker \alpha\varphi_n$. It is not hard to see that $(B_n)$ is an inductive sequence of Banach algebras with injective
compact linking maps. Moreover, its inductive limit in the category of locally convex spaces is isomorphic to $\mathcal{O}_{X,x}$; see \cite[Chapter~2, \S\,1, Theorem~7]{GR1} or \cite[Theorem 1.4]{Pa82}.

Being the inductive limit (as a locally convex space) of a sequence of normed algebras, $\mathcal{O}_{X,x}$  has jointly continuous multiplication and, moreover, is a locally $m$-convex algebra \cite{AN96} (for a short proof see~\cite[Theorem\,1]{DW97}).  Being the inductive limit of a sequence of Banach spaces with injective compact linking maps, $\mathcal{O}_{X,x}$ is complete \cite[Proposition\,7]{Si55}; see also a proof in \cite[\S\,26, 2.3, p.\,146, Korollar]{FW68}. Hence $\mathcal{O}_{X,x}$  is an Arens--Michael algebra.
\end{proof}

Note that even a strict inductive limit of Fr\'echet--Arens--Michael algebras can have discontinuous multiplication~\cite{DW97}. So the forgetful functor from  $\mathbin{\widehat{\otimes}}$-algebras  to locally convex spaces does not preserve in general even sequential inductive limits. Nevertheless, Proposition~\ref{AMstalk} asserts that the forgetful functor preserves the inductive limit under consideration and we use this fact in the proof of the following proposition.

\begin{pr}\label{indcattopa}
The forgetful functor from the category of commutative  $\mathbin{\widehat{\otimes}}$-algebras  to the category of commutative associative $\mathbb{C}$-algebras preserves the inductive limit of the system $(\mathcal{O}_X(U);\,U\ni x)$, where $x\in X$.
\end{pr}
\begin{proof}
It suffices to show that the inductive limit is preserved by the forgetful functor to the category of sets. Note that the forgetful functor from locally convex spaces to sets preserves inductive limits with injective linking maps (this follows from the explicit construction in \cite[Chapter\,II, \S\,6, 6.3]{SM}). Thus $\mathcal{O}_{X,x}$ can be considered as an inductive limit of locally convex spaces. But it follows from Proposition~\ref{AMstalk} that it is also a $\mathbin{\widehat{\otimes}}$-algebra. So $\mathcal{O}_{X,x}$ satisfies the desired universal property in the category of commutative $\mathbin{\widehat{\otimes}}$-algebras.
\end{proof}

If $X$ is a complex analytic space, we denote by $\mathcal{O}(X)$ the algebra of global sections of the sheaf $\mathcal{O}_X$. Take now two complex analytic  spaces $X$ and $Y$. The natural projections $X\times Y\to X$ and $X\times Y\to Y$ induce continuous homomorphisms  $\mathcal{O}(X)\to\mathcal{O}(X\times Y)$ and $\mathcal{O}(Y)\to\mathcal{O}(X\times Y)$. Since $\mathcal{O}(X)\mathbin{\widehat{\otimes}} \mathcal{O}(Y)$ is the coproduct of $\mathcal{O}(X)$ and $\mathcal{O}(Y)$ in the category of commutative $\mathbin{\widehat{\otimes}}$-algebras, there is a continuous homomorphisms
\begin{equation}\label{OOtimes}
\mathcal{O}(X)\mathbin{\widehat{\otimes}} \mathcal{O}(Y)\to
\mathcal{O}(X\times Y).
\end{equation}
\begin{pr}\label{OOtiso}
The homomorphism in~\eqref{OOtimes} is a topological isomorphism for every Stein spaces $X$ and $Y$.
\end{pr}
This result holds for arbitrary complex analytic spaces and goes back to Grothendieck; see \cite[II.3.3 and II.4.4]{Gr55}. The partial cases of polydiscs and simply connected domains can be found, e.g., in \cite[\S\,1.4]{Do74} and  \cite[Chapter~II, p.\,115, Corollary 4.15]{X1}, respectively. Here we include a short proof for Stein spaces based on universal properties and Forster's theorem.

\begin{proof}
Since $X\times Y$ is a product in the category of Stein spaces, Forster's theorem \cite{Fo67} implies that $\mathcal{O}(X\times Y)$ is a coproduct in the category of Stein algebras. So it suffices to show that $\mathcal{O}(X)\mathbin{\widehat{\otimes}} \mathcal{O}(Y)$ is a  Stein algebra.

Since $\mathcal{O}(X)$ and $\mathcal{O}(Y)$ are Stein algebras, they are quotients of $\mathcal{O}(\mathbb{C}^m)$ and $\mathcal{O}(\mathbb{C}^n)$ for some $m,n\in\mathbb{N}$, respectively. Since $\mathcal{O}(\mathbb{C}^m)$ and $\mathcal{O}(\mathbb{C}^n)$ are Fr\'echet spaces, $\mathcal{O}(\mathbb{C}^m)\mathbin{\widehat{\otimes}} \mathcal{O}(\mathbb{C}^n)\to \mathcal{O}(X)\mathbin{\widehat{\otimes}} \mathcal{O}(Y)$ is a  quotient map; see, e.g., \cite[\S\,41, p.\,189, (8)]{Kot2}. Finally note that $\mathcal{O}(\mathbb{C}^m)\mathbin{\widehat{\otimes}} \mathcal{O}(\mathbb{C}^n)\cong \mathcal{O}(\mathbb{C}^{m+m})$ and so $\mathcal{O}(X)\mathbin{\widehat{\otimes}} \mathcal{O}(Y)$ is a Stein algebra being quotient of $\mathcal{O}(\mathbb{C}^{m+m})$. This completes the proof.
\end{proof}

\subsection*{Auxiliary results for Theorem~\ref{cicHFGgen}}

\begin{lm}\label{Ptbam}
If $(X,\mathcal{O}_X)$  is a complex analytic space countable at infinity, then
$\mathcal{O}(X)$ is a nuclear Fr\'{e}chet space.
\end{lm}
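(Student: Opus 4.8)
The plan is to realize $\cO(X)$ as a closed subspace of a countable product of spaces of holomorphic functions on local models and to invoke the permanence properties of nuclear Fréchet spaces; both completeness and metrizability will come out of this representation as well. First I would use that $X$, being a complex space, has a basis of open sets each isomorphic to a closed analytic subspace of a polydisc; since $X$ is countable at infinity, I can extract from such a basis a countable open cover $\{U_i\}_{i\in\N}$. The topology on $\cO(X)$ is the compact-open topology, and the sheaf axiom for $\cO_X$ realizes it, as a locally convex space, as the kernel of the continuous difference map
$$\prod_{i}\cO(U_i)\longrightarrow\prod_{i,j}\cO(U_i\cap U_j),\qquad (f_i)\longmapsto\bigl(f_i|_{U_i\cap U_j}-f_j|_{U_i\cap U_j}\bigr).$$
Thus $\cO(X)$ is, topologically, a closed subspace of $\prod_i\cO(U_i)$. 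Since a countable product of nuclear Fréchet spaces is nuclear Fréchet, and a closed subspace of a nuclear Fréchet space is again nuclear Fréchet, the lemma follows once each $\cO(U_i)$ is shown to be nuclear Fréchet.

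Next I would establish the local statement. Fix $i$ and write $U_i$ as a closed analytic subspace of a polydisc $P\subseteq\CC^{n_i}$, with ideal sheaf $\mathcal J\subseteq\cO_P$. The space $\cO(P)$ is the classical example of a nuclear Fréchet space, the Cauchy estimates making the restriction maps between nested polydiscs nuclear. Because $P$ is Stein, Cartan's theorem B \cite{GR1} gives $H^1(P,\mathcal J)=0$, so the restriction homomorphism $\cO(P)\to\cO(U_i)$ is surjective with closed kernel $\mathcal J(P)$; as a continuous surjection of Fréchet spaces it is open by the open mapping theorem, and hence $\cO(U_i)$ is a Fréchet quotient of the nuclear space $\cO(P)$, so it is itself nuclear Fréchet.

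The step I expect to require the most care is the claim in the first paragraph that the sheaf-theoretic identification is a \emph{topological} isomorphism of $\cO(X)$ onto a closed subspace of the product, not merely an algebraic one: one must verify that the subspace topology inherited from $\prod_i\cO(U_i)$ agrees with the compact-open topology on $\cO(X)$. This is where openness of the cover and the countability-at-infinity hypothesis enter, the latter guaranteeing that the index set is countable so that the permanence properties for Fréchet and for nuclear spaces indeed apply.
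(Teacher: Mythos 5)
Your argument is essentially the paper's own: embed $\cO(X)$ as a closed subspace of the countable product $\prod_n \cO(U_n)$ over a countable cover by local models (the paper cites Grauert--Remmert, \emph{The theory of Stein spaces}, \S V.6.3, for exactly this), realize each $\cO(U_n)$ as a Fr\'echet quotient of the nuclear Fr\'echet space of holomorphic functions on a model domain, and invoke stability of nuclear Fr\'echet spaces under countable products and closed subspaces; your extra detail via Cartan's Theorem B on a polydisc plus the open mapping theorem merely makes explicit what the paper compresses into ``by the definition of a local model.'' One caveat on the step you flagged as delicate: the lemma must cover possibly non-reduced spaces (reducedness is only \emph{proved} later, in Theorem~\ref{antiSTcoH}), and for a non-reduced space sections are not functions, so ``the compact-open topology on $\cO(X)$'' is not defined --- the canonical topology is instead \emph{defined} through the very embedding into $\prod_n\cO(U_n)$ that you construct, with independence of the chosen cover being the content of the Grauert--Remmert reference, which dissolves rather than complicates the compatibility check you anticipated.
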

\begin{proof}
If $\{U_n\!:\,n\in\mathbb{N}\}$ is an open cover of $X$ such that
$(U_n,\mathcal{O}_{U_n})$ is a local model of each $n$, then $\mathcal{O}(X)$ is
embedded in $\prod_n \mathcal{O}(U_n)$ as a closed subspace (cf.
\cite[\S~V.6.3]{GR3}).  By the definition of a local model, each
$\mathcal{O}(U_n)$ is a~quotient of the nuclear Fr\'{e}chet space
$\mathcal{O}(V)$, where $V$ is an open subset of $\mathcal{O}(\mathbb{C}^{m_n})$ for some  $m_n\in\mathbb{N}$. So it is also a nuclear Fr\'{e}chet space. Finally, the class of nuclear Fr\'{e}chet spaces is
stable under countable  products \cite[\S 18.3]{Kot1} and
passing to closed subspaces.
\end{proof}

We make use of two classes of locally convex spaces:
Pt\'{a}k (or $B$-complete)  and barreled spaces. The definitions can be found in \cite[\S~IV.8]{SM} and \cite[\S~II.7]{SM}, respectively; see also \cite[\S 34.2]{Kot2} and \cite[\S~27.1]{Kot1}.

\begin{lm}\label{Ptba1}
If $E$ is a nuclear Fr\'{e}chet space, then the strong dual space
$E'$ is a barreled Pt\'{a}k space.
\end{lm}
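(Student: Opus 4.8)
The plan is to establish the two properties separately, the key reduction being that a nuclear Fréchet space is in particular Fréchet–Montel (nuclearity implies the Schwartz property, and a Fréchet Schwartz space is Montel; see \cite[\S~III.7]{SM}), hence reflexive. Throughout I would write $E'$ for the strong dual $E'_\beta$ and use reflexivity to identify $(E')'=E$ as a dual pair. The barreledness of $E'$ is then the easy half: the strong dual of a Montel space is again Montel \cite[\S~IV.5]{SM}, and every Montel space is by definition barreled. (Equivalently, one may quote that the strong dual of a reflexive Fréchet space is barreled.)

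The real content is the Pták (B-complete) property, which I expect to be the main obstacle. I would invoke the dual characterization of B-completeness \cite[\S~IV.8]{SM}: a locally convex space $F$ is a Pták space if and only if every linear subspace $Q$ of $F'$ whose intersection $Q\cap U^{\circ}$ with the polar of each $0$-neighbourhood $U$ is $\sigma(F',F)$-closed is itself $\sigma(F',F)$-closed. Applying this with $F=E'$ and $(E')'=E$, the $0$-neighbourhoods of $E'$ are the polars $B^{\circ}$ of bounded sets $B\subseteq E$, so the equicontinuous sets $U^{\circ}\subseteq E$ that occur are exactly the bipolars $\overline{\Gamma(B)}$, i.e.\ the closed absolutely convex hulls of bounded sets. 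Since $E$ is Montel, such a hull is bounded and closed, hence compact; and on a compact set the coarser Hausdorff topology $\sigma(E,E')$ coincides with the given topology, so $Q\cap\overline{\Gamma(B)}$ is weakly closed precisely when it is closed. In this way the B-completeness criterion reduces to the intrinsic assertion that every subspace $Q\subseteq E$ meeting each compact set in a closed set must be closed.

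To finish, I would prove this last assertion directly, where the metrizability of $E$ makes matters elementary: if $x_n\in Q$ and $x_n\to x$, then the set $\{x_n\}\cup\{x\}$ is compact, hence bounded, so its closed absolutely convex hull $K$ is compact by the Montel property; as $Q\cap K$ is closed and contains every $x_n$, it contains $x$, whence $x\in Q$. Thus $Q$ is sequentially closed, and being a subspace of a metrizable space it is closed. Combining this B-completeness with the barreledness obtained above gives the lemma. The delicate points are purely the bookkeeping of polars in the dual criterion and the passage from weak-closedness to closedness; these rest on reflexivity and the Montel property respectively, after which metrizability reduces the final closedness test to a one-line sequential argument.
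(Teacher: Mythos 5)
Your proof is correct, but for the substantive half it takes a genuinely different route from the paper. The paper's proof of the lemma is purely citation-based: it notes that a complete nuclear barreled space is Montel and hence reflexive, then quotes the known facts that the strong dual of a reflexive Fr\'{e}chet space is a Pt\'{a}k space (K\"{o}the \S~34.3(5), or \cite[\S~IV.8, Example~2]{SM}) and that the strong dual of a Montel space is Montel, hence barreled. You handle barreledness the same way, but instead of citing the Pt\'{a}k property you reprove it from the definition of $B$-completeness via almost-closed subspaces: identifying $(E'_\beta)'=E$ by reflexivity, observing that the relevant equicontinuous sets are the bipolars $\overline{\Gamma(B)}$ of bounded sets, using the Montel property to make these compact (so that $\sigma(E,E')$ and the original topology agree on them), and finishing with a sequential closedness argument available because $E$ is metrizable. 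This is essentially a self-contained Krein--\v{S}mulian/Banach--Dieudonn\'{e}-style argument specialized to the Fr\'{e}chet--Montel case; what it buys is transparency (the general reflexive-Fr\'{e}chet result is harder, whereas Montel compactness reduces everything to an elementary limit argument), at the cost of length. Two small points you should make explicit if you write it up: the criterion's hypothesis and conclusion are about $\sigma(E,E')$-closedness, so you need the standard remark that a linear (hence convex) subspace, and likewise $Q\cap\overline{\Gamma(B)}$ inside the compact set, is weakly closed iff closed in the original topology --- you use this silently in both directions; and your route to the Montel property (nuclear $\Rightarrow$ Schwartz, Fr\'{e}chet--Schwartz $\Rightarrow$ Montel) differs from the paper's (complete nuclear barreled $\Rightarrow$ Montel, via Treves), though both are standard.
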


\begin{proof}
Any Fr\'{e}chet space is barreled,
any complete nuclear barreled space is a Montel space \cite[Corollary~3 of
Proposition~50.2]{Tre}, and any Montel space is reflexive \cite[Corollary  of
Proposition~36.9]{Tre}. Therefore $E$ is reflexive.

The strong dual of a reflexive
Fr\'{e}chet space is a Pt\'{a}k space   (\cite[\S 34.3(5)]{Kot2}
or~\cite[\S~IV.8, Example~2]{SM}); in particular, $E'$  is a Pt\'{a}k space.
The strong dual of a Montel space is a Montel space \cite[Proposition~36.10]{Tre},
and a~Montel space is barreled  by definition. Thus $E'$ is barreled.
\end{proof}

For arbitrary  complex analytic space $(X,\mathcal{O}_X)$  we consider the space ${\mathscr A}(X)\!:= \mathcal{O}(X)'$ of
analytic functionals on global sections.
Combining Lemmas~\ref{Ptbam} and~\ref{Ptba1} we get the following assertion.
\begin{pr}
\label{Ptba}
If $(X,\mathcal{O}_X)$  is a complex analytic space countable at infinity, then
${{\mathscr A}}(X)$ is a barreled Pt\'{a}k space.
\end{pr}

The following result is an application of this proposition.

\begin{pr}\label{opensurvl}
Let $N$ be a closed normal subgroup in a complex Lie group $G$ countable at infinity.
Then the $\mathbin{\widehat{\otimes}}$-algebra homomorphism $\varphi'\!:{\mathscr A}(G)\to{\mathscr A}(G/N)$
induced by the quotient map $G\to G/N$ is surjective and open.
\end{pr}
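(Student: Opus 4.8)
The plan is to realize $\phi'$ as the transpose of the pullback homomorphism and then apply Pt\'ak's open mapping theorem. First note that $G/N$ is again a complex Lie group countable at infinity: it is a holomorphic quotient of $G$, and as a continuous image of a $\sigma$-compact space it is $\sigma$-compact. Hence, by Corollary~\ref{Ptba}, both $\cA(G)$ and $\cA(G/N)$ are barreled Pt\'ak spaces. By definition $\phi'=F(\phi)'$ is the transpose of $F(\phi)\!:\cO(G/N)\to\cO(G)$, $f\mapsto f\circ\phi$. Since $F(\phi)$ is a continuous linear map of Fr\'echet spaces, its transpose is continuous for the strong dual topologies (for a bounded $A\subset\cO(G/N)$ one checks directly that $F(\phi)'(F(\phi)(A)^\circ)\subset A^\circ$). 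So it remains to prove that $\phi'$ is surjective; openness will then follow from Theorem~\ref{Ptakopm}, as $\phi'$ will be a continuous surjection from the Pt\'ak space $\cA(G)$ onto the barreled space $\cA(G/N)$.

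The heart of the argument is the surjectivity of $\phi'$, which I would deduce from the fact that $F(\phi)$ is a topological isomorphism onto a closed subspace of $\cO(G)$. Since $\phi$ is surjective, $F(\phi)$ is injective. I claim its image $M\!:=\Im F(\phi)$ is exactly the subspace of $N$-invariant functions
$$
M=\{h\in\cO(G):\, h(gn)=h(g)\ \text{for all}\ g\in G,\ n\in N\}.
$$
The inclusion $\subset$ is immediate from $\phi(gn)=\phi(g)$; for $\supset$ one uses that $\phi$ is a holomorphic submersion admitting local holomorphic sections, so that an $N$-invariant $h\in\cO(G)$ descends to a holomorphic function on $G/N$. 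The right-hand side is closed, being the intersection $\bigcap_{n\in N}\Ker(R_n-\mathrm{id})$ of kernels of the continuous operators $R_n\!:h\mapsto(g\mapsto h(gn))$ on $\cO(G)$. Thus $M$ is a closed subspace of the Fr\'echet space $\cO(G)$, hence itself Fr\'echet, and the corestriction $\cO(G/N)\to M$ is a continuous linear bijection of Fr\'echet spaces, therefore a topological isomorphism by the open mapping theorem.

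Factoring $F(\phi)=j\circ\alpha$, where $\alpha\!:\cO(G/N)\xrightarrow{\ \sim\ }M$ is this isomorphism and $j\!:M\hookrightarrow\cO(G)$ is the inclusion, we obtain $\phi'=\alpha'\circ j'$. The restriction map $j'\!:\cO(G)'\to M'$ is surjective by the Hahn--Banach theorem (every continuous functional on the closed subspace $M$ extends to $\cO(G)$), and $\alpha'$ is an isomorphism; hence $\phi'$ is surjective. Combining this with the continuity noted above and invoking Pt\'ak's open mapping theorem then yields openness.

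I expect the main obstacle to be the surjectivity of $\phi'$, and within it the identification of $\Im F(\phi)$ with the \emph{closed} subspace of $N$-invariant functions. The genuinely geometric input is the descent of invariant functions along $\phi$, which simultaneously gives $M=\Im F(\phi)$ and its closedness; once $M$ is known to be closed, the passage from $F(\phi)$ to a surjective transpose, and from a continuous surjection to an open one, is formal, relying only on the open mapping theorem for Fr\'echet spaces, the Hahn--Banach theorem, and Theorem~\ref{Ptakopm}.
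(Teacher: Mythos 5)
Your proposal is correct and follows essentially the same route as the paper: the paper also deduces surjectivity of $\phi'$ from the topological injectivity of the pullback $\cO(G/N)\to\cO(G)$ via the Hahn--Banach theorem, and then obtains openness from Pt\'ak's open mapping theorem together with Corollary~\ref{Ptba}. The only difference is that the paper dismisses the topological injectivity with ``it is not hard to see,'' whereas you spell it out (closed range equal to the $N$-invariant functions, descent via local holomorphic sections, and the open mapping theorem for Fr\'echet spaces), which is a sound elaboration of the same step.
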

\begin{proof}
It is not hard to see that  the  continuous linear map
$\varphi\!:\mathcal{O}(G/N)\to \mathcal{O}(G)$ induced by $G\to G/N$ is
topologically injective. It follows from the
Hahn--Banach Theorem that the strong dual map $\varphi'$ is surjective \cite[II.4.2]{SM}. Proposition~\ref{Ptba} implies that both ${\mathscr A}(G)$ and ${\mathscr A}(G/N)$ are barreled Pt\'{a}k spaces.
So we can apply Pt\'{a}k's open mapping theorem, which asserts that every continuous surjective linear map from a
Pt\'{a}k space  to a barreled space  is open (see~\cite[\S 34.3]{Kot2} or~\cite[\S~IV.8.3, Corollary~1]{SM}). Thus $\varphi'$ is open.
\end{proof}

\section{Free products}
\label{ap:free}

Here some facts on free products of $\mathbin{\widehat{\otimes}}$-algebras and Arens--Michael algebras are collected.

\begin{pr}\label{fincopr}
\emph{(A)} The category of unital  $\mathbin{\widehat{\otimes}}$-algebras has finite coproducts.

\emph{(B)} The category of unital  Arens--Michael algebras has finite coproducts.
\end{pr}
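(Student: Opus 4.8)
The plan is to realise both coproducts explicitly as \emph{analytic free products}, building the answer to (A) from a free unital $\ptn$-algebra cut down by relations, and then deducing (B) formally from the universal property of the Arens-Michael envelope.

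First I would construct the free unital $\ptn$-algebra on a complete locally convex space $E$. Let $T(E):=\bigoplus_{n\ge 0}E^{\ptn n}$ (with $E^{\ptn 0}=\CC$) be the locally convex direct sum of the projective tensor powers, and let $\widehat T(E)$ be its completion. The key structural input is that $\ptn$ commutes with locally convex direct sums and with completion, so that
\[
\widehat T(E)\ptn\widehat T(E)\;\cong\;\widehat{\bigoplus_{m,n\ge 0}E^{\ptn m}\ptn E^{\ptn n}}\;\cong\;\widehat{\bigoplus_{m,n\ge 0}E^{\ptn(m+n)}}.
\]
Concatenation sends the $(m,n)$-summand identically onto the $(m+n)$-summand, and these maps assemble into a continuous linear map into $\widehat T(E)$; hence $\widehat T(E)$ is a unital $\ptn$-algebra. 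Its universal property is then immediate: a continuous linear map $f\colon E\to C$ into a unital $\ptn$-algebra induces continuous maps $E^{\ptn n}\to C$ (the $n$-fold tensor power of $f$ followed by iterated multiplication), which sum to a continuous map on $T(E)$ and extend, since $C$ is complete, to a unique continuous unital homomorphism $\widehat T(E)\to C$ restricting to $f$.

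Next, for (A), I would take $E=A\oplus B$ with its canonical continuous inclusions $\iota_A,\iota_B$, and let $I\subset\widehat T(A\oplus B)$ be the two-sided ideal generated by $\iota_A(a)\iota_A(a')-\iota_A(aa')$ and $\iota_B(b)\iota_B(b')-\iota_B(bb')$ (for $a,a'\in A$, $b,b'\in B$) together with $\iota_A(1_A)-1$ and $\iota_B(1_B)-1$. Define $A\mathbin{\ast}B$ to be the completion of $\widehat T(A\oplus B)/\overline I$; this is a unital $\ptn$-algebra, since the quotient of a $\ptn$-algebra by a closed two-sided ideal carries a continuous multiplication that extends to the completion. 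To check the coproduct property, given continuous unital homomorphisms $\phi_A\colon A\to C$ and $\phi_B\colon B\to C$, the linear map $(a,b)\mapsto\phi_A(a)+\phi_B(b)$ extends to a continuous unital homomorphism $\widehat T(A\oplus B)\to C$ by the universal property above; because $\phi_A,\phi_B$ are unital and multiplicative it annihilates the generators of $I$, hence $\overline I$, and so factors through $A\mathbin{\ast}B$. Uniqueness holds because the images of $A$ and $B$ generate a dense subalgebra. For (B) no new construction is needed: the defining universal property of the Arens-Michael envelope says exactly that $A\mapsto\wh A$ is left adjoint to the inclusion of unital Arens-Michael algebras into unital $\ptn$-algebras, i.e.\ the former is a \emph{reflective} subcategory. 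A reflective subcategory of a category with finite coproducts again has finite coproducts, computed by applying the reflector to the ambient coproduct; thus $A\mathbin{\widehat\ast}B:=\widehat{A\mathbin{\ast}B}$ is a coproduct in the Arens-Michael category, as one sees by factoring the map $\psi\colon A\mathbin{\ast}B\to C$ produced by (A) uniquely through $\widehat{A\mathbin{\ast}B}$ whenever $C$ is Arens-Michael.

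The only genuine difficulty lies in part (A), and it is analytic rather than algebraic: one must know that $\ptn$ commutes with locally convex direct sums and with completion, so that multiplication on $\widehat T(E)$ is continuous, and that passage to a quotient by a closed two-sided ideal followed by completion keeps one inside the category of $\ptn$-algebras. Once these standard properties of the projective tensor product are in place, the rest is the routine verification of universal properties, and part (B) is purely formal, being the general fact that a reflective subcategory inherits coproducts.
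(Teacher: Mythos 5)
Your proof is correct, and for part (A) it is essentially the paper's own argument: the paper likewise realises $A\mathbin{\ast}B$ as the completion of the quotient of the tensor algebra $T(A\oplus B)$ (direct sum with the inductive topology) by the closed two-sided ideal generated by exactly the relations $a\otimes a'-aa'$, $b\otimes b'-bb'$, $1-1_A$, $1-1_B$; you merely spell out the analytic inputs (distributivity of $\ptn$ over locally convex direct sums, stability under quotients by closed ideals and under completion) and the verification of the universal property, which the paper leaves implicit. Where you genuinely diverge is part (B): the paper does not prove it at all but cites Pirkovskii \cite[Proposition~4.2]{Pi15}, whereas you deduce it formally from (A) via the observation that the Arens--Michael envelope is a reflector, so the reflective subcategory of unital Arens--Michael algebras inherits finite coproducts, computed as $A\mathbin{\widehat\ast}B=\widehat{A\mathbin{\ast}B}$. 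This is sound (the hom-set computation $\Hom(\widehat{A\ast B},C)\cong\Hom(A,C)\times\Hom(B,C)$ for Arens--Michael $C$ uses only the adjunction and full faithfulness of the inclusion), and it has the pleasant side effect of absorbing the paper's Proposition~\ref{frpris} into the same formal argument: the paper proves that $(A\mathbin{\ast}B)\sphat\,\cong\wh A\mathbin{\wh\ast}\wh B$ separately, invoking preservation of colimits by left adjoints, while in your setup this isomorphism is true by construction. What the paper's route buys instead is an intrinsic description of $A\mathbin{\widehat\ast}B$ within the Arens--Michael category (Pirkovskii's construction, with its finer topology on the tensor algebra), which your purely formal definition does not by itself provide; but as a proof of the bare existence statement, your version is complete and arguably cleaner.
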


Following tradition (for categories of rings or algebras) we say \emph{free
product} instead of `coproduct'. For simplicity, we restrict ourselves to the case of two algebras. The free
product of unital $\mathbin{\widehat{\otimes}}$-algebras $A$ and $B$ is denoted by $A\mathbin{\ast}B$; the free
product of unital Arens--Michael algebras $A$ and $B$ is denoted by $A\mathbin{\widehat\ast}B$.

The proof of Part~(B) can be found in  \cite[Proposition~4.2]{Pi15}.

\begin{proof}[Proof of Part~\emph{(A)}]
 Let  $A$ and $B$ be  unital $\mathbin{\widehat{\otimes}}$-algebras.
The explicit construction of $A\mathbin{\ast} B$ is as follows. For a given
complete locally convex space~$E$, consider the unital tensor $\mathbin{\widehat{\otimes}}$-algebra
$$
T(E)\!:=\mathbb{C}\oplus E\oplus\cdots  E^{\mathbin{\widehat{\otimes}} n}\oplus\cdots
$$
with obvious multiplication. (Recall that the direct sum in the category of complete locally convex spaces is the direct sum of linear spaces with the inductive topology.) Then $A\mathbin{\ast} B$ can be represented as
the completion of the quotient of $T(A\oplus B)$ by the closed two-sided ideal  generated  by all elements of the form $a\otimes a' - aa'$, $b\otimes b' - bb'$,  $1 - 1_A$, $1 - 1_B$, where    $a,a'\in A$, $b,b'\in B$ and $1_A$, $1_B$ are identities in $A$ and $B$, respectively.
\end{proof}

Remark that the explicit construction of $A\mathbin{\widehat\ast} B$ is similar, but the
topology on the corresponding tensor algebra is more complicated (see, for
example, \cite{Pi15}). A construction in the non-unital category is contained in a paper of Cuntz~\cite{Cu97}. Note that the categories in Proposition~\ref{fincopr} also  have arbitrary coproducts but we do not need this fact here (see \cite[Proposition~4.2]{Pi15} for the Arens--Michael case).

\begin{pr}\label{frpris}
\emph{(cf. \cite[Proposition 6.4]{Pir_stbflat} for tensor products)} For unital
$\mathbin{\widehat{\otimes}}$-algebras $A$ and $B$, the natural homomorphism
$$
(A\mathbin{\ast}B)\sphat\,\to\widehat A\mathbin{\widehat\ast} \widehat B\,
$$
is a topological isomorphism.
\end{pr}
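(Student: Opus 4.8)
The plan is to prove that the natural homomorphism $(A\mathbin{\ast}B)\sphat\,\to\wh A\mathbin{\wh\ast} \wh B$ is a topological isomorphism by exhibiting it as a map between objects that both satisfy the \emph{same} universal property, so that it is forced to be an isomorphism by abstract nonsense (Yoneda). Concretely, I would first construct the canonical homomorphism: the universal arrows $A\to \wh A$, $B\to\wh B$ compose with the coproduct inclusions $\wh A\to \wh A\mathbin{\wh\ast}\wh B$ to give homomorphisms $A\to \wh A\mathbin{\wh\ast}\wh B$ and $B\to \wh A\mathbin{\wh\ast}\wh B$ into an Arens-Michael algebra; by the universal property of the $\ptn$-algebra free product these assemble into $A\mathbin{\ast}B\to \wh A\mathbin{\wh\ast}\wh B$, and since the target is Arens-Michael, this factors uniquely through the Arens-Michael envelope $(A\mathbin{\ast}B)\sphat$, yielding the claimed map.

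Next I would verify that $(A\mathbin{\ast}B)\sphat$ itself is a coproduct of $\wh A$ and $\wh B$ \emph{in the category of Arens-Michael algebras}. For this I would check the universal property directly. Given any Arens-Michael algebra $C$ together with continuous homomorphisms $f\!:\wh A\to C$ and $g\!:\wh B\to C$, precompose with $\io_A$ and $\io_B$ to obtain $A\to C$ and $B\to C$; by the $\ptn$-free-product universal property these give a unique $A\mathbin{\ast}B\to C$, and, $C$ being Arens-Michael, this extends uniquely to $(A\mathbin{\ast}B)\sphat\to C$. The one thing needing care is that the resulting map restricts to the \emph{original} $f$ and $g$ on $\wh A$ and $\wh B$ (not merely agreeing after precomposition with $\io_A,\io_B$); this follows because the images $\io_A(A)$ and $\io_B(B)$ are dense in $\wh A$ and $\wh B$ and all maps in sight are continuous, so agreement on the dense images forces agreement everywhere. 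Uniqueness is argued the same way. Hence $(A\mathbin{\ast}B)\sphat$ satisfies the universal property defining $\wh A\mathbin{\wh\ast}\wh B$, and the canonical map is an isomorphism of Arens-Michael algebras.

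Finally I would note that an isomorphism in the Arens-Michael category is automatically a topological isomorphism, since both objects are complete and the inverse supplied by the universal property is continuous; so no separate open-mapping argument is needed. The construction is entirely parallel to the tensor-product statement \cite[Proposition~6.4]{Pir_stbflat} cited in the proposition, and the proof there can serve as a template.

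I expect the main obstacle to be purely the bookkeeping around density and continuity in the universal-property check: one must be careful that the extension of a homomorphism from $A\mathbin{\ast}B$ to its envelope, when restricted to the canonical copies of $\wh A$ and $\wh B$ inside $(A\mathbin{\ast}B)\sphat$, genuinely recovers the prescribed $f$ and $g$. Once it is established that $\io_A(A)$ is dense in $\wh A$ (a basic property of the envelope, since $\wh A$ is by definition the completion of $A$ in the submultiplicative topology) and likewise for $B$, the matching is immediate, but this density-plus-continuity step is where all the genuine content sits; everything else is formal manipulation of universal properties.
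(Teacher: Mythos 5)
Your proof is correct, and it is essentially the paper's argument unpacked by hand: the paper disposes of the proposition in two sentences, observing that the Arens--Michael enveloping functor is left adjoint to the forgetful functor from unital Arens--Michael algebras to unital $\ptn$-algebras and then citing the general fact that left adjoints preserve colimits, in particular coproducts \cite[Proposition~16.4.5]{Shu}. Your direct verification that $(A\mathbin{\ast}B)\sphat\,$ satisfies the universal property of the coproduct of $\wh A$ and $\wh B$ in the Arens--Michael category is precisely the proof of that categorical fact specialized to this situation, so nothing is missing; what the paper's formulation buys is brevity and immediate generality (arbitrary coproducts, indeed arbitrary colimits), while yours makes explicit where continuity and completeness actually enter. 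One streamlining worth noting: the step you identify as carrying ``all the genuine content'' --- that the extension $(A\mathbin{\ast}B)\sphat\,\to C$ restricted to the canonical copy of $\wh A$ recovers the prescribed $f$ --- does not really require a density argument, since the two continuous homomorphisms $\wh A\to C$ in question agree after precomposition with $\io_A$, and hence coincide by the \emph{uniqueness} clause in the defining universal property of the Arens--Michael envelope (density of $\io_A(A)$ in $\wh A$ is how that uniqueness is proved, but you may simply quote it); the same remark applies to your uniqueness check, where one uses that the images of $A$ and $B$ generate a dense subalgebra of $A\mathbin{\ast}B$. Your closing observation that the universal-property isomorphism is automatically topological, the inverse being itself a continuous homomorphism supplied by the same universal property, matches the adjunction argument and is correct.
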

\begin{proof}
The Arens--Michael enveloping functor is left adjoint to the forgetful
functor from the category of unital Arens--Michael algebras to the category
of unital $\mathbin{\widehat{\otimes}}$-algebras. So we can apply the general fact that a left
adjoint functor preserves colimits and, in particular, coproducts
\cite[Proposition~16.4.5]{Shu}.
\end{proof}

\begin{pr}\label{ga1APt}
Let $A$ and $B$ be unital $\mathbin{\widehat{\otimes}}$-algebras that are  nuclear (DF)-spaces. Then so is $A\mathbin{\ast}B$.
\end{pr}
For the proof we need the following lemma.
\begin{lm}\label{stnDF}
The class of complete nuclear (DF)-spaces is stable under finite complete projective tensor
products and countable direct sums.
\end{lm}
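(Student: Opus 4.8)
The plan is to observe that being a \emph{complete nuclear (DF)-space} is the conjunction of three separate properties --- completeness, nuclearity, and the (DF)-property --- and to check that each of them is preserved independently under finite completed projective tensor products and under countable direct sums. All three are treated in the standard literature, so the proof amounts to assembling the correct citations, with a single point requiring genuine care.

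First I would dispose of nuclearity. The completed projective tensor product of two nuclear spaces is nuclear \cite[Proposition~50.1]{Tre}, and a countable direct sum of nuclear spaces is again nuclear \cite[\S 21]{Kot2}; the latter is the direct-sum counterpart of the stability under countable products already used in the proof of Lemma~\ref{Ptbam}. This gives nuclearity of $A\ptn B$ and of $\bigoplus_n A_n$.

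Next comes the (DF)-property, which is the delicate part and the step I expect to be the main obstacle. For direct sums it is essentially elementary from the definition, via the fundamental sequences of bounded sets: a countable direct sum of (DF)-spaces is a (DF)-space \cite[\S 29.3]{Kot1}. For the tensor product, however, the property is \emph{not} inherited as automatically as nuclearity, and one must appeal to Grothendieck's structure theory of (F)- and (DF)-spaces: the projective tensor product $A\otimes_\pi B$ of two (DF)-spaces is a (DF)-space, and the completion of a (DF)-space is again a (DF)-space, so that $A\ptn B$ is a (DF)-space \cite[\S 41]{Kot2}. I would isolate this as the crux of the argument.

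Finally, completeness is immediate: the completed projective tensor product is complete by construction, and a countable direct sum of complete locally convex spaces is complete \cite[\S 18]{Kot1}. Combining the three strands for each of the two operations yields the claim. As a cleaner alternative for the direct-sum case one could argue by duality: each complete nuclear (DF)-space is the strong dual of a nuclear Fr\'echet space $F_n$, and since $\bigoplus_n F_n'\cong(\prod_n F_n)'$ \cite[\S 22.5]{Kot1} with $\prod_n F_n$ nuclear Fr\'echet, the direct sum is exhibited as the strong dual of a nuclear Fr\'echet space, hence a complete nuclear (DF)-space; I would nonetheless keep the tensor-product argument in the form above, the corresponding dual identity being less transparent.
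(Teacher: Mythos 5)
Your proof is correct and follows essentially the same route as the paper, which simply observes that the assertion ``follows from standard properties of (DF)-spaces and nuclear spaces'' and cites Jarchow (Theorem~12.4.8 for the (DF)-property under completions, quotients and direct sums, Theorem~15.6.2 for projective tensor products of (DF)-spaces, and Corollary~21.2.3 for nuclearity). You assemble the same standard facts from Treves and K\"{o}the instead, correctly identifying Grothendieck's tensor-product theorem for (DF)-spaces as the only non-formal ingredient; your alternative duality argument for the direct sum (realizing it as the strong dual of a countable product of nuclear Fr\'echet spaces) is a valid bonus but not a genuinely different proof of the lemma.
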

\begin{proof}
The assertion follows from standard properties of
(DF)-spaces and nuclear spaces; see \cite[p.~260, Theorem~12.4.8, p.~335, Theorem~15.6.2, and p.~483, Corollary~21.2.3]{Jar}.
\end{proof}

\begin{proof}[Proof of Proposition~\ref{ga1APt}]
By  construction, $A\mathbin{\ast}B$ is a completion of a quotient of the tensor algebra $T(A\oplus B)$ by a closed subspace. Since  $A$ and $B$   are complete  nuclear (DF)-spaces, so is $T(A\oplus B)$  by
Lemma~\ref{stnDF}. The class of (DF)-spaces is stable under completions and
quotients by a closed subspace \cite[p.~260, Theorem~12.4.8]{Jar}. Similarly, the stability property holds for the class of nuclear spaces [ibid. p.~483, Corollary~21.2.3]. Thus $A\mathbin{\ast}B$ is a nuclear (DF)-space.
\end{proof}

\end{document}